\newcommand\reallywidehat[1]{%
\savestack{\tmpbox}{\stretchto{%
  \scaleto{%
    \scalerel*[\widthof{\ensuremath{#1}}]{\kern-.6pt\bigwedge\kern-.6pt}%
    {\rule[-\textheight/2]{1ex}{\textheight}}%WIDTH-LIMITED BIG WEDGE
  }{\textheight}% 
}{0.5ex}}%
\stackon[1pt]{#1}{\tmpbox}%
}
\definecolor{myred}{rgb}{0.75,0,0}
\definecolor{mygreen}{rgb}{0,0.5,0}
\definecolor{myblue}{rgb}{0,0,0.65}
\theoremstyle{plain}
\newtheorem{theorem}[subsection]{Theorem}
\newtheorem{slogan}[subsection]{Slogan}
\newtheorem{proposition}[subsection]{Proposition}
\newtheorem{lemma}[subsection]{Lemma}
\newtheorem{corollary}[subsection]{Corollary}
\theoremstyle{definition}
\newtheorem{definition}[subsection]{Definition}
\newtheorem{remark}[subsection]{Remark}
\newtheorem{construction}[subsection]{Construction}
\newtheorem{example}[subsection]{Example}
\newtheorem{question}[subsection]{Question}
\newtheorem{conjecture}[subsection]{Conjecture}
\theoremstyle{remark}
\newtheorem{notation}[subsection]{Notation}
\numberwithin{equation}{section}
\newcommand\nc{\newcommand}
\nc\on{\operatorname}
\nc\renc{\renewcommand}
\newcommand*{\shom}{\mathscr{H}\kern -.5pt om}
\newcommand*{\stor}{\mathscr{T}\kern -.5pt or}
\newcommand*{\sext}{\mathscr{E}\kern -.5pt xt}
\providecommand\@dotsep{5}
\renewcommand{\listoftodos}[1][\@todonotes@todolistname]{%
\@starttoc{tdo}{#1}}
\newcommand{\customlabel}[2]{\protected@write \@auxout {}{\string \newlabel {#1}{{#2}{\thepage}{#2}{#1}{}} }\hypertarget{#1}{#2}}
\DeclareMathOperator\id{id}
\DeclareMathOperator\rk{rk}
\renewcommand\sp{\mathrm{Sp}}
\DeclareMathOperator\gl{GL}
\renewcommand\sl{\mathrm{SL}}
\DeclareMathOperator\so{SO}
\renewcommand\o{{\rm{O}}}
\newcommand\nbound{\frac{3r^2}{\sqrt{g+1}}+8r}
\DeclareFontFamily{U}{wncy}{}
\DeclareFontShape{U}{wncy}{m}{n}{<->wncyr10}{}
\DeclareSymbolFont{mcy}{U}{wncy}{m}{n}
\DeclareMathSymbol{\Sha}{\mathord}{mcy}{"58}
\def\listtodoname{List of Todos}
\def\listoftodos{\@starttoc{tdo}\listtodoname}
\title{Big monodromy for higher Prym representations}
\subjclass[2020]{
Primary
14H30;
Secondary 
57K20,
14D07,
14H10,
14H40,
14H60
}
\keywords{Monodromy, Prym representations, curves, mapping class groups, Hodge
theory}
\author{Aaron Landesman, Daniel Litt, and Will Sawin}
\begin{document}

\begin{abstract} 
Let $\Sigma_{g'}\to \Sigma_g$ be a cover of an orientable surface of genus $g$
by an orientable surface of genus $g'$, branched at $n$ points, with Galois group $H$. 
Such a cover induces a virtual action of the mapping class group $\on{Mod}_{g,n+1}$ of a genus $g$ surface with $n+1$ marked points on $H^1(\Sigma_{g'}, \mathbb{C})$. 
When $g$ is large in terms of the group $H$,
we calculate precisely the connected monodromy group of this action.
The methods are Hodge-theoretic 
and rely on a ``generic Torelli theorem with coefficients.''
%which may be of independent interest.
%We consider the action on the cohomology of a finite cover of a Riemann surface by a suitable mapping class group---if the base surface has genus $g$ and there are $n$ branch points, this is a finite-index subgroup of the mapping class group of a genus $g$ surface with $n$ branched points. We calculate precisely 
 %the connected monodromy group of this action, that is, the Zariski closure of the image of the mapping class group inside the general linear group, as long as $g$ is sufficiently large with respect to $n$ and the degree. The methods are Hodge-theoretic and build on prior work of two of the authors.
\end{abstract}

\maketitle

\setcounter{tocdepth}{1}
\tableofcontents

\section{Introduction}

A classical result in geometric topology
states that the
action of the mapping class group $\on{Mod}_g$ of a surface $\Sigma_g$ of genus $g$ on its
first cohomology, $H^1(\Sigma_g, \mathbb{Z})$, is via the full group of
automorphisms preserving the cup product, namely $\on{Sp}_{2g}(\mathbb{Z})$. 
Let the hyperelliptic mapping class group denote the subgroup of mapping classes 
commuting with an involution having genus zero quotient.
If
one restricts to the hyperelliptic mapping class group the image is still of finite index in $\on{Sp}_{2g}(\mathbb{Z})$
\cite{a1979tresses}. Finally, one may consider the monodromy representation on
the cohomology of Prym varieties arising from connected \'etale double covers of genus $g$
curves. It follows from \cite{looijenga:prym-representations} that the image of this representation is finite index in $\on{Sp}_{2g-2}(\mathbb{Z})$. %$\sp_{2g}(\mathbb Z)$

%A classical result in geometric topology states that the action of the mapping
%class group on the first cohomology of a genus $g$ surface, $H^1(\Sigma_g,
%\mathbb Z)$, is via the full group of automorphisms preserving the cup product,
%$\sp_{2g}(\mathbb Z)$. The algebraic-geometric analog of this says that the
%monodromy group of the first cohomology of the universal family of curves over the moduli space $\mathscr{M}_g$ of genus $g$ curves is
%is $\sp_{2g}(\mathbb Z)$.
%A related problem is that of the monodromy action of the
%hyperelliptic mapping class group on the cohomology of a hyperelliptic genus $g$ surface. Again,
%in this case, the monodromy is the full symplectic group. Algebro-geometrically,
%this says the monodromy remains large when one replaces the full moduli space of
%curves with the much smaller moduli space of hyperelliptic curves.

In order to generalize the above cases, fix an \emph{arbitrary} finite group $H$ and consider
a maximal family of (possibly branched) Galois $H$-covers of curves of genus $g$. What is the monodromy representation on the 
first cohomology of these covering curves? The three cases considered above
correspond to the very special cases $H=\{\id\}$, the case $g=0$ and
$H=\mathbb{Z}/2\mathbb{Z}$, and the case where $g\geq 2$, $H=\mathbb{Z}/2\mathbb{Z}$, and the covers are unramified. 
%The full moduli space of curves
%corresponds to the case $H = \id$ while the moduli space of hyperelliptic curves
%corresponds to the case $H = \mathbb Z/2 \mathbb Z$.
Our main result asserts that once the genus $g$ of the base curve is sufficiently
large, the connected monodromy group of this family of $H$-covers is as large as
possible.
Namely, just as the action of the mapping class group on $H^1(\Sigma_g, \mathbb{Z})$ cannot be via all of
$\gl_{2g}(\mathbb Z)$ because it must preserve the cup product, a symplectic form, the monodromy
of families of $H$-covers cannot be the full general linear group, but must
preserve the symplectic form and respect the $H$-action. Moreover, as local
systems of geometric origin are semisimple, it must be
semisimple. 
These considerations show that the identity component of the Zariski
closure of the image of the monodromy representation is
contained in the derived subgroup of the centralizer of $H$ in the symplectic
group.
We will show that it is in fact equal to this group, once the genus of the base
curve is
sufficiently large.

\subsection{Statement of results}
We next set up notation to state our main results more precisely.
Let $\Sigma_{g,n}$ be an orientable surface of genus $g$, with $n$ punctures,
and let $\on{Mod}_{g,n}$ be the pure mapping class group of $\Sigma_{g,n}$. That is, $\on{Mod}_{g,n}=\pi_0(\text{Homeo}^+(\Sigma_{g,n}))$, where $\text{Homeo}^+(\Sigma_{g,n})$
is the space of orientation preserving homeomorphisms of $\Sigma_{g}=\Sigma_{g,0}$ fixing
each of the $n$ punctures. 
The goal of this paper is to study certain natural
representations of these mapping class groups, arising from finite unramified covers of
$\Sigma_{g,n}$.
We call these representations \emph{higher Prym representations}, following the terminology of \cite{putmanW:abelian-quotients}.

Let $H$ be a finite group and let $\Sigma_{g',n'}\to \Sigma_{g,n}$ be a
finite unramified Galois
$H$-cover. 
We will next construct a homomorphism from a finite index subgroup of
$\on{Mod}_{g,n+1}$ to the centralizer of $H$ in
$\on{Sp}(H^1(\Sigma_{g'}, \mathbb{C}))$.
Fix a point $x$ of $\Sigma_{g,n}$. All finite unramified $H$-torsors
$\Sigma_{g',n'}\to \Sigma_{g,n}$ arise from surjection $\varphi:
\pi_1(\Sigma_{g,n}, x)\twoheadrightarrow H$, with isomorphism of torsors corresponding to conjugation (by $H$) of surjections. 
For any $x' \in \Sigma_{g',n'}$ mapping to $x$, we can identify the kernel of
$\varphi$ with $K:=\pi_1(\Sigma_{g',
n'},x')$. 
The mapping class group $\on{Mod}_{g,n+1}$ acts (up to isotopy) on
$(\Sigma_{g,n}, x)$, 
where we view $x$ as an $(n+1)$-st marked point of $\Sigma_{g}$.
Thus,
$\on{Mod}_{g,n+1}$ acts
on $\pi_1(\Sigma_{g,n}, x)$, and hence on the
finite set of homomorphisms $\pi_1(\Sigma_{g,n}, x)\twoheadrightarrow H$. The
stabilizer $\on{Mod}_\varphi \subset \on{Mod}_{g,n+1}$ of $\varphi$ acts on the
kernel $K$ of $\varphi$.  The induced action on $K^{\text{ab}}=H_1(\Sigma_{g',
n'}, \mathbb{Z})$ preserves the kernel of the natural morphism $$H_1(\Sigma_{g',
n'}, \mathbb{Z})\to H_1(\Sigma_{g'}, \mathbb{Z}),$$ and thus $\varphi$ gives
rise to a virtual action of $\on{Mod}_{g,n+1}$ on $H_1(\Sigma_{g'},
\mathbb{Z})$, i.e.~an action of $\on{Mod}_{\varphi}$ on $H_1(\Sigma_{g'}, \mathbb{Z})$. 
 This action manifestly commutes with the action of $H$ and thus defines  a homomorphism
 \begin{align}
	\label{equation:monodromy-map}
R_\varphi: \on{Mod}_{\varphi}\to \on{Sp}(H^1(\Sigma_{g'}, \mathbb{C}))^H,
\end{align}
% from the stabilizer of $\varphi$ in $\on{Mod}_{g,n+1}$,
where $\on{Mod}_{\varphi}$ is the stabilizer of $\varphi$ as defined above, and $\on{Sp}(H^1(\Sigma_{g'}, \mathbb{C}))^H$ denotes the centralizer of the action of $H$ in $\on{Sp}(H^1(\Sigma_{g'}, \mathbb{C}))$. We may equivalently think of this as a virtual homomorphism from $\on{Mod}_{g,n+1}$ to $\on{Sp}(H^1(\Sigma_{g'}, \mathbb{C}))^H$, where a virtual homomorphism is a homomorphism from a finite-index subgroup. 

We are interested in the \emph{connected monodromy group} of this virtual action
of $\on{Mod}_{g,n+1}$, or, in other words, the identity component of the Zariski closure of the image of $R_\varphi$ inside $\on{Sp}(H^1(\Sigma_{g'}, \mathbb{C}))^H$. The slogan for this work is:
\begin{slogan}\label{slogan:big-monodromy}
\emph{Monodromy groups should be as big as possible.}
\end{slogan}
Based on this, we expect that, outside of a few exceptional cases, this Zariski closure should contain the commutator subgroup of $\on{Sp}(H^1(\Sigma_{g'}, \mathbb{C}))^H$. (After passing to a further finite-index subgroup, we may assume the monodromy group is connected, and connected components of monodromy groups of local systems of geometric origin are semisimple, so the commutator subgroup is the largest possible.) Our main result is that this expectation holds under a lower bound on the genus:
\begin{theorem}\label{theorem:main-thm-1}
Let $H$ be a finite group and let $\Sigma_{g',n'}\to \Sigma_{g,n}$ be an
$H$-cover associated to a surjection $\varphi: \pi_1(\Sigma_{g,n},
x)\twoheadrightarrow H$, where $x$ is a base point of $\Sigma_{g,n}$. Let $\overline{r}$ be the maximal dimension of an irreducible representation of $H$. Suppose that either \begin{enumerate}
 \item $n=0$ and $g\geq
2 \overline{r}+2$, or
\item $n$ is arbitrary and $g> \on{max}(2\overline{r}+1, \overline{r}^2).$
 \end{enumerate}
   Let $\on{Mod}_\varphi$ be the stabilizer of $\varphi$ inside $\on{Mod}_{g,n+1}$.
 Then,
 using notation as in \eqref{equation:monodromy-map},
 the identity component of the Zariski closure of the image of $$R_\varphi: \on{Mod}_{\varphi}\to \on{Sp}(H^1(\Sigma_{g'}, \mathbb{C}))^H$$ is the commutator subgroup of $\on{Sp}(H^1(\Sigma_{g'}, \mathbb{C}))^H$.
\end{theorem}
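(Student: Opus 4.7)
The plan is to translate the monodromy computation into a Mumford--Tate computation via standard Hodge-theoretic machinery, and then to establish a generic Torelli theorem with coefficients in order to show the generic Mumford--Tate group is maximal. The mapping class group action is the topological monodromy of the universal family of $H$-covers over an appropriate Hurwitz space, so algebraic monodromy techniques apply. By Deligne's semisimplicity theorem and a theorem of Andr\'e, the connected algebraic monodromy of a variation of Hodge structure of geometric origin is a normal subgroup of the derived Mumford--Tate group at a very general fiber; once one shows that this derived Mumford--Tate group equals the commutator subgroup of $\on{Sp}(H^1(\Sigma_{g'},\mathbb{C}))^H$, the maximality of the monodromy follows because each simple factor of this derived centralizer must contribute nontrivially (for instance via Lefschetz-type arguments producing nontrivial mapping classes in each factor).

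Next I would decompose $H^1(\Sigma_{g'},\mathbb{C}) = \bigoplus_\rho V_\rho \otimes W_\rho$ into $H$-isotypic pieces, where $\rho$ ranges over irreducible complex representations of $H$, $V_\rho$ is the irrep, and $W_\rho$ is the multiplicity space, which carries an induced polarized Hodge structure. The cup product, together with the Frobenius--Schur indicator of $\rho$, identifies each simple factor of the commutator subgroup of $\on{Sp}(H^1(\Sigma_{g'},\mathbb{C}))^H$ as the derived subgroup of a classical group---symplectic, orthogonal, or general linear---acting on $W_\rho$ (or on $W_\rho \oplus W_{\rho^*}$ when $\rho$ is not self-dual). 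The Mumford--Tate group respects this decomposition, so it is enough to show its projection onto each simple factor is full and that it is not contained in any proper diagonal subgroup linking distinct isotypic components.

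The key input would be a generic Torelli theorem with coefficients: for $g$ above the stated threshold in terms of $\overline r$, the $H$-equivariant polarized Hodge structure on $H^1(\Sigma_{g'},\mathbb{C})$ of a very general Galois $H$-cover $\Sigma_{g'}\to \Sigma_{g,n}$ should determine the cover up to finite ambiguity. I would prove this via an equivariant infinitesimal Torelli statement on each isotypic component, reducing to a Clifford-type bound on the cup-product pairing between isotypic pieces of $H^0(\Sigma_{g'},\omega_{\Sigma_{g'}})$ and $H$-equivariant deformations of the cover; the genus hypotheses $g \geq 2\overline r + 2$ and $g > \max(2\overline r + 1, \overline r^2)$ are precisely the thresholds that make these Clifford bounds hold uniformly across all $\rho$. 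Generic finiteness of the period map, combined with a dimension comparison showing that the image cannot fit inside the period subdomain for any proper reductive subgroup $G' \subsetneq \on{Sp}(H^1(\Sigma_{g'},\mathbb{C}))^H$, then forces the Mumford--Tate group to be as large as possible.

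The main obstacle is establishing the generic Torelli theorem with coefficients---especially the global step ruling out unexpected $H$-equivariant endomorphisms of the generic Hodge structure and hidden links between distinct isotypic components. This amounts to a delicate equivariant refinement of the classical generic Torelli theorem for curves, and is where the quantitative genus hypotheses enter in an essential way.
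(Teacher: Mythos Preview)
Your overall architecture---isotypic decomposition, a generic Torelli theorem with coefficients, and Andr\'e's theorem linking monodromy to Mumford--Tate---matches the paper's, and you correctly identify the generic Torelli step as the technical heart. But two steps in your plan are genuine gaps rather than alternative routes.

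First, the argument ``generic finiteness of the period map plus a dimension comparison rules out any proper reductive $G'\subsetneq \on{Sp}^H$'' does not go through as stated. Proper Mumford--Tate subgroups can have period subdomains of the same dimension as the ambient one, so a raw dimension count is insufficient. What the paper actually does is much more structured: once the generic Torelli input shows the infinitesimal variation of Hodge structure on each isotypic piece $W^1H^1(\Sigma_{g,n},\rho)$ is indecomposable, a Tannakian lemma forces the connected monodromy on that piece to be a \emph{simple} group acting irreducibly. One then invokes the Deligne--Zarhin classification of simple factors of Mumford--Tate groups of weight-one Hodge structures, which constrains the group to be $\on{SL}_\nu$, $\on{Sp}_{2\nu}$, or $\on{SO}_\nu$ acting through a minuscule representation. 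The exotic minuscule cases (wedge powers of the standard representation, spin and half-spin representations) are then eliminated not by dimension but by computing the \emph{rank} of the derivative of the period map along Schiffer variations and comparing it to Riemann--Roch lower bounds on the Hodge numbers. Your Clifford-type bounds feed into the Torelli step but not into this exclusion of non-standard representations, which is a separate and essential argument.

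Second, your passage from ``Mumford--Tate group is maximal'' to ``monodromy is maximal'' via ``Lefschetz-type arguments producing nontrivial mapping classes in each factor'' is underspecified; producing even a single infinite-order element in each isotypic summand is already the content of the Putman--Wieland conjecture for that piece, which is not elementary. The paper sidesteps this by never computing the Mumford--Tate group first: it works directly with the monodromy group throughout, using the IVHS only as a constraint. Finally, to rule out diagonal subgroups linking distinct isotypic components, the paper applies the Goursat--Kolchin--Ribet criterion of Katz, whose hypotheses (that an isomorphism of the local systems $W^1R^1\pi^\circ_*\mathbb{V}_{\rho_i}\simeq W^1R^1\pi^\circ_*\mathbb{V}_{\rho_j}$ forces $\rho_i\simeq\rho_j$) are again verified by the generic Torelli theorem with coefficients. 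You allude to this step but should make the Goursat mechanism explicit.
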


We prove \autoref{theorem:main-thm-1} in \autoref{subsection:main-proofs}.
We refer to representations as in \autoref{theorem:main-thm-1} as {\em
higher Prym representations}.

\begin{remark}
	\label{remark:}
	Note that the constant $\overline{r}$ reflects the group-theoretic properties of $H$. 
For example, $\overline{r}\leq \sqrt{\# H}$, and $\overline{r}$ divides the index of any normal abelian subgroup of $H$.
	The case that the covering group $H$ is abelian was considered in
\cite{looijenga:prym-representations}. In this case, the representation was
called a Prym representation.
Note that when $H$ is abelian, $\overline{r} = 1$, and so it suffices to take $g
\geq 4$ in the statement of \autoref{theorem:main-thm-1}.
If $H$ is dihedral, then $\overline{r}=2$.
\end{remark}

%It is possible to define homomorphisms from some slightly different groups into
%$\on{Sp}(H^1(\Sigma_{g'}, \mathbb{C}))^H$, for example by handling base points
%in a different way. The images of these groups share a common
%finite-index subgroup and thus are virtually the same. In particular, the
%identity components of their Zariski closures are identical, so the statement
%\autoref{theorem:main-thm-1} applies to them as well. 
%Here is a more precise
%version of this discussion:
\begin{remark}\label{rem:virtual-image}
	One may reformulate \autoref{theorem:main-thm-1} as follows: the Zariski-closure of the virtual image of $\on{Mod}_{g, n+1}$ under $R_\varphi$ in $\on{Sp}(H^1(\Sigma_{g'}, \mathbb{C}))^H$ is the commutator subgroup of $\on{Sp}(H^1(\Sigma_{g'}, \mathbb{C}))^H$. Here the \emph{Zariski-closure of the virtual image} is the intersection of the Zariski-closures of the images of all finite-index subgroups on which $R_\varphi$ is defined.
\end{remark}

\begin{remark}
	\label{remark:putman-wieland}
	The Putman-Wieland conjecture \cite{putmanW:abelian-quotients} is
	heavily influenced by \autoref{slogan:big-monodromy}. It predicts that if
	$g>2$, the virtual action of $\on{Mod}_{g,n+1}$ on $H^1(\Sigma_{g'},
	\mathbb C)$ has no nonzero finite orbits. (In
		\cite{putmanW:abelian-quotients}, this conjecture was made for
		all $g
		\geq 2$. However, Markovi\'{c} \cite[Theorem 1.3]{markovic:unramified-correspondences}
		gave a counterexample when $g = 2$.)
		Note that the virtual action of $\on{Mod}_{g,n+1}$ on $H^1(\Sigma_{g'},
	\mathbb C)$ has no nonzero finite orbits if and only if the Zariski
	closure of $\on{Mod}_{g,n+1}$ in $\on{Aut}(H^1(\Sigma_{g'},
	\mathbb C))$ has no nonzero finite orbits.
	Also note that the commutator subgroup of $\on{Sp}(H^1(\Sigma_{g'}, \mathbb{C}))^H$ has
	no nonzero finite orbits on 
	$H^1(\Sigma_{g'}, \mathbb C)$ using
	\eqref{equation:symplectic-centralizer}.
	Therefore, the Putman-Wieland conjecture for a given covering
	$\Sigma_{g',n'} \to \Sigma_{g,n}$ is implied
	by the statement that the Zariski closure of 
	$\on{Mod}_{g,n+1}$ in $\on{Aut}(H^1(\Sigma_{g'},
	\mathbb C))$ is the commutator subgroup of $\on{Sp}(H^1(\Sigma_{g'}, \mathbb{C}))^H$.

In particular, since the analogue of the Putman-Wieland conjecture does not hold
for $g=2$ due to 
Markovi\'{c}'s counterexample
\cite[Theorem 1.3]{markovic:unramified-correspondences}, as mentioned above,
it follows that no analogue of \autoref{theorem:main-thm-1} can hold for $g=2$.
That is, some lower bound on the genus is necessary. 
We prove \autoref{theorem:main-thm-1} when the genus $g$ is bounded below by a
function depending on the
maximal dimension of any irreducible $H$-representation. 
It remains possible,
however, that the
conclusion of 
\autoref{theorem:main-thm-1}
holds whenever $g>2$, independent of $H$.
As explained in the previous paragraph, if the conclusion of \autoref{theorem:main-thm-1} were to
hold whenever $g > 2$, one would obtain the Putman-Wieland conjecture as a
consequence.

It is also unsurprising that the proof of \autoref{theorem:main-thm-1} builds
on the techniques used by the first two authors to prove the
Putman-Wieland conjecture for $g$ sufficiently large in terms of $H$
\cite[Theorem 7.2.1]{landesmanL:canonical-representations}.
\end{remark}

\begin{remark}
	\label{remark:}
	It is reasonable to hope that an even stronger statement, describing the exact image of a finite-index subgroup of $\on{Mod}_{g,n+1}$ and not its Zariski closure, might be true, but a proof of this would require additional ideas. In particular, it is natural to ask if the image is an arithmetic subgroup of its Zariski closure.
\end{remark}

%\{do we know examples of families in low genus which are not
%counterexamples to putman-wieland but still give counterexamples to this big
%monodromy statement?\\ \will{I don't!} }

In proving \autoref{theorem:main-thm-1}, it is natural to decompose $H^1(\Sigma_{g'}, \mathbb{C})$ into isotypic components corresponding to different irreducible representations of $H$. We introduce some notation to describe these:

Let $G$ be a group and $\rho: G\to \on{GL}_r(\mathbb{C})$ an irreducible representation of $G$. If $\rho$ is self-dual, then by Schur's lemma, $(\rho\otimes \rho)^G$ is one-dimensional. As $\rho\otimes \rho=\on{Sym}^2(\rho)\oplus \wedge^2\rho$, exactly one of $(\on{Sym}^2(\rho))^G, (\wedge^2\rho)^G$ is non-zero.
\begin{definition}
	Let $\rho$ be an irreducible self-dual finite-dimensional representation of a group $G$. If $(\on{Sym}^2(\rho))^G\neq 0$, we say $\rho$ is \emph{orthogonally self-dual}. If $(\wedge^2\rho)^G\neq 0$ we say $\rho$ is \emph{symplectically self-dual}.
\end{definition}
If $\mathbb V^\rho$ is a unitary local system on $\Sigma_{g,n}$ corresponding to a
representation $\rho$ of $\pi_1(\Sigma_{g,n},x)$, we define the
\emph{weight one} piece  $W_1H^1(\Sigma_{g,n}, \mathbb V^\rho)\subset H^1(\Sigma_{g,n},
\mathbb V^\rho)$ to be $$W_1H^1(\Sigma_{g,n}, \mathbb V^\rho):=H^1(\Sigma_g,
j_*\mathbb V^\rho),$$ where $j:
\Sigma_{g,n}\hookrightarrow \Sigma_g$ is the natural inclusion.
Note that this
%riga
agrees with the usual notion of weights in algebraic geometry. The groups
$W_1H^1(\Sigma_{g,n}, \mathbb V^\rho)$ and $W_1H^1(\Sigma_{g,n},
\mathbb V^{\rho^\vee})$ are naturally
dual. In particular, if $\rho$ is self-dual, $W_1H^1(\Sigma_{g,n}, \mathbb
V^\rho)$ carries a natural perfect pairing with itself. By the
graded-commutativity of the cup product, $W_1H^1(\Sigma_{g,n}, \mathbb V^\rho)$ is symplectically self-dual if $\rho$ is orthogonally self-dual, and orthogonally self-dual if $\rho$ is symplectically self-dual.
%The explicit description of the symplectic centralizer given in
%\cite[Theorem 3.1.10]{jain:big-mod-ell-monodromy} (which seems to implicitly
%work over finite fields, but the same proof works over the complex
%numbers)\todo{be less precise here, find better citation.}
%shows that

If $\rho$ factors through a surjection $\varphi: \pi_1(\Sigma_{g,n},
x)\twoheadrightarrow H$, with $H$ a finite group, there is a natural isomorphism
$\rho^\sigma\overset{\sim}{\to}\rho$ for each $\sigma\in \on{Mod}_\varphi$, the
stabilizer of $\varphi$ in $\on{Mod}_{g,n+1}$. Hence $\on{Mod}_\varphi$ acts
naturally on $W_1H^1(\Sigma_{g,n}, \mathbb V^\rho)$, via a homomorphism we name $R_{\varphi, \rho}$. 
A large part of the proof of \autoref{theorem:main-thm-1} consists of checking:
\begin{theorem}\label{theorem:main-thm-2}
Let $H$ be a finite group and let $\varphi: \pi_1(\Sigma_{g,n}, x)\twoheadrightarrow H$ be a surjective homomorphism, where $x$ is a base point of $\Sigma_{g,n}$. Suppose 
$\rho: H\to \on{GL}_r(\mathbb{C})$ is an irreducible
representation of dimension $r$, with $r$ satisfying either
\begin{enumerate}
 \item $n=0$ and $g\geq
2r+2$, or
\item $n$ is arbitrary and $g> \max(2r+1, r^2).$
 \end{enumerate} Let $\mathbb V^\rho$ denote the local system on
 $\Sigma_{g,n}$ associated to $\varphi \circ \rho$. Let $\on{Mod}_\varphi$ be the stabilizer of $\varphi$ inside
 $\on{Mod}_{g,n+1}$ and $$R_{\varphi,\rho} \colon \on{Mod}_\varphi\to
 \on{GL}(W_1H^1(\Sigma_{g,n}, \mathbb V^\rho))$$ the natural homomorphism.
%where we view $\mathbb V^\rho$ as a local system on $\Sigma_{g,n}$ by composing it with $\varphi$. 
 
 Then the image of $R_{\varphi,\rho}$ is Zariski-dense in 
\begin{enumerate}
\item $\on{SO}(W_1H^1(\Sigma_{g,n}, \mathbb V^\rho))$ if $\rho$ is symplectically self-dual,
\item $\on{Sp}(W_1H^1(\Sigma_{g,n}, \mathbb V^\rho))$ if $\rho$ is orthogonally self-dual, and
\item the product of $\on{SL}(W_1H^1(\Sigma_{g,n}, \mathbb V^\rho))$ with a finite
	subgroup of the center of $\on{GL}(W_1H^1(\Sigma_{g,n}, \mathbb V^\rho))$
	if $\rho$ is not self-dual.
\end{enumerate}
\end{theorem}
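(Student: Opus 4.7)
The plan is to reduce the theorem to showing that the identity component $G^\circ$ of the Zariski closure of the image of $R_{\varphi,\rho}$ coincides with the commutator subgroup of its ambient classical group. I would follow the standard three-step Hodge-theoretic strategy for big monodromy: (i) $G^\circ$ is reductive, (ii) $G^\circ$ acts irreducibly on $W^1H^1(\Sigma_{g,n},\rho)$, and (iii) $G^\circ$ is not properly contained in any connected reductive subgroup of the ambient classical group preserving the relevant pairing.

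Step (i) is automatic: as the cover $\Sigma_{g',n'} \to \Sigma_{g,n}$ varies over the moduli space of pointed $H$-covers, $W^1H^1(\Sigma_{g,n},\rho)$ assembles into a polarizable variation of Hodge structure, so $G^\circ$ is reductive by Deligne's semisimplicity theorem. For step (ii), I would construct many explicit transvection-type elements in the image: lifting disjoint simple closed curves from $\Sigma_{g,n}$ to the cover produces Dehn-twist-type operators on $W^1H^1$, and the genus hypothesis should provide enough of them to rule out any $G^\circ$-invariant proper subspace. Alternatively, one may work near a boundary point of the moduli of $H$-covers where the Picard--Lefschetz action on the local system is explicit and combine this with a connectivity statement for the moduli space.

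The central step (iii) is where a ``generic Torelli theorem with coefficients'' enters. By Hodge-theoretic rigidity, if $G^\circ$ were properly contained in a connected reductive $H'$, then the Mumford--Tate group of a generic fiber would also lie in $H'$, and the period map would factor through the corresponding sub-period-domain. To derive a contradiction, I would express the infinitesimal period map as a cup-product with a Higgs-field-type class of the parabolic Higgs bundle associated to the $\rho$-isotypic summand of $H^1$ of the cover, in a framework akin to \cite{landesmanL:canonical-representations}, and show that its image generates the full Griffiths-transverse tangent space of the period domain. The genus bounds $g \geq 2\overline r+2$ and $g > \max(2\overline r+1, \overline r^2)$ should be exactly the thresholds at which a Noether-type surjectivity statement for this multiplication on global sections holds.

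The main obstacle is this surjectivity: one must show that, under the stated bound on $g$, the image of the infinitesimal period map is not contained in the tangent space to any sub-period-domain corresponding to a proper connected reductive subgroup $H'$. Proper connected reductive subgroups of the classical groups admit a known classification, reducing the argument to a finite collection of cases; each requires exhibiting deformations of the cover whose period vectors lie outside the corresponding Hodge sublocus. I expect the non-self-dual case to require ruling out internal tensor decompositions of the form $W = V \otimes V'$, while the self-dual cases additionally require excluding exceptional tensor-product embeddings among orthogonal and symplectic groups; each should be handled by a dimension count calibrated to the stated bound on $g$.
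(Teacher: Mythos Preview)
Your overall architecture is reasonable, but step (iii) contains a genuine gap that would prevent the argument from closing.

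First, the claim that the infinitesimal period map ``generates the full Griffiths-transverse tangent space of the period domain'' is false on dimension grounds once $r>1$: the base has dimension $3g-3+n$, while the horizontal tangent space of the relevant period domain has dimension on the order of $h^{1,0}\cdot h^{0,1}$, which grows like $(gr)^2$. So you cannot rule out proper sub-period-domains by surjectivity. Second, the ``known classification'' of connected reductive subgroups of a classical group acting irreducibly is not a short list, and the tensor-decomposition cases you name are not the ones that actually arise. The paper's key observation, which your proposal lacks, is that because $W^1H^1$ is a weight-one integral VHS, any simple factor of its Mumford--Tate group must act through a \emph{minuscule} representation (Zarhin/Deligne). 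This cuts the casework down to: the standard representation, wedge powers $\wedge^k$ of $\mathrm{SL}_\nu$, and (half-)spin representations of $\mathrm{SO}_\nu$.

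The paper also uses the ``generic Torelli with coefficients'' differently from what you sketch. It is not used to factor the period map through a sub-domain; rather, it shows that the local system $\rho$ can be functorially reconstructed from the weak IVHS at a general point. Consequently, any splitting of the IVHS forces a splitting of $\rho$, which is impossible since $\rho$ is irreducible. Combined with a Tannakian lemma for weight-one VHS, this gives not just irreducibility but \emph{simplicity} of the connected monodromy group in one stroke, bypassing any Dehn-twist or degeneration analysis. To eliminate the non-standard minuscule cases, the paper does not use surjectivity of the period map but instead a rank estimate: along a Schiffer variation (the tangent direction supported at a single point $p\in C$), the derivative of the period map has rank exactly $r=\dim\rho$. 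Comparing this minimal nonzero rank with what it would be for $\wedge^k$ (namely $\binom{\nu-2}{k-1}$) or for spin (namely $\geq 2^{k-3}$), together with the Riemann--Roch bound $h^{p,q}\geq (g-1)r$, forces $k\in\{1,\nu-1\}$ and rules out spin for $\nu\geq 7$. This is where the bounds $g\geq 2r+2$ and $g>r^2$ are actually consumed. Your dimension-count-per-subgroup plan does not access this rank invariant and would not yield the stated thresholds.
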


We prove \autoref{theorem:main-thm-2} in \autoref{subsection:main-proofs}. 

One may draw a number of concrete corollaries of \autoref{theorem:main-thm-1} and \autoref{theorem:main-thm-2}. 

%riga
\begin{corollary}\label{corollary:mumford-tate}
	Let $H$ be a finite group and $X$ a very general $H$-curve. Let $\overline{r}$ be the maximal dimension of an irreducible representation of $H$. Suppose either \begin{enumerate}
 \item $H$ acts freely on $X$ and the genus of $X/H$ is at least
$2\overline{r}+2$, or
\item the genus of $X/H$ is greater than $\max(2\overline{r}+1,\overline{r}^2).$
 \end{enumerate} Then the Mumford-Tate group of $H^1(X, \mathbb{Q})$ contains the commutator subgroup of $\on{Sp}(H^1(X, \mathbb{Q}))^H$ and is contained in $\on{GSp}(H^1(X, \mathbb{Q}))^H$.
\end{corollary}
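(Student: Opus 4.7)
The plan is to deduce the corollary from \autoref{theorem:main-thm-1} by combining the mapping class group computation there with the classical Deligne--André comparison between geometric monodromy and Mumford--Tate groups for a variation of Hodge structure.

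First, I would set up the geometric family. Consider the Hurwitz-type moduli stack $\mathcal{H}$ parametrizing $H$-covers of pointed genus $g$ curves of the topological type prescribed by $\varphi$, together with its tautological family of covering curves $\pi\colon \mathcal{X}\to \mathcal{H}$. Its orbifold fundamental group is commensurable with the stabilizer $\on{Mod}_\varphi\subset \on{Mod}_{g,n+1}$, and the polarizable variation of pure $\mathbb{Q}$-Hodge structure $R^1\pi_*\mathbb{Q}$ has monodromy representation given, up to finite index, by $R_\varphi$ of \eqref{equation:monodromy-map}. A very general $H$-curve $X$ corresponds to a very general point of $\mathcal{H}$, so \autoref{theorem:main-thm-1} identifies the connected monodromy group of this variation at $X$ with the commutator subgroup of $\on{Sp}(H^1(X,\mathbb{C}))^H$. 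Since the monodromy representation is defined over $\mathbb{Q}$ and its base change to $\mathbb{C}$ is this commutator, the connected monodromy $\mathbb{Q}$-group is the commutator subgroup of $\on{Sp}(H^1(X,\mathbb{Q}))^H$.

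Next, I would invoke the theorem of Deligne and André that, at a Hodge-generic point of a polarizable variation of pure Hodge structure, the connected monodromy group is a normal subgroup of the derived subgroup of the Mumford--Tate group of the fiber. As very general points are Hodge-generic, this gives the inclusion of the commutator subgroup of $\on{Sp}(H^1(X,\mathbb{Q}))^H$ inside the Mumford--Tate group of $H^1(X,\mathbb{Q})$. The reverse inclusion $\on{MT}(H^1(X,\mathbb{Q}))\subseteq \on{GSp}(H^1(X,\mathbb{Q}))^H$ is formal: the polarization is a Hodge class, which forces containment in $\on{GSp}$, and the fact that $H$ acts by morphisms of Hodge structures forces the Mumford--Tate group to centralize $H$.

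The main obstacle lies in the first step: cleanly matching the geometric monodromy of the tautological family over $\mathcal{H}$ with the topological representation $R_\varphi$, particularly tracking how the $(n+1)$-st marked point in the definition of $R_\varphi$ plays the role of the base point for the Hurwitz moduli, and handling the finite-index ambiguities among $\on{Mod}_{g,n+1}$, $\on{Mod}_\varphi$, and $\pi_1(\mathcal{H})$. These identifications are standard but require care, and once in hand the Hodge-theoretic input is essentially black-boxed.
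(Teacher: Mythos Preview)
Your proposal is correct and matches the paper's proof essentially line for line: the paper also deduces the lower bound from \autoref{theorem:main-thm-1} together with Andr\'e's theorem of the fixed part, and obtains the upper bound from the facts that the polarization is a Hodge class and that $H$ acts by morphisms of Hodge structures. The setup identifying the geometric monodromy with $R_\varphi$ that you flag as the ``main obstacle'' is exactly what the paper carries out in \autoref{subsection:moduli-preliminaries}, so your concern there is well-placed but already addressed.
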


\begin{corollary}\label{corollary:jacobian-aut}
	Let $X$ be as in \autoref{corollary:mumford-tate}. Then the endomorphism algebra of $\on{Jac}(X)$ is $\mathbb{Q}[H]$.
\end{corollary}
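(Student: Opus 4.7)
The plan is to combine \autoref{corollary:mumford-tate} with the standard principle that $\on{End}^0(\on{Jac}(X)) := \on{End}(\on{Jac}(X))\otimes \mathbb{Q}$ coincides with the commutant of the Mumford--Tate group inside $\on{End}(H^1(X,\mathbb{Q}))$. The containment $\mathbb{Q}[H] \subseteq \on{End}^0(\on{Jac}(X))$ is immediate from the $H$-action on $X$. By \autoref{corollary:mumford-tate}, the Mumford--Tate group contains the derived group of $\on{Sp}(H^1(X,\mathbb{Q}))^H$, so $\on{End}^0(\on{Jac}(X))$ is in turn contained in the commutant of that derived subgroup. Everything therefore reduces to the purely group-theoretic claim that, writing $V = H^1(X,\mathbb{Q})$, the commutant of $[\on{Sp}(V)^H,\on{Sp}(V)^H]$ in $\on{End}(V)$ is exactly $\mathbb{Q}[H]$.

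I would perform this computation after base change to $\mathbb{C}$ and descend by a dimension count at the end. Decompose $V_\mathbb{C} = \bigoplus_\rho V_\rho \otimes W_\rho$ into $H$-isotypic components. The $H$-invariant symplectic form on $V_\mathbb{C}$ restricts to a nondegenerate pairing on each isotypic summand (or between paired non-self-dual summands), and since a symplectic form on $V_\rho \otimes W_\rho$ factors as a tensor product of forms, it induces: an orthogonal form on $W_\rho$ when $\rho$ is symplectically self-dual; a symplectic form on $W_\rho$ when $\rho$ is orthogonally self-dual; and a perfect pairing between $W_\rho$ and $W_{\rho^\vee}$ when $\rho$ is not self-dual. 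This identifies $\on{Sp}(V_\mathbb{C})^H$ with a product of classical groups---factors of type $\on{O}(W_\rho)$, $\on{Sp}(W_\rho)$, and $\on{GL}(W_\rho)$ respectively---one per $H$-duality orbit of irreducibles.

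Since $\on{SO}$, $\on{Sp}$, and $\on{SL}$ are their own derived groups and act irreducibly on the corresponding $W_\rho$, Schur's lemma shows that any operator in $\on{End}(V_\mathbb{C})$ commuting with the derived subgroup must preserve each isotypic component and act as a scalar on the multiplicity space. The commutant is therefore $\bigoplus_\rho \on{End}_\mathbb{C}(V_\rho) = \mathbb{C}[H]$ by the double centralizer theorem for group algebras. Comparing $\mathbb{Q}$-dimensions then forces $\on{End}^0(\on{Jac}(X)) = \mathbb{Q}[H]$. The main care needed is in the identification of $\on{Sp}(V_\mathbb{C})^H$ as a product of classical groups---in particular, verifying in the non-self-dual case that the derived group acts as $\on{SL}(W_\rho)$ on $W_\rho$ coupled with its dual on $W_{\rho^\vee}$, so that Schur genuinely forces scalar behavior on all multiplicity spaces simultaneously.
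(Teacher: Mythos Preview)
Your proposal is correct and follows essentially the same route as the paper's own proof: identify $\on{End}^0(\on{Jac}(X))$ with the commutant of the Mumford--Tate group, use \autoref{corollary:mumford-tate} to reduce to computing the commutant of the derived subgroup $S$ of $\on{Sp}(V)^H$, pass to $\mathbb{C}$, decompose into isotypic pieces, and finish by Schur plus a dimension count. The paper packages the identification of $\on{Sp}(V_\mathbb{C})^H$ as a product of classical groups into \autoref{lemma:symplectic-centralizer}, whereas you derive it inline; otherwise the arguments are the same.

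One small point worth making explicit in your write-up: your appeal to Schur requires not only that each factor of $S$ act irreducibly on its $W_\rho$, but also that the $W_\rho$ are pairwise non-isomorphic as $S$-representations (so there are no off-diagonal maps), and in the non-self-dual case that the standard and dual representations of $\on{SL}(W_\rho)$ are genuinely non-isomorphic. Both follow once one knows $\dim W_\rho \geq 6$ under the hypotheses on $g$ (this bound appears in the proof of \autoref{theorem:main-thm-1}); it also disposes of the degenerate cases $\on{SO}_2$ and $\on{SO}_4$ implicit in your claim that ``$\on{SO}$ is its own derived group.'' You flag this concern at the end, so the argument is fine, but stating the dimension bound would make the proof self-contained.
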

These corollaries are proven in \autoref{subsec:proof-of-cors}. We expect Corollary \ref{corollary:jacobian-aut} will likely have applications in equivariant birational geometry, generalizing the applications in \cite[\S7]{HassettTschinkel2022} of 
results of \cite{grunewald2015arithmetic}.

We conclude the paper with a result on the monodromy of certain special Kodaira fibrations, that is, surfaces with a smooth projective map to
a smooth curve. We refer to these as \emph{Kodaira-Parshin fibrations} (see
\autoref{definition:kp-fibration}); loosely speaking, these Kodaira-Parshin fibrations parameterize
families of covers of a fixed curve, with a moving branch point.  See
\autoref{theorem:kodaira-monondromy} for a precise algebraic statement.
These results have a purely topological interpretation---namely, they say that
for the representations considered in \autoref{theorem:main-thm-1}, their
restrictions to certain ``point-pushing'' subgroups have large image; see
\autoref{corollary:point-pushing-big-image} for a precise statement.

\subsection{The large $n$ regime}

Another regime in which we expect big monodromy is the case that we fix $g$ and
let $n$ grow large.
Specifically, we conjecture the analog of \autoref{theorem:main-thm-1} in this
context.

\begin{conjecture}\label{conjecture:large-n}
Let $H$ be a finite group and let $\Sigma_{g',n'}\to \Sigma_{g,n}$ be the
$H$-cover associated to a homomorphism $\varphi: \pi_1(\Sigma_{g,n}, x)\twoheadrightarrow H$ where $x$ is a base point of $\Sigma_{g,n}$. 
Let $\rho: H\to \on{GL}_r(\mathbb{C})$ be an irreducible $H$-representation and
$\mathbb V^\rho$ the local system associated to $\rho \circ \varphi$. 
We conjecture there is a function $c(g,\dim \rho)$ with the following property.
Suppose there are $\Delta > c(g,\dim \rho)$ points of $\Sigma_g - \Sigma_{g,n}$ so that
a small loop around each of these points is sent to a non-identity matrix under the
composition
$\pi_1(\Sigma_{g,n}) \xrightarrow{\varphi} H \xrightarrow{\rho} \on{GL}_r(\mathbb C)$.
Then, the image of the stabilizer $\on{Mod}_\varphi$ of $\varphi$ in
$\on{Mod}_{g,n+1}$ inside $\on{GL}(W_1H^1(\Sigma_{g,n}, \mathbb V^\rho))$ is Zariski-dense in 
\begin{enumerate}
\item $\on{SO}(W_1H^1(\Sigma_{g,n}, \mathbb V^\rho))$ if $\rho$ is symplectically self-dual,
\item $\on{Sp}(W_1H^1(\Sigma_{g,n}, \mathbb V^\rho))$ if $\rho$ is orthogonally self-dual, and
\item the product of $\on{SL}(W_1H^1(\Sigma_{g,n}, \mathbb V^\rho))$ 
with a finite subgroup of the center of $\on{GL}(W_1H^1(\Sigma_{g,n}, \mathbb V^\rho))$
if $\rho$ is not self-dual.\end{enumerate}
\end{conjecture}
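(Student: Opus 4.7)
The plan is to combine Hodge-theoretic semisimplicity of the geometric monodromy with a Picard--Lefschetz analysis of the local monodromy around collisions of branch points. I would first realize the target representation as the monodromy of a polarized variation of Hodge structure over a Hurwitz-type moduli space $\mathscr{H}$ parameterizing $H$-covers of $\Sigma_g$ with the $n$ branch points marked and their local monodromy classes in $H$ fixed, so that exactly $\Delta$ of the punctures are sent to non-identity matrices by $\rho$. A natural homomorphism from a finite-index subgroup of $\on{Mod}_\varphi$ to $\pi_1(\mathscr{H})$ identifies the virtual image of $R_{\varphi,\rho}$ with the geometric monodromy of this VHS on $W^1H^1(\Sigma_{g,n}, \rho)$. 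By Deligne's semisimplicity theorem, the identity component $M$ of its Zariski closure is a connected reductive subgroup of the target classical group $G_\rho$ determined by the duality type of $\rho$, exactly as in the proofs of \autoref{theorem:main-thm-1} and \autoref{theorem:main-thm-2}.

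The key new input should come from pseudo-reflections in $M$ produced by allowing pairs of the $\Delta$ ramified branch points to collide. Near such a boundary stratum of a suitable partial compactification of $\mathscr{H}$, the local monodromy on $W^1H^1(\Sigma_{g,n}, \rho)$ is computed from the nearby cycles sheaf of the degenerating family of $H$-covers. I expect that when two ramified punctures $p_i, p_j$ coalesce, the resulting monodromy acts as a pseudo-reflection whose rank and eigenvalue structure are controlled by the conjugacy classes of $\rho(\gamma_i)$ and $\rho(\gamma_j)$. Running over distinct such collisions among the $\Delta$ ramified punctures should produce a large supply of pseudo-reflections inside $M$; a combinatorial argument, using the $H$-equivariance constraints and the linear independence of vanishing cycles for generic pairs of collisions, should ensure that enough of them lie in general position inside $W^1H^1(\Sigma_{g,n}, \rho)$.

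The final step is to combine this with an irreducibility statement in order to invoke the classification of irreducible complex reflection groups (Wall, Kantor, Zalesski{\u\i}--Sere\v{z}kin, Guralnick--Saxl). Irreducibility of the action of $M$ on $W^1H^1(\Sigma_{g,n}, \rho)$ should follow from a Birman-type exact-sequence argument: a finite-index subgroup of $\on{Mod}_\varphi$ contains a point-pushing subgroup modeled on $\pi_1$ of a configuration space of points on $\Sigma_g$, whose action can be analyzed by the strategy of \cite{landesmanL:canonical-representations}. Once $M$ acts irreducibly and contains enough pseudo-reflections of the appropriate type, the classification forces $M = G_\rho$ as soon as $\Delta$ exceeds an explicit bound $c(g, \dim \rho)$ coming from these theorems.

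The main obstacle, in my view, is quantifying $c(g, \dim \rho)$: many ramified punctures could a priori produce pseudo-reflections sharing a common fixed subspace, and controlling this requires a delicate analysis of how the vanishing cycles interact with the isotypic decomposition of $H^1(\Sigma_{g'}, \mathbb{C})$ and with the conjugacy class data of the $\rho(\gamma_i)$. A secondary obstacle is the presence of ``global'' contributions from $H^1(\Sigma_g)$ when $g \geq 1$: these do not see the extra branch points at all, so one likely must pass to a sub-VHS or quotient before the reflection-group classification applies, and this is presumably why $c(g, \dim \rho)$ must be allowed to depend on $g$ rather than on $\dim \rho$ alone.
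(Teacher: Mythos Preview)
The statement you are attempting to prove is \autoref{conjecture:large-n}, which the paper poses as an \emph{open conjecture}; there is no proof of it in the paper. What the paper does prove is the strictly weaker \autoref{theorem:large-n}: under the explicit bound $\Delta > \frac{3r^2}{\sqrt{g+1}} + 8r$, the virtual action of $\on{Mod}_{g,n+1}$ on $W^1H^1(\Sigma_{g,n}, \rho)$ has no nonzero finite-orbit vectors. This falls well short of Zariski-density in the full classical group, and the paper is explicit that the density statement remains conjectural.

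The paper's argument for \autoref{theorem:large-n} is also quite different in spirit from your outline. It bounds the spread of the Harder--Narasimhan slopes of $E^\rho_0$ on a general curve via isomonodromic deformation (\autoref{corollary:mu-diff}), combines this with an elementary degree bound in terms of $\Delta$ (\autoref{lemma:degree-bound}) to force $(E^\rho_0)^\vee \otimes \omega_C$ (or its $\rho^\vee$-analogue) to be globally generated, and then deduces the absence of finite-orbit vectors from the non-degeneracy of the derivative of the period map, as in \cite{landesmanL:applications-putman-wieland}. No Picard--Lefschetz theory, degeneration to boundary strata, or reflection-group classification appears.

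Your proposal is a plausible research outline rather than a proof, and you already flag the main difficulties. Two further gaps are worth naming. First, there is no reason to expect the local monodromy around a collision of two ramified punctures to be a \emph{pseudo-reflection}: the vanishing-cycle contribution to $W^1H^1(\Sigma_{g,n}, \rho)$ when punctures with local inertia $h_i, h_j$ coalesce has rank governed by quantities like $\dim(\rho/\rho^{h_ih_j})$, which is typically $>1$ for higher-dimensional $\rho$. The classification theorems you invoke require genuine (bi)reflections or transvections, so this step needs a different idea. Second, the irreducibility input you hope to borrow from \cite{landesmanL:canonical-representations} is precisely what is \emph{not} available in this regime: those methods, and the paper's own generic Torelli machinery in \autoref{section:generic-torelli}, all require $g$ large relative to $r$, which is exactly the hypothesis \autoref{conjecture:large-n} is designed to avoid.
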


\begin{remark}[Motivation from arithmetic statistics]
	\label{remark:arithmetic-statistics}
A number of works in arithmetic statistics over function fields have proven
results in a large $q$ limit setting by computing relevant monodromy groups with
finite coefficients
associated to spaces of $H$-covers.
See
\cite{Achter:cohenQuadratic,ellenbergVW:cohen-lenstra,fengLR:geometric-distribution-of-selmer-groups,parkW:average-selmer-rank-in-quadratic-twist-families,ellenbergL:homological-stability-for-generalized-hurwitz-spaces}
for a few examples of this; the last three references are connected to $H$-covers for a
particular group $H$ via \cite[Proposition 6.4.5]{ellenbergL:homological-stability-for-generalized-hurwitz-spaces}.

Verifying \autoref{conjecture:large-n} would give some evidence for analogous
conjectures in number theory, as it would suggest a similar big monodromy
result should be true with finite coefficients.
%, which could lead to a version of
%these conjectures over $\mathbb F_q$, where one first takes a limit as $q \to
%\infty$.
%Conversely, the conjectures in number theory suggest that
%\autoref{conjecture:large-n} should be true.
\end{remark}

As evidence for \autoref{conjecture:large-n}, we prove the following implication of
the conjecture. If the
monodromy is Zariski dense in the subgroups listed in \autoref{conjecture:large-n}, then it is not contained in any nontrivial parabolic,
and so in particular it does not fix any vectors.

\begin{theorem}
	\label{theorem:large-n}
Let $H$ be a finite group, $\Sigma_{g',n'}\to \Sigma_{g,n}$  an
$H$-cover, and  $\rho: H\to \on{GL}_r(\mathbb{C})$  an irreducible $H$-representation. 
%Let $m$ denote the maximum order of an element of $H$.
Suppose there are $$\Delta > \nbound$$ points of $\Sigma_g - \Sigma_{g,n}$ so that
a small loop around each of these points is sent to a non-identity matrix under the
composition
$\pi_1(\Sigma_{g,n}) \xrightarrow{\varphi} H \xrightarrow{\rho} \on{GL}_r(\mathbb C)$.
Then, setting $\on{Mod}_\varphi\subset \on{Mod}_{g,n+1}$ to be the stabilizer of $\varphi$, there are no non-zero vectors with finite orbit under the image of $\on{Mod}_{\varphi}$ in $\on{GL}(W_1H^1(\Sigma_{g,n},
\mathbb V^\rho))$,
for $\mathbb V^\rho$ the local system associated to $\rho \circ \varphi$. 
\end{theorem}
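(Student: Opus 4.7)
The plan is to argue by contradiction: suppose $v\in W^1H^1(\Sigma_{g,n},\rho)$ is a nonzero vector with finite orbit under the image of $\on{Mod}_\varphi$, and derive a contradiction in three steps, the first two routine and the third carrying the quantitative content.

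Step one is to reduce to the case where $v$ is fixed by the full group $\on{Mod}_\varphi$. The $\on{Mod}_\varphi$-span of $v$ is a finite-dimensional stable subspace on which $\on{Mod}_\varphi$ acts through a finite quotient, so its kernel is of finite index. Standard covering-space arguments let us replace $\varphi:\pi_1(\Sigma_{g,n},x)\twoheadrightarrow H$ by a Galois refinement $\varphi'\colon\pi_1(\Sigma_{g,n},x)\twoheadrightarrow H'$ factoring through $\varphi$ (chosen so that its kernel lies inside this finite-index fixer) and $\rho$ by an irreducible constituent $\rho'$ of a suitable $H'$-representation which contains $v$ in its $W^1H^1$. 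I would check that the branching hypothesis is preserved: a loop whose image under $\rho\circ\varphi$ is non-identity retains non-identity image under $\rho'\circ\varphi'$ (since $\rho$ occurs as a subquotient of an appropriate restriction of $\rho'$ to $H$), so $\Delta$ does not decrease. It then suffices to prove $W^1H^1(\Sigma_{g,n},\rho)^{\on{Mod}_\varphi}=0$ under the hypothesis $\Delta>\nbound$.

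Step two translates this vanishing into Hodge theory. Over an appropriate moduli base $\scm$ parameterizing deformations of the cover together with the extra marked point, the cohomologies assemble into a polarized variation of Hodge structure $\bv$ with fiber $W^1H^1(\Sigma_{g,n},\rho)$ at the chosen basepoint. A $\on{Mod}_\varphi$-invariant vector is a flat global section $s$ of $\bv$. Applying Deligne semisimplicity for polarized VHS to the sub-VHS generated by $s$, the Hodge $(1,0)$-projection $s^{1,0}$ is itself flat, giving a nowhere-vanishing holomorphic section of the Hodge bundle $\scf:=F^1\bv\otimes\sco_\scm$ which lies in the kernel of the Kodaira--Spencer Higgs field $\theta\colon\scf\to(\scv/\scf)\otimes\Omega^1_\scm$.

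The main obstacle is step three: ruling out such a $\theta$-flat section under the branching hypothesis. The strategy is to compute $\theta$ explicitly via Schiffer-style variations at the $\Delta$ branch points. Each branch point $p$ at which the local $\rho$-monodromy is non-identity supplies a cotangent direction of $\scm$ in which $\theta$ acts by a linear map of controlled rank, computable from the $\rho$-action via the residue pairing on the covering curve. Combining these linear constraints across all $\Delta$ branch points and invoking an Arakelov-type slope inequality for the sub-Higgs line $\sco\cdot s^{1,0}\subset\scf$ should force $\Delta\leq\nbound$, contradicting the hypothesis. Producing the precise numerical constant is the technical core of the argument: the term $3r^2/\sqrt{g+1}$ is characteristic of a Cauchy--Schwarz-type slope bound (the square root entering via Riemann--Hurwitz applied to the $H$-cover), while the additive $8r$ absorbs linear-in-$r$ contributions from the ramification and the boundary positivity of $\Omega^1_\scm(\log D)$.
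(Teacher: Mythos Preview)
Your Steps 1--2 are broadly on the right track but Step 1 is both unnecessary and dangerous. Passing to a refinement $\varphi'$ and an irreducible $H'$-constituent $\rho'$ will in general \emph{increase} the rank $r$, and since the hypothesis $\Delta>\frac{3r^2}{\sqrt{g+1}}+8r$ is monotone in $r$, you may lose the hypothesis entirely; checking that $\Delta$ does not decrease is beside the point. The paper avoids this by never changing $\rho$: a finite-orbit vector becomes flat after passing to a finite \'etale cover of the moduli base $\mathscr{M}$, which leaves $(\rho,r,g,\Delta)$ untouched, and the derivative of the period map over the cover is computed by the identical bilinear form $B^\rho_m$ on the fiber.

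The genuine gap is Step 3, where your proposed mechanism is not the one that produces the bound, and I do not see how to make it work. The paper does not use Schiffer variations at branch points, Arakelov inequalities, or Riemann--Hurwitz; in particular the $\sqrt{g+1}$ has nothing to do with Riemann--Hurwitz on the $H$-cover. The actual argument runs as follows. First, an elementary parabolic-weight count (\autoref{lemma:degree-bound}) shows that one of $E^\rho_0,E^{\rho^\vee}_0$ has slope $\le -\Delta/(2r)$; since the two monodromy representations are dual and semisimple, one may replace $\rho$ by $\rho^\vee$ and assume it is $E^\rho_0$. Second, a constraint on the Harder--Narasimhan filtration of an isomonodromic deformation to a general curve (\autoref{theorem:hn-constraints}, a sharpening of \cite[Theorem~1.3.4]{landesmanL:geometric-local-systems}) forces adjacent HN slope jumps to occur between graded pieces whose ranks multiply to at least $g+1$; applying AM--GM and summing over disjoint length-$3$ windows gives $\mu_{\max}-\mu_{\min}\le \frac{3r}{2\sqrt{g+1}}+3$. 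Combining these two facts, the hypothesis on $\Delta$ forces every HN slope of $(E^\rho_0)^\vee\otimes\omega_C$ above $2g-1$, hence global generation. Finally, a flat section in $F^1$ gives an element of the left kernel of $B^\rho_m$, i.e.\ a nonzero sheaf map $(E^\rho_0)^\vee\otimes\omega_C\to\omega_C^{\otimes 2}(D)$ inducing zero on $H^0$, contradicting (even generic) global generation. The isomonodromic HN constraint is the key input you are missing, and it is what produces both the $\sqrt{g+1}$ (via AM--GM on the rank product $\ge g+1$) and the additive $8r$ (from $\Delta/(2r)>\mu_{\mathrm{diff}}+1$).
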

We prove \autoref{theorem:large-n} in \autoref{subsection:large-n}.

\begin{remark}
	\label{remark:}
	\autoref{theorem:large-n} verifies new cases of the Putman-Wieland
	conjecture, \cite[Conjecture 1.2]{putmanW:abelian-quotients}.
	See also \cite[\S1.8.4]{landesmanL:canonical-representations}
	for a summary of other known cases of the Putman-Wieland conjecture.
	Note that
	\cite[Theorem 1.4.2]{landesmanL:canonical-representations} gives a
	variant of \autoref{theorem:large-n} when $g$ is large relative to
	$r$, in comparison to
\autoref{theorem:large-n},
	where we think of
	$\Delta$ as being large relative to $g$ and $r$.
\end{remark}
\begin{remark}\label{remark:large-n-expectation}
We have opted to write \autoref{theorem:large-n} with a bound on $\Delta$ depending only on the rank $r$ of our given representation $\rho$ and on the genus $g$ of $\Sigma_{g,n}$. We expect, however, that our methods could give stronger bounds in terms of the eigenvalues of the local monodromy of $\rho\circ \varphi$.
\end{remark}

\subsection{Previous work}
There is a great deal of past work related to big monodromy for  higher
Prym representations.
In the case that $H$ is abelian, \autoref{theorem:main-thm-1} 
%(and much more) 
follows from 
\cite[Corollary 2.6]{looijenga:prym-representations}.
Higher Prym representations corresponding to certain non-abelian covering groups $H$ were considered in
\cite{grunewald2015arithmetic},
and shown to give a rich class of representations of mapping class groups under certain
conditions, (namely when the cover is ``$\phi$-redundant,'') see
\cite[Theorem 1.2 and 1.6]{grunewald2015arithmetic}.
A variant for free groups was previously considered in
\cite{grunewald2009linear}. See also the recent paper \cite{looijenga2021arithmetic} for a criterion for big monodromy along somewhat different lines.
In the four papers above, the representations above were in fact shown to have \emph{arithmetic image}, meaning that
they have finite index in the integral points of their Zariski closure. Our techniques seem not to be able to establish anything towards arithmeticity. There are known examples of families of cyclic branched covers of genus zero curves with non-arithmetic monodromy \cite{deligne-mostow}.

Further arithmeticity results associated to Prym representations, primarily in
the case
that $H$ is abelian, were given in
\cite{mcmullen:braid-groups},
\cite{venkataramana:image-of-the-burau-representation},
and
\cite{venkataramana:monodromy-of-cyclic-coverings}.
The paper
\cite{salterT:arithmeticity-of-the-monodromy} proves arithmeticity of monodromy groups of certain Kodaira fibrations, corresponding to $H$-covers with $H$ a finite Heisenberg group.
In a more arithmetic direction, big monodromy associated to certain Prym
representations played a crucial role in the recent proof of Faltings' theorem
given in
\cite{lawrenceV:diophantine-problems}, see
\cite[Theorem 8.1]{lawrenceV:diophantine-problems}.

In the setting of covers of projective lines, i.e.~in a $g=0$ analogue of the
setting of \autoref{theorem:main-thm-1}, a big mod $\ell$ monodromy result was
proven by Jain \cite[Theorem 5.4.2]{jain:big-mod-ell-monodromy} when $H$ has
trivial Schur multiplier for $\ell\nmid 2|H|$. Big mod $\ell$ monodromy can
often be used to prove that the $\ell$-adic closure of the image of the mapping
class group is large, which implies big Zariski closure. A key tool is a result of
Conway-Parker (see e.g. \cite[Appendix]{fried-volklein}, 
as well as \cite[Proposition
3.4]{ellenbergVW:cohen-lenstra} and \cite{wood:an-algebraic-lifting-invariant})
which gives a stabilization of the braid group action on finite quotients
of $\pi_1(\Sigma_{g,n})$.
An analogue of this tool in the higher genus case is work of
Dunfield-Thurston 
\cite[Proposition 6.16]{dunfield-thurston},
which shows a stabilization in $g$ of the mapping class group action
on finite quotients of $\pi_1(\Sigma_g)$ as $g$ grows. The paper \cite{samperton:schur-type-invariants} gives a partial analogue of these results for the mapping class group action on
$\pi_1(\Sigma_{g,n})$ with $n > 0$. 

It may be possible to use \cite[Proposition 6.16]{dunfield-thurston} to prove a higher genus analogue of \cite[Theorem 5.4.2]{jain:big-mod-ell-monodromy}. Such a result would have the advantage over \autoref{theorem:main-thm-1} that it would give more precise information on the $\ell$-adic image of the mapping class group, but the disadvantage that the required lower bound on the genus would be ineffective.
Partial results in this direction were obtained in recent work of Sawin-Wood
\cite{sawinW:finite-quotients-of-3-manifold-groups} in the course of studying Heegard splittings of 3-manifolds.
This work uses \cite[Proposition 6.16]{dunfield-thurston} to compute the intersections of maximal isotropic subspaces with their translates by random
elements drawn from the monodromy group of Prym representations, though falls
short of computing the actual monodromy group.

Finally, recent work of
Landesman-Litt \cite{landesmanL:canonical-representations, landesmanL:applications-putman-wieland, landesmanL:introduction-putman-wieland}
shows that the monodromy group for higher Prym representations is not too small,
in the sense that it has no nonzero finite orbit vectors. Here, we build on the
methods developed in those papers to prove the monodromy group is as big as
possible. 

We next describe the primary new ideas of this work that do not appear in \cite{landesmanL:canonical-representations, landesmanL:applications-putman-wieland, landesmanL:introduction-putman-wieland}.

\subsection{Innovations of the proof}
To explain the main new ideas going into our paper, we begin with a sketch of the proof of \autoref{theorem:main-thm-2}. 

\subsubsection{Setup for the proof}
Consider a family of $n$-pointed curves $\pi: \mathscr{C}\to \mathscr{M}$, with associated family of punctured curves $\pi^\circ: \mathscr{C}^\circ\to \mathscr{M}$, so that the induced map $\mathscr{M}\to \mathscr{M}_{g,n}$ is dominant \'etale. Let $\mathbb{V}$ be a complex local system on $\mathscr{C}^\circ$ with finite monodromy, whose restriction to a fiber of $\pi^\circ$ has monodromy given by $\rho$. It suffices to compute the connected monodromy group of $W_1R^1\pi^\circ_*\mathbb{V}$, as the monodromy representation on this local system factors through the representation we are interested in. The main idea of the proof is to analyze the derivative of the period map associated to $W_1R^1\pi^\circ_*\mathbb{V}$.

\subsubsection{A novel technique: functorial reconstruction}
Using techniques building on those developed in
\cite{landesmanL:geometric-local-systems, landesmanL:canonical-representations}
we show (in \autoref{thm:vector-bundle-reconstruction} and \autoref{proposition:functoriality}) that (given our assumptions on $g$ and $\dim \mathbb{V}$) that the monodromy representation $\rho$ can be \emph{functorially reconstructed} from the derivative of the period map associated to $W_1R^1\pi^\circ_*\mathbb{V}$ at a generic point of $\mathscr{M}$ along so-called Schiffer variations. We think of this reconstruction as a new kind of ``generic Torelli theorem with coefficients'' --- this is our main technical tool. 

This enables a novel strategy to obtain information about the local system $W_1 R^1\pi^\circ_*\mathbb{V}$. We first assume for contradiction that the monodromy group of $W_1 R^1\pi^\circ_*\mathbb{V}$ has some undesirable property. Using the general theory of variations of Hodge structures, we describe the consequences of this property for the variation of Hodge structures, and in particular for the derivative of its period map. We then examine the consequences those properties have on the local system obtained by applying our functorial reconstruction algorithm, and finally show these contradict known properties of $\mathbb{V}$. 

\subsubsection{Proving \autoref{theorem:main-thm-2} by repeatedly applying
functorial reconstruction}
For example, to show $W_1 R^1\pi^\circ_*\mathbb{V}$ is irreducible as a representation of the monodromy group, we assume for contradiction it is reducible. It would be convenient if this implies that $W_1 R^1\pi^\circ_*\mathbb{V}$ is reducible as a representation of Hodge structures, but this is not the case -- it could instead be, for example, the tensor product of a fixed irreducible Hodge structure with an irreducible variation of Hodge structure. However, in this case the derivative of the period map is still reducible in a suitable sense, and in fact one can check this holds for any variation of Hodge structures whose underlying local system is reducible. Applying the reconstruction algorithm to a reducible derivative of the period map, we obtain a reducible local system, contradicting the irreducibility of $\mathbb{V}$.

A slight enhancement of this argument, namely \autoref{theorem:simplicity-of-VHS}, shows that the connected monodromy group of $W_1R^1\pi^\circ_*\mathbb{V}$ is in fact a \emph{simple}
group, acting irreducibly. By the classification of simple factors of Mumford-Tate groups of Abelian varieties, this
group necessarily acts through a minuscule representation,
see \cite[Theorem 0.5.1(b)]{zarhin1984weights}.

We rule out nonstandard representations in \autoref{section:main-proofs} via the same strategy. We show that, for these representations, the rank of the derivative of the period map along a Schiffer variation is necessarily large. Applying the reconstruction algorithm gives a local system of large rank -- in fact, larger than the rank of $\mathbb{V}$, leading to a contradiction. 

The last case remaining to rule out is when the monodromy group of $W_1 R^1\pi^\circ_*\mathbb{V}$ is a classical group but $\mathbb{V}$ is not self-dual. In this case we show that  $W_1 R^1\pi^\circ_*\mathbb{V}$ is self-dual as a variation of Hodge structures, and deduce from the reconstruction algorithm that $\mathbb V$ is self-dual, obtaining a contradiction.
This concludes our sketch of the idea of the proof of
\autoref{theorem:main-thm-2}.

\subsubsection{Proving \autoref{theorem:main-thm-1}}
To deduce \autoref{theorem:main-thm-1}, one may explicitly describe the
commutator subgroup of $\on{Sp}(H^1(\Sigma_{g'}, \mathbb{C}))^H$ as a product of
the groups appearing in \autoref{theorem:main-thm-2} --- that theorem implies
that the connected monodromy group of $H^1(\Sigma_{g'}, \mathbb{C})$ surjects
onto each of the simple factors of $\on{Sp}(H^1(\Sigma_{g'}, \mathbb{C}))^H$. It
then suffices by the Goursat-Kolchin-Ribet criterion of Katz \cite[Proposition
1.8.2]{katz-esde} to show that for $\rho_1, \rho_2$ irreducible
$H$-representations, with associated local systems $\mathbb{U}_1, \mathbb{U}_2$
on $\mathscr{C}^\circ$, an isomorphism $W_1R^1\pi^\circ_*\mathbb{U}_1\simeq
W_1R^1\pi^\circ_*\mathbb{U}_2$ necessarily comes from an isomorphism
$\rho_1\simeq\rho_2$. We deduce this from our functorial reconstruction
results.
%generic Torelli theorems with coefficients, \autoref{thm:vector-bundle-reconstruction} and \autoref{proposition:functoriality}.

\subsubsection{A new technical ingredient:
global generation}

Although the heart of the our functorial reconstruction results rest on understanding the derivative of a
certain period map, following similar techniques introduced in \cite{landesmanL:applications-putman-wieland,
landesmanL:canonical-representations}, there are a number of substantial 
innovations.
In particular, the key to the proof of \autoref{thm:vector-bundle-reconstruction} is \autoref{proposition:globally-generated}, which analyzes the global generation properties of flat vector bundles under isomonodromic deformation.
In past work, the first two authors were only able to prove that the relevant
vector bundles were {\em generically} globally generated, as opposed to actually
being globally generated. By analyzing the
obstruction for generically globally generated bundles to be globally
generated, we are able to push the methods developed previously farther.

The proof of our main results
%It is worth noting a feature of our arguments that does not appear in \cite{landesmanL:geometric-local-systems, landesmanL:geometric-local-systems}---namely that they demonstrate the connection between 
underlies a new connection between \emph{global generation} of certain vector
bundles on curves and questions about big monodromy (see
\autoref{section:questions} for further details). As far as we are aware this
connection is totally novel.
%\subsection{Outline}
%
%We now outline what is covered in the various sections of this paper.
%In \autoref{notation:versal-family-of-covers} we record notation used throughout
%the paper.
%In \autoref{section:parabolic-review} we review background on parabolic bundles
%their relation to period maps.
%In \autoref{section:global-generation}, we prove some statements about global
%generation of vector bundles on curves, which will be used later crucially in
%our generic Torelli theorems.
%Next, in \autoref{section:vhs} we recall relevant background regarding
%variations of Hodge structure.
%With the above in place, in \autoref{section:generic-torelli}, we prove generic
%Torelli theorems, which allow us to recover certain local systems from their
%infinitesimal variations of Hodge structure.
%With the above in place, we prove our main theorems in
%\autoref{section:main-proofs} and our results in the large $n$ regime in
%\autoref{section:large-n}.
%We prove our results related to Kodaira-Parshin families and the point-pushing
%subgroup in
%\autoref{section:kodaira}.
%We conclude with further questions in 
%\autoref{section:questions}. 

\subsection{Acknowledgments}
Landesman was supported by
the National Science
Foundation 
under Award No.
DMS 2102955.
Litt was supported by the NSERC Discovery Grant, ``Anabelian methods in arithmetic and
algebraic geometry.''  Sawin was supported by NSF Grant DMS-2101491 and a Sloan Research Fellowship. The authors are grateful for useful discussions with Kevin Chang, Josh Lam, Eduard Looijenga, Alex Lubotzky, Andrew Putman, Kasra Rafi, Andy Ramirez-Cote, and Bena Tshishiku.

\section{Notation and preliminaries on moduli}\label{section:notation-and-moduli}
\subsection{Notation}\label{notation:versal-family-of-covers}
	Throughout this paper, we work over the complex numbers.
	Suppose we are given a smooth proper family of curves $\pi: \mathscr C \to
	\mathscr M$ with geometrically connected fibers and $n$ sections $s_1, \ldots, s_n: \mathscr M \to \mathscr
	C$ with disjoint images
	$\mathscr{D}_1,\cdots,\mathscr{D}_n \subset \mathscr C$. 
	Let $\mathscr{C}^\circ=\mathscr{C} - \{\mathscr{D}_1, \cdots,
	\mathscr{D}_n\}$.
	If the induced map $\mathscr M \to \mathscr M_{g,n}$ is dominant
	\'etale, we say $\pi$ is a \emph{versal family of $n$-pointed curves of genus $g$}, and refer to $\pi^\circ := \pi|_{\mathscr C^\circ}$ as the associated \emph{versal family of $n$-times punctured curves}. 

	Suppose, moreover, we have
	a diagram 
	\begin{equation}
	\xymatrix{
		\mathscr{X} \ar[r]^f \ar[rd]_{\pi'}& \mathscr{C} \ar[d]_\pi\\
	&  \mathscr{M}, \ar@/_/[u]_{s_1, \cdots, s_n} }
		\label{equation:versal-diagram}
	\end{equation}
	where $\pi$ is a versal family of $n$-pointed curves of genus $g$,
	and $\pi'$ is a smooth proper curve with geometrically connected fibers of
	genus $g'$.
	Suppose $f$ is finite, Galois, and unramified away from $\cup_{i=1}^n \mathscr{D}_i$. Let $\mathscr{X}^\circ=f^{-1}(\mathscr{C}^\circ),$ let $\pi^\circ :=
	\pi|_{\mathscr C^\circ}$, and let $f^\circ=f|_{\mathscr{X}^\circ}$. 
	For $m\in \mathscr{M}$  a point, we set $C=\mathscr{C}_m$, $C^\circ
	=\mathscr{C}^\circ_m$, and $D=C - C^\circ$; let $c\in C^\circ$ be a point. 
	As $f^\circ$ is finite Galois, it induces a surjection $
	\pi_1(\mathscr{C}^\circ,c)\twoheadrightarrow H$ for some finite group
	$H$. As $\pi'$ has geometrically connected fibers and $f$ is Galois, the composition $$\psi: \pi_1(C^\circ, c)\to \pi_1(\mathscr{C}^\circ, c)\to H$$ is surjective. 
	
	Let $x\in \Sigma_{g,n}$ be a basepoint and $\varphi: \pi_1(\Sigma_{g,n}, x)\twoheadrightarrow H$ a surjection.
	A \emph{ versal family of $\varphi$-covers} is the data of a diagram as in \eqref{equation:versal-diagram} above, together with
	\begin{enumerate}
\item a point $c\in \mathscr{C}^\circ$, $m=\pi^\circ(c)$, $C^\circ = \mathscr
	C^\circ_m$, and an identification $i: (\Sigma_{g, n}, x)\simeq
(C^\circ, c)$, such that
\item under this identification the map $\psi$ above identifies with $\varphi$.
\end{enumerate}

If $\rho$ is a representation of a finite group $H$, $\rho: H \to \gl_r(\mathbb C)$ 
and $\varphi: \pi_1(\Sigma_{g,n},x) \to H$ is a map, we often use $\mathbb
V^\rho$ to denote the local system on $\Sigma_{g,n}$ associated to $\rho \circ
\varphi$.

\begin{remark}
	\label{remark:}
	Note that versal families of $\varphi$-covers exist by
\cite[Theorem 4]{wewers:thesis}.
One may also construct such a versal family by taking an open substack of the
stack
$\mathscr B_{g,n}(H)$ as constructed in
\cite[\S2.2]{abramovichCV:twisted-bundles} (where the group $H$ here is called
$G$ there).
The above constructions give versal families of Deligne-Mumford stacks.
%but their generic stabilizers are finite
%\'etale, 
Hence, if one wishes, one may pass to a dominant \'etale cover of the $\mathscr{M}$ thus constructed, making all the objects in question schemes.
\end{remark}
\subsection{Preliminaries on moduli}\label{subsection:moduli-preliminaries}
In this section we explain how to interpret the main theorems of this paper as being about the monodromy of (summands of) $R^1\pi'_*\mathbb{C}$, for $\pi'$ as in \eqref{equation:versal-diagram}, arising from a versal family of $H$-covers.

Recall that, given a cover $\Sigma_{g', n'}\to \Sigma_{g,n}$, we are studying
the action of finite index subgroups of $\on{Mod}_{g,n+1}$ on the abelianization
of $\pi_1(\Sigma_{g', n'}, x')$, a finite index subgroup of $\pi_1(\Sigma_{g,n},x)$. 
One may interpret the action of $\on{Mod}_{g,n+1}$ on $\pi_1(\Sigma_{g,n}, x)$ as follows. Let $\mathscr{M}_{g,n}$ be the Deligne-Mumford moduli stack of genus $g$ curves with $n$ marked points,  $\mathscr{C}_{g,n}/\mathscr{M}_{g,n}$ be the universal family, and $\mathscr{C}_{g,n}^{\circ}$ the associated family of $n$-times punctured curves. Note that $\mathscr{C}_{g,n}^\circ$ is canonically isomorphic to $\mathscr{M}_{g,n+1}$. 

Now consider the map $$p_1: \mathscr{C}_{g,n}^\circ \times_{\mathscr{M}_{g,n}}\mathscr{C}_{g,n}^\circ\to \mathscr{C}_{g,n}^\circ$$ given by projection onto the first factor. This map has a canonical section given by the diagonal $\Delta$. Thus there is a  short exact sequence of fundamental groups $$1\to \pi_1(\Sigma_{g,n})\to \pi_1(\mathscr{C}_{g,n}^\circ \times_{\mathscr{M}_{g,n}}\mathscr{C}_{g,n}^\circ)\overset{p_{1*}}{\longrightarrow} \pi_1(\mathscr{C}_{g,n}^\circ)\to 1,$$ split by $\Delta_*$,
inducing a natural action of $\pi_1(\mathscr{C}_{g,n}^\circ)$ on $\pi_1(\Sigma_{g,n})$. It follows from e.g.~\cite[\S10.6.3 and p.~353]{farbM:a-primer} that there is a natural identification of $\pi_1(\mathscr{C}_{g,n}^\circ)=\pi_1(\mathscr{M}_{g,n+1})$ with $\on{Mod}_{g,n+1}$, under which this action identifies with the one we are studying.

Fix a surjection $\varphi: \pi_1(\Sigma_{g,n},x)\twoheadrightarrow H$ and a versal family of $\varphi$-covers with notation as in \eqref{equation:versal-diagram}. In analogy with the fiber product construction above, we may consider the map $$q_1: \mathscr{C}^\circ\times_{\mathscr{M}} \mathscr{X}\to \mathscr{C}^\circ$$ given by projection onto the first coordinate. We claim that the monodromy of $R^1q_{1*}\mathbb{C}$ factors through the representation $R_\varphi$ studied in \autoref{theorem:main-thm-1}, and indeed factors through a finite index subgroup of $\on{Mod}_{\varphi}$. 

We first construct a map from $\pi_1(\mathscr{C}^\circ)$ to $\on{Mod}_\varphi$. The family of $n+1$-pointed curves $$\mathscr{C}^\circ\times_{\mathscr{M}} \mathscr{C}\to \mathscr{C}^{\circ}$$ (with sections given by the pullbacks of $s_1, \cdots, s_n$ and the tautological section induced by the diagonal) induces a map $\mathscr{C}^\circ \to \mathscr{M}_{g,n+1}$. 
By the discussion above and \cite[Lemma 2.1.4]{landesmanL:canonical-representations}, the image of the induced map on fundamental groups $\pi_1(\mathscr{C}^\circ)\to \pi_1(\mathscr{M}_{g,n+1})$
is finite index in $\pi_1(\mathscr{M}_{g,n+1})=\on{Mod}_{g,n+1}$. 
By the definition of $\mathscr M$, the image of the induced map is contained in
$\on{Mod}_{\varphi}$, 
as defined as in \autoref{theorem:main-thm-1},
and hence the image has finite index in $\on{Mod}_{\varphi}$.

Unwinding this discussion, the monodromy representation on $R^1q_{1*}\mathbb{C}$
factors through the representation $R_{\varphi}$, with image a finite index
subgroup of the image of $R_{\varphi}$. Thus to understand the image of
$R_\varphi$, it suffices to understand the image of this monodromy
representation associated to $R^1q_{1*}\mathbb{C}$.

 Moreover $q_1$ is itself the pullback of $\pi'$ along $\pi^\circ$, which
 induces a surjection on fundamental groups. Thus the image of the monodromy
 representation associated to
$R^1q_{1*}\mathbb{C}$
is the same 
as that of the monodromy representation associated to $R^1\pi'_*\mathbb{C}$. 
 
 Now let $\rho: H\to GL_r(\mathbb{C})$ be a representation and $\mathbb{V}^\rho$
 the associated local system on $\mathscr{C}^\circ$. A completely analogous
 argument shows that the image of the monodromy representation associated to
 $W_1R^1\pi^\circ_*\mathbb{V}^\rho$ is the same up to finite index as the representation $R_{\varphi, \rho}$ considered in \autoref{theorem:main-thm-2}. 

\section{Review of parabolic bundles and period maps}
\label{section:parabolic-review}
Let $C$ be a smooth projective curve over $\mathbb{C}$, and $D=x_1+\cdots+x_n$ a reduced effective divisor on $C$. 
\begin{definition}
	\label{definition:parabolic-vecor-bundle}
	A \emph{parabolic vector bundle} $E_\star$ on $(C,D)$ is a vector bundle $E$ on $C$, a
decreasing filtration $E_{x_j} = E_j^1 \supsetneq E_j^2 \supsetneq \cdots
\supsetneq E_j^{n_j+1} =
0$ for each $1 \leq j \leq n$, and an increasing sequence of real numbers $0\leq
\alpha_j^1<\alpha_j^2<\cdots<\alpha_j^{n_j}<1$ for each $1 \leq j \leq n$,
referred to as {\em weights}. 
Here, $E_{x_j}$ refers to the fiber of $E$ at $x_j$, and hence the filtration is
merely a filtration of vector spaces on the fiber of $E$ at $x_j$, not a filtration of
the vector bundle $E$.
We use $E_\star = (E, \{E^i_j\}, \{\alpha^i_j\})$ to denote the data of a parabolic bundle. Given a parabolic bundle $E_\star$, we will often write $E_0$ for the underlying vector bundle $E$.
\end{definition}

\subsection{}
\label{subsection:parabolic-definitions}
Parabolic
bundles admit a notion of \emph{parabolic stability}, analogous to the usual
notion of stability for vector bundles, which we next recall. 
First, the {\em parabolic degree} of a parabolic bundle $E_\star$ is
\begin{align*}
	\on{deg}(E_\star) := \deg(E) + \sum_{j=1}^n \sum_{i=1}^{n_j}
	\alpha^i_j \dim(E_j^i/E_j^{i+1}).
\end{align*}
Then, the {\em parabolic slope} is defined by $\mu_\star(E_\star) :=
\on{deg}(E_\star)/\rk(E_\star)$.
Any subbundle $F \subset E$ has an induced parabolic structure $F_\star \subset
E_\star$ defined as
follows:
we take the filtration
over $x_j$ on $F$ is obtained
	from the filtration $$F_{x_j} = E^1_j \cap F_{x_j} \supset E^2_j \cap
	F_{x_j} \supset \cdots \supset E^{n_j+1}_j \cap F_{x_j} = 0$$
	by removing redundancies. For the weight associated to $F^i_j \subset
	F_{x_j}$ one takes $$\max_{\substack{k, 1 \leq k \leq n_j}} \{ \alpha^k_j : F^i_j = E^k_j \cap
	F_{x_j}\}.$$
A parabolic bundle $E_\star$ is {\em parabolically semi-stable} if for every
nonzero subbundle $F \subset E$ with
induced parabolic structure $F_\star$, we have $\mu_\star(F_\star) \leq
\mu_\star(E_\star)$.
Similarly, a 
parabolic bundle $E_\star$ is {\em parabolically stable} if for every
nonzero subbundle $F \subset E$ with
induced parabolic structure $F_\star$, we have $\mu_\star(F_\star) <
\mu_\star(E_\star)$.

We next introduce the notation $E^\rho_\star$ for the parabolic bundle
corresponding to a unitary representation $\rho$.
\begin{notation}
	\label{notation:rep-to-vector-bundle}
	Let $C$ be a curve and $D \subset C$ a divisor.
Recall that under the Mehta-Seshadri correspondence \cite{mehta1980moduli}, there is a bijection
between irreducible unitary representations of $\pi_1(C-D)$ and parabolic degree $0$ stable
parabolic vector bundles on $C$, with parabolic structure along $D$.
Given an irreducible unitary representation $$\rho: \pi_1(C-D)\to U(r)$$ of dimension $r$, we use $E^\rho_\star$ to denote
the parabolic bundle associated to $\rho$. The underlying vector bundle $E^\rho_0$ of $E^\rho_\star$ is the Deligne canonical extension of the flat bundle on $C-D$ associated to $\rho$, i.e.~$E^\rho_0$ carries a flat connection $$\nabla: E^\rho_0\to E^\rho_0\otimes \Omega^1_C(\log D)$$
with monodromy $\rho$ and whose residues have eigenvalues with real parts in $[0,1)$. See \cite[Definition 3.3.1]{landesmanL:geometric-local-systems} for details of how to associate a parabolic structure to a connection.

Such unitary representations will often arise as follows: we will start with a
surjection $\varphi: \pi_1(C-D)\twoheadrightarrow H$ for some finite group $H$.
Then for each irreducible representation $\rho$ of $H$, the representation
$\rho\circ\varphi$ is unitary, and we will abuse notation to denote it by $\rho$ as well. 

Let $D^\rho_{\on{non-triv}} \subset D$ denote the subset of points $p \in D$
so that the local inertia at $p$ under $\rho$ is not the identity.
Set $\Delta$ to be the number of points in $D^\rho_{\on{non-triv}}$.
%Let $m$ denote the maximum order of inertia at any such point, if it is finite. In
%particular, note that if $\rho$ has image a finite group $H$, as above, $m$ is bounded by the maximum order of any element of
%$H$.
\end{notation}

\begin{definition}
Given a parabolic bundle
$E_\star = (E, \{E^i_j\}, \{\alpha^i_j\})$,
let $J \subset \{1, \ldots, n\}$ denote the set of 
integers $j \in \{1, \ldots, n\}$ for which $\alpha^1_j = 0$, and define
\begin{align}
	\label{equation:coparabolic}
\widehat{E}_0 : = \ker( E \to \oplus_{j \in J} E_{x_j}/E_j^2).
\end{align}
(This is a special case of more general notation used for coparabolic bundles
as in \cite[2.2.8]{landesmanL:geometric-local-systems} or the equivalent
\cite[Definition 2.3]{bodenY:moduli-spaces-of-parabolic-higgs-bundles}, but is all we will need for this
paper.)
In particular, $\widehat{E}_0 \subset E$ is a subsheaf. Note that if $E_\star$ is the parabolic bundle associated to a unitary representation of $\pi_1(C-D)$, the natural logarithmic connection on $E_0$ descends to a logarithmic connection on $\widehat E_0$ (albeit with different residues), by a local calculation.
\end{definition}
\begin{proposition}\label{proposition:weight-1-hodge-filtration}\protect{}
	Let $\mathbb{V}$ be a unitary local system on $C - D$, and let
	$E_\star$ be the associated parabolic bundle on $C$. Then there is a
	natural mixed Hodge structure on $H^1(C - D, \mathbb{V})$, and natural
	isomorphisms $$(F^1\cap W_1)H^1(C - D, \mathbb{V})=H^0(C, \widehat E_0\otimes \omega_C(D))$$
	$$W_1H^1(C - D, \mathbb{V})/(F^1\cap W_1)H^1(C - D, \mathbb{V})=H^1(C, E_0).$$
\end{proposition}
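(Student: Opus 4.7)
The plan is to establish the MHS by reduction to Deligne's classical case and to compute the Hodge pieces via an explicit logarithmic de Rham complex on $C$. Since $\mathbb{V}$ has finite monodromy in the paper's setting (factoring through the finite quotient $H$), I would choose a finite Galois cover $f: C' \to C$ on which $f^*\mathbb{V}$ becomes trivial, with $D' := f^{-1}(D)$. Then $H^1(C \setminus D, \mathbb{V})$ is naturally a deck-group-isotypic summand of $H^1(C' \setminus D', \mathbb{C}^{\oplus r})$, and so inherits Deligne's MHS. The identity $W^1 H^1(C \setminus D, \mathbb{V}) = H^1(C, j_*\mathbb{V})$ is equivalent to the standard description of $W^1$ as the image of $H^1_c \to H^1$, since on a curve this image is $H^1(C, j_*\mathbb{V})$; compatibility with the cover makes this identification functorial.

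Next, I would produce an explicit two-term complex on $C$ quasi-isomorphic to $j_*\mathbb{V}$. A local calculation at each puncture $x_j$---using that the residue of $\nabla$ preserves the filtration $E_j^\bullet$ and acts as zero on $E_j^1/E_j^2$ (this being the $\alpha_j^1 = 0$ eigenspace)---shows $\nabla(E_0) \subset \widehat{E}_0 \otimes \omega_C(D) \subset E_0 \otimes \omega_C(D)$, yielding a subcomplex $\mathcal{K}^\bullet := [E_0 \xrightarrow{\nabla} \widehat{E}_0 \otimes \omega_C(D)]$ of the log de Rham complex computing $H^\bullet(C \setminus D, \mathbb{V})$. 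I claim $\mathcal{K}^\bullet$ is quasi-isomorphic to $j_*\mathbb{V}$ as a complex of sheaves on $C$: this is automatic on $C \setminus D$, and on a formal disk around $x_j$ one decomposes $E_0$ into residue eigenspaces $V_j^i$ and computes by hand that the resulting local complex has $\mathcal{H}^0$ equal to $V_j^1 = \mathbb{V}^{I_j} = (j_*\mathbb{V})_{x_j}$ and $\mathcal{H}^{\geq 1} = 0$ (using that $\alpha_j^i > 0$ for $i \geq 2$ makes $d + \alpha_j^i/z$ invertible on formal power series). Consequently $\mathbb{H}^\bullet(C, \mathcal{K}^\bullet) = H^\bullet(C, j_*\mathbb{V})$, and in particular $W^1 H^1(C \setminus D, \mathbb{V}) = \mathbb{H}^1(C, \mathcal{K}^\bullet)$.

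Finally, I would run the hypercohomology spectral sequence for the two-term $\mathcal{K}^\bullet$: the $d_1$ differentials are $H^q(C, E_0) \to H^q(C, \widehat{E}_0 \otimes \omega_C(D))$ induced by $\nabla$. For $q = 0$, this vanishes because global holomorphic sections of $E_0$ are flat---a consequence of the Mehta-Seshadri polystability of $E_\star^\rho$ and its parabolic degree being zero. For $q = 1$, the vanishing is the degeneration at $E_1$ of the parabolic Hodge-to-de-Rham spectral sequence for $\mathcal{K}^\bullet$ on the projective curve $C$, which I would invoke from Timmerscheidt's (or Simpson's) parabolic analog of Deligne's degeneration theorem. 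Granted these, $\mathbb{H}^1(C, \mathcal{K}^\bullet) = H^0(C, \widehat{E}_0 \otimes \omega_C(D)) \oplus H^1(C, E_0)$, and the Hodge filtration inherited from the stupid filtration on $\mathcal{K}^\bullet$ yields the two stated formulas. The main obstacle I expect is verifying the $E_1$-degeneration cleanly in this parabolic setting; as a fallback, one can invoke the parabolic Hodge decomposition of $W^1 H^1$ due to Zucker-Simpson-Timmerscheidt directly.
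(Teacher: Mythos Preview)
The paper's own proof is a two-line citation to \cite[Lemma 3.2 and \S3]{landesmanL:applications-putman-wieland} and \cite[Theorem 4.1.1]{landesmanL:canonical-representations}; you have essentially reconstructed the content of those references, and your outline is correct.

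A few remarks on points that deserve slightly more care than you give them. First, the proposition is stated for arbitrary unitary $\mathbb{V}$, not only those with finite monodromy; your reduction to a finite cover only handles the latter case (which is, admittedly, all the paper uses), so for the general statement you really do need the Timmerscheidt/Zucker fallback you mention. Second, your claim that global holomorphic sections of $E_0$ are flat is correct but not quite a one-liner: a nonzero section generates a saturated sub-line-bundle whose induced parabolic degree is $\geq 0$, hence $=0$ by parabolic semistability, hence a direct summand by polystability; this forces the underlying line bundle to be $\mathscr{O}_C$ with all induced weights zero, so the corresponding rank-one summand of $\mathbb{V}$ has trivial local monodromy and the section is flat by the compact (Narasimhan--Seshadri) case. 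Third, once you invoke $E_1$-degeneration you should also note why the stupid filtration on your $\mathcal{K}^\bullet$ computes the Hodge filtration induced on $W^1H^1$ from $H^1(C\setminus D,\mathbb{V})$: this is because the inclusion $\mathcal{K}^\bullet \hookrightarrow [E_0 \xrightarrow{\nabla} E_0\otimes\omega_C(D)]$ is a map of filtered complexes (for the stupid filtrations) inducing the inclusion $H^1(C,j_*\mathbb{V})\hookrightarrow H^1(C\setminus D,\mathbb{V})$, so the two filtrations are compatible. None of these are genuine gaps, and your identification of $E_1$-degeneration as the crux is exactly right.
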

\begin{proof}
Everything except the last line is 	\cite[Lemma
	3.2]{landesmanL:applications-putman-wieland}. The last line is just
	unwinding definitions, see
	e.g.~\cite[\S3]{landesmanL:applications-putman-wieland} or \cite[Theorem
	4.1.1]{landesmanL:canonical-representations}.
\end{proof}

Now suppose $\pi: \mathscr{C}\to \mathscr{M}$ is a versal family of
punctured $n$-pointed curves, $\pi^\circ:\mathscr{C}^\circ\to \mathscr{M}$ is the associated punctured family, 
and $\mathbb{V}$ a unitary local system on $\mathscr{C}^\circ$. It is well-known
(see \cite[Theorem 4.1.1]{landesmanL:canonical-representations}) that $R^1\pi^\circ_*\mathbb{V}$ carries an admissible complex variation of Hodge structures, with the fibers of $W_1R^1\pi^\circ_*\mathbb{V}$ as described in \autoref{proposition:weight-1-hodge-filtration}. Let $m\in \mathscr{M}$
 be a point, $C=\pi^{-1}(m)$, $C^\circ=(\pi^\circ)^{-1}(m)$, and $D=C - C^\circ$. Let $\rho$ be the monodromy representation associated to $\mathbb{V}|_{C^\circ}$. The derivative of the period map associated to $W_1R^1\pi^\circ_*\mathbb{V}$ is, by \autoref{proposition:weight-1-hodge-filtration}, a map $$dP_m^\rho: T_m\mathscr{M}\to \on{Hom}(H^0(C, \widehat{E}^\rho_0\otimes\omega_C(D)), H^1(C, E^\rho_0)).$$ As $\pi^\circ$ is a versal family, for each $m\in \mathscr{M}$ we have that $T^*_m\mathscr{M}=H^0(\omega_C^{\otimes 2}(D))$, and so by Serre duality we may adjointly obtain a map $$H^0(C, \widehat
	E^\rho_0\otimes \omega_C(D))\otimes H^0(C, (E^\rho_0)^\vee\otimes \omega_C)\to H^0(C, \omega_C^{\otimes 2}(D)).$$ 

	There is another natural map between these vector spaces, which we
	denote by $B^\rho_m$ and describe next.
	There is a bilinear form
	\begin{equation}\label{mini-bilinear-form}  \widehat
	E^\rho_0 \otimes  (E^\rho_0)^\vee \to  \mathcal O_C \end{equation}
	arising from the inclusion $\widehat{E}^\rho_0 \to E^\rho_0$ and the pairing $E^\rho_0 \otimes (E^\rho_0)^\vee\to \mathcal O_C$. 	 
	Twisting this bilinear form by $\omega_C^{\otimes 2}(D)$ yields the map
	of sheaves
	\begin{equation*}  \widehat
	 E^\rho_0\otimes \omega_C(D)\otimes  (E^\rho_0)^\vee\otimes \omega_C
 \to  \omega_C^{\otimes 2}(D). \end{equation*}
 	Taking global sections yields the map
	\begin{equation} 
		\label{equation:pairing-global-sections}
		H^0(C, \widehat
	E^\rho_0\otimes \omega_C(D)\otimes  (E^\rho_0)^\vee\otimes \omega_C)
\to H^0(C, \omega_C^{\otimes 2}(D)). \end{equation}
	Finally, we define
	\begin{align}
		\label{key-bilinear-form}
		B^\rho_m: H^0(C, \widehat
	E^\rho_0\otimes \omega_C(D))\otimes H^0(C, (E^\rho_0)^\vee\otimes \omega_C)\to H^0(C, \omega_C^{\otimes 2}(D))
\end{align}
	to be the composition of the
	multiplication map
	\begin{equation*} H^0(C, \widehat
	E^\rho_0\otimes \omega_C(D))\otimes H^0(C, (E^\rho_0)^\vee\otimes \omega_C)\to H^0(C, \widehat
	E^\rho_0\otimes \omega_C(D)\otimes  (E^\rho_0)^\vee\otimes \omega_C) \end{equation*}
	with \eqref{equation:pairing-global-sections}.
	
	By combining \autoref{proposition:weight-1-hodge-filtration} above with \cite[Theorem
5.1.6]{landesmanL:canonical-representations}, we obtain the following
description of the derivative of the period map.

\begin{proposition}\label{proposition:derivative-of-period-map}
	Let $\pi: \mathscr{C} \to \mathscr{M}$ be a versal family of marked
	curves of genus $g$ as in \autoref{notation:versal-family-of-covers},
	and let $\pi^\circ:\mathscr{C}^\circ\to \mathscr{M}$ be the associated
	punctured family. Fix $m\in \mathscr{M}$ and set $C=\pi^{-1}(m)$,
	$C^\circ=(\pi^\circ)^{-1}(m)$, and $D=C - C^\circ$.
	Let $\mathbb{V}$ be a unitary local system on $\mathscr{C}^\circ$, and let $\rho$ be the monodromy representation of $\mathbb{V}|_{C^\circ}$. Then
	the derivative of the period map associated to $W_1R^1\pi_\ast^\circ
	\mathbb{V}$ is identified with the map $B^\rho_m$ defined in
	\eqref{key-bilinear-form}
%	\begin{equation}\label{key-bilinear-form} B^\rho_m: H^0(C, \widehat
%	E^\rho_0\otimes \omega_C(D))\otimes H^0(C, (E^\rho_0)^\vee\otimes \omega_C)\to H^0(C, \omega_C^{\otimes 2}(D))\end{equation} described above. 
\end{proposition}

The map $dP_m^\rho$ described above gives rise to a number of other maps by adjointness; we will find it convenient to name some of them. 
We have denoted the bilinear pairing of \eqref{key-bilinear-form} by $B^\rho_m$. 
We will denote by $\theta^\rho_m$ the adjoint map 
\begin{align}
	\label{equation:theta-map}
\theta^\rho_m: H^0(C, \widehat{E}_0^\rho\otimes \omega_C(D))\to H^1(C,
E^\rho_0)\otimes H^0(C, \omega_C^{\otimes 2}(D)),
\end{align}
where we identify $H^1(C, E_0)$ with $H^0(C, (E^\rho_0)^\vee\otimes
\omega_C)^\vee$ by Serre duality.
\section{Global generation of vector bundles on generic curves}
\label{section:global-generation}

\subsection{A generalization of the non-ggg lemma} We will need a generalization of the non-ggg ({\bf g}enerically {\bf g}lobally {\bf g}enerated) lemma from \cite[Proposition
6.3.6]{landesmanL:geometric-local-systems} (see also \cite[Proposition
6.3.1]{landesmanL:geometric-local-systems}).
In order to prove this generalization, we will make use of the following lemma, bounding the rank of the global sections of a subbundle of a vector bundle in terms of various numerical invariants.

\begin{lemma}
	\label{lemma:ggg-criterion}
	Let $F$ be a vector bundle on a smooth proper curve $C$ of genus $g$, and let $V \subset
	F$ be a subbundle with $c := \rk F - \rk V$.
	Suppose that $0 = N^0 \subset N^1 \subset \cdots \subset N^k = V$ is the
	Harder Narasimhan filtration of $V$.
	Let $\mu$ be a rational number.
	If
	\begin{enumerate}
		\item $h^0(C, F) - h^0(C, V) = \delta$,
		\item $0\leq \mu(N^i/N^{i-1}) \leq 2g  -2$ for $1 \leq i \leq k$, and
		\item $\mu(F) \geq \mu$,
	\end{enumerate}
	then
	\begin{align*}
		\rk F(2g - 1- \mu) \geq cg - \delta.
	\end{align*}
\end{lemma}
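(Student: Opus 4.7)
The plan is to play a lower bound on $h^0(F)$, from Riemann--Roch, against an upper bound on $h^0(V)$ obtained by applying the vector-bundle Clifford inequality to each step of the Harder--Narasimhan filtration of $V$. The desired inequality will then fall out by subtracting and using $\rk V = \rk F - c$.

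For the lower bound, since $h^1(F) \geq 0$, Riemann--Roch together with the hypothesis $\mu(F) \geq \mu$ gives
\begin{equation*}
h^0(F) \geq \chi(F) = \deg F + \rk F(1-g) \geq \rk F\,(\mu + 1 - g).
\end{equation*}
For the upper bound, the key input I would invoke is the generalized Clifford inequality for vector bundles: if $E$ is a semistable bundle on $C$ with $0 \leq \mu(E) \leq 2g-2$, then $h^0(E) \leq \rk E + \tfrac{1}{2}\deg E$. The hypothesis $0\leq \mu(N^i/N^{i-1}) \leq 2g-2$ is exactly what is needed to apply this to each semistable piece of the HN filtration, and summing along the filtration gives
\begin{equation*}
h^0(V) \leq \sum_{i=1}^k \bigl(\rk(N^i/N^{i-1}) + \tfrac{1}{2}\deg(N^i/N^{i-1})\bigr) = \rk V + \tfrac{1}{2}\deg V \leq g\,\rk V,
\end{equation*}
where the last bound uses $\deg V \leq (2g-2)\,\rk V$, itself a consequence of the slope hypothesis. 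Subtracting,
\begin{equation*}
\delta = h^0(F) - h^0(V) \geq \rk F\,(\mu + 1 - g) - g\,\rk V = \rk F\,(\mu + 1 - 2g) + cg,
\end{equation*}
which rearranges to $\rk F\,(2g - 1 - \mu) \geq cg - \delta$, as required.

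The only substantive ingredient is the vector-bundle Clifford bound; the rest is Riemann--Roch and elementary bookkeeping. I do not expect a real obstacle beyond correctly invoking this classical inequality. The slope hypothesis $0 \leq \mu(N^i/N^{i-1}) \leq 2g-2$ is sharp for the argument, since outside this range Clifford degenerates (either to $h^0 = 0$, or to the Riemann--Roch value $\deg + \rk(1-g)$) and would have to be handled separately; under the stated hypotheses no such splitting into cases is necessary.
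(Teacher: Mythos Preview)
Your proof is correct and is essentially the same as the paper's. The paper cites \cite[Lemma 6.2.1]{landesmanL:geometric-local-systems} for the bound $h^0(V)\leq \rk V+\tfrac12\deg V$ under the HN slope hypothesis, which is exactly the Clifford-type inequality you invoke (summed over the filtration), and then combines it with Riemann--Roch for $F$ and the substitution $\rk V=\rk F-c$ in the same way you do.
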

\begin{proof}
\cite[Lemma 6.2.1]{landesmanL:geometric-local-systems} combined with Riemann-Roch for $F$ give
\begin{align*}
		\frac{ \deg V}{2} +\rk V \geq  h^0(C, V) = h^0(C, F) - \delta \geq \deg F - (g-1) \rk F -
		\delta.
	\end{align*}

	By assumption $(2)$ that $\mu(N^i/N^{i-1}) \leq 2g -2$, we obtain $\mu(V) \leq
	2g - 2$, i.e. $\deg V \leq (2g-2) \rk V$, giving
	\begin{align*}
		g \rk V  \geq \deg F - (g-1) \rk F -
		\delta.
	\end{align*}
	Then, $\rk V= \rk F-c$ and $\mu(F) \geq \mu $ gives
		\begin{align*}
			g (\rk F- c)  = g(\rk V) &\geq \deg F - (g-1) \rk F -
			\delta \\
			&\geq \mu(F) \rk F - (g-1) \rk F - \delta \\
			&\geq \mu \rk F - (g-1) \rk F - \delta.
	\end{align*}
	Simplifying this gives
	\begin{equation}
	\rk F(2g - 1- \mu) \geq cg - \delta.\qedhere
\end{equation}
\end{proof}

\begin{proposition}
	\label{proposition:non-ggg-improved}
		Let $\mu$ be a rational number. Suppose $E$ is a vector bundle on 
	a smooth proper connected genus $g$ curve and any nonzero quotient bundle $E \twoheadrightarrow Q$
	satisfies 
	$\mu(Q) \geq \mu$.
%Here, we even take $\mu$ to be the minimal slope of a graded piece of the HN
%filtration.
		Let $U \subset E$ be a proper subbundle with
		$c := \rk E - \rk U>0$
		and
		$\delta := h^0(C, E) - h^0(C, U)$.
		If $\mu< 2g - 1$ then we have
		$\rk E \geq \frac{c g-\delta}{2g - 1 -\mu}$ while if $\mu \geq
		2g-1$ then we have $\delta \geq cg + \mu - (2g-1)$.
\end{proposition}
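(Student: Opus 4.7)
The plan is to reduce to a setting in which Lemma~\ref{lemma:ggg-criterion} applies with $F = E$, namely to the case where the chosen subbundle $V$ has all Harder--Narasimhan slopes in $[0, 2g-2]$. The reduction proceeds in two steps, both of which preserve or only improve the target inequality $\rk E (2g - 1 - \mu) \geq cg - \delta$.

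First I would eliminate from $U$ those HN pieces of slope exceeding $2g - 2$. Let $N \subset U$ denote the maximal saturated subbundle all of whose HN slopes exceed $2g - 2$; since $U$ is saturated in $E$, so is $N$. The condition $\mu(N) > 2g - 2$ forces $h^1(N) = 0$ by Serre duality, so taking cohomology of the exact sequences $0 \to N \to U \to U/N \to 0$ and $0 \to N \to E \to E/N \to 0$ shows that passing from $(E, U)$ to $(E/N, U/N)$ leaves both $c$ and $\delta$ unchanged. The quotient slope hypothesis descends since every quotient of $E/N$ is a quotient of $E$. As $\rk E$ strictly decreases whenever $N \neq 0$, induction on $\rk E$ lets me assume $U$ has all HN slopes $\leq 2g - 2$. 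I would then replace $U$ by its maximal subbundle $U' \subset U$ whose HN slopes are all $\geq 0$: the complementary HN pieces of $U$ have negative slopes and hence no global sections, so $h^0(U') = h^0(U)$ and $\delta$ is preserved, while $c$ can only grow.

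At this stage, Lemma~\ref{lemma:ggg-criterion} applies with $F = E$ and $V = U'$ (using $\mu(E) \geq \mu$, taking $E$ itself as a quotient) and yields $\rk E (2g - 1 - \mu) \geq c' g - \delta \geq cg - \delta$. When $\mu < 2g - 1$, dividing by $2g - 1 - \mu$ gives the first conclusion. When $\mu \geq 2g - 1$, the inequality rearranges to $\delta \geq cg + \rk E (\mu - (2g - 1))$, and since $N \subsetneq E$ throughout the induction the terminal $\rk E$ is at least $c \geq 1$, so $\delta \geq cg + \mu - (2g - 1)$ as claimed. The main bookkeeping point I expect to need care is the first reduction, where the threshold $2g - 2$ and the Serre-duality vanishing $h^1(N) = 0$ are exactly what is needed to kill high-slope HN pieces without paying in $\delta$.
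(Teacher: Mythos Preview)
Your proposal is correct and follows essentially the same strategy as the paper: quotient out the high-slope part of the Harder--Narasimhan filtration of $U$ (using $h^1(N)=0$ to preserve $\delta$), ensure the remaining HN slopes lie in $[0,2g-2]$, and then invoke Lemma~\ref{lemma:ggg-criterion}. The only cosmetic differences are that the paper performs the two reductions in the opposite order and achieves the lower bound on slopes by first replacing $U$ with the saturation of the image of its global sections (so that $U$, and hence $U/N$, is generically globally generated), whereas you instead pass to the nonnegative-slope part $U'$ of the HN filtration directly; your route is slightly more streamlined here. Note that your ``induction on $\rk E$'' is harmless but unnecessary, since after a single quotient by $N$ the bundle $U/N$ already has all HN slopes $\leq 2g-2$.
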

\begin{proof}
%	The case $c = 0$ holds since in that case we must have $U = E$ and
%	$\delta = 0$.
%	We next check the cases $g = 0$ and $g = 1$.
%	When $g = 0$, the case $\mu < 2g  -1$ amounts to showing $\rk E \geq 0$,
%	which is automatic. When $\mu \geq 2g -1 = -1$, we must have that $E
%	\simeq \oplus_i \mathscr O(a_i)$ with all $a_i \geq -1$.
%	We want to show $\delta \geq \mu + 1$.
%	Since $h^1(C, E) = 0$, we obtain $\delta \geq h^0(C, Q)$
%	and $h^0(C, Q) \geq \mu + 1$
%	using $\mu(Q) \geq \mu$ and $g = 0$.

%	We now assume $c > 0$.
	As a first step, we may reduce to the case $U$ is generically globally 
	generated
	by replacing $U$ with the saturation of the image of $H^0(C, U) \otimes
	\mathscr O_C \to U \to V$.
	%Therefore, the quotient $E/U$ is a vector bundle.
	Next, let $N^i$ denote the 
	largest filtered part of the Harder-Narasimhan
	filtration of $U$, whose associated graded sequence of vector bundles
	all have slopes $>2g-2$.
	Because $U \subset E$ was assumed saturated, and $N^i \subset U$ is
	saturated, the quotient $U/N^i \subset
	E/N^i$ is also a saturated inclusion of vector bundles.
	%This is equivalent to $E^\vee \to U^\vee$ and $U^\vee \to $(N^i)^\vee$
	%being surjections, which implies $E^\vee \to (N^i)^\vee$ is saturated,
	%so $E/N^i$ is a vector bundle.

	Now, let $F := E/N^i$ and let $V := U/N^i$.
	We now wish to apply \autoref{lemma:ggg-criterion} to the subbundle $V
	\subset F$, which will tell us
	\begin{equation}\label{rank-F-times} \rk F(2g - 1- \mu) \geq cg - \delta .\end{equation}
	If we also assume $2g-1>\mu$, 
	dividing both sides of \eqref{rank-F-times} by $2g-1-\mu$ gives our desired inequality
	$\rk E \geq \rk E/N^i \geq \frac{c g-\delta}{2g - 1 -\mu}$.
	
	On the other hand, if $\mu \geq 2g-1$, then rearranging
	\eqref{rank-F-times}, and using that $\rk F \geq 1$, (since $V \subset F$
is a proper subbundle,) gives the claimed inequality
	$$\delta \geq cg - \rk F(2g-1-\mu) = cg + \rk F(\mu - (2g-1)) \geq cg +
\mu - (2g - 1).$$

	To conclude, it remains to verify conditions $(1)$, $(2)$, and $(3)$ of
	\autoref{lemma:ggg-criterion}.

	For condition 
	\autoref{lemma:ggg-criterion}$(1)$, we will show 
	$$\delta = h^0(C, E) -h^0(C, U) = h^0(C, E/N^i) - h^0(C, U/N^i).$$
	Indeed, since $H^1(C, N^i) = 0$ by \cite[Lemma
	6.3.5]{landesmanL:geometric-local-systems},
	we find $h^0(C, E/N^i) = h^0(C, E) - h^0(C, N^i)$ and $h^0(C, U/N^i) =
	h^0(C, U) - h^0(C, N^i)$. This implies $h^0(C, E) -h^0(C, U) = h^0(C,
	E/N^i) - h^0(C, U/N^i)$,
	and the former is $\delta$ by definition.

	Note that condition \autoref{lemma:ggg-criterion}$(3)$
	holds because $F$ is a quotient of $E$, and so $\mu(F) \geq \mu$ by
	assumption.

	Finally, we verify \autoref{lemma:ggg-criterion}$(2)$.
	We wish to show each associated graded piece of the Harder-Narasimhan
	filtration of $V$ has slope between $0$ and $2g-2$.
	The upper bound follows from the construction of $V$ as $U/N^i$.
	We conclude by verifying the lower bound.
	Recall we assumed above that $U$ is generically globally generated.
	Therefore, $V$ is
	generically globally generated as it is a quotient of $U$, and hence any quotient of $V$ is itself generically globally generated, and thus has positive slope. Applying this to the smallest slope associated graded piece of the Harder-Narasimhan filtration 
	 verifies \autoref{lemma:ggg-criterion}$(2)$.
\end{proof}

\begin{lemma}
	\label{lemma:ggg-parabolic-consequence}
	Suppose $E_\star = (E, \{E^i_j\}, \{\alpha^i_j\})$ is a nonzero parabolic
	bundle on  $(C,D)$, where $C$ is a smooth proper connected genus $g$ curve
		and $D = x_1 + \cdots + x_n$ is a reduced effective divisor. Assume ${E}_\star$ is parabolically semistable of
		slope $\mu+n$ with $\mu < 2g -1$.
		Suppose $\widehat E_0$ has a proper subbundle $U \subset \widehat E_0$ with
		$c := \rk \widehat E_0 - \rk U$
		and
		$\delta := h^0(C, \widehat E_0) - h^0(C, U)$.
		Then $\rk E= \rk \widehat E_0 \geq \frac{c g-\delta}{2g - 1 -\mu}$.
\end{lemma}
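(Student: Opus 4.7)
The plan is to apply \autoref{proposition:non-ggg-improved} directly to $\widehat{E}_0$ (in the role of ``$E$'') with the given subbundle $U$ and slope parameter $\mu$. Since $\mu < 2g-1$ by hypothesis, the conclusion of that proposition is exactly $\rk\widehat{E}_0 \geq (cg - \delta)/(2g-1-\mu)$, matching the statement of the lemma. All the content therefore lies in verifying the proposition's hypothesis: that every nonzero quotient bundle $\widehat{E}_0 \twoheadrightarrow Q$ satisfies $\mu(Q) \geq \mu$.

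To verify this, fix such a quotient, set $U' := \ker(\widehat{E}_0 \twoheadrightarrow Q)$, and let $\widetilde{U}' \subset E$ denote its saturation. Since $Q$ is torsion-free, one has $\widetilde{U}' \cap \widehat{E}_0 = U'$, and unwinding the short exact sequence $0 \to Q \to E/U' \to E/\widehat{E}_0 \to 0$ (using the degree shift from saturation) gives the degree identity $\deg Q = \deg(E/\widetilde{U}') - \deg(E/\widehat{E}_0) + \on{length}(\widetilde{U}'/U')$. Meanwhile, parabolic semistability of $E_\star$, applied to the saturated sub-parabolic $\widetilde{U}'_\star \subset E_\star$, yields $\deg(E/\widetilde{U}') \geq (\mu+n)\rk Q - P$, where $P$ is the parabolic contribution of the induced parabolic structure on $(E/\widetilde{U}')_\star$ (so that $\on{par-deg}=\deg+P$). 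Combining these, the desired bound $\mu(Q) \geq \mu$ reduces to showing $P + \deg(E/\widehat{E}_0) - \on{length}(\widetilde{U}'/U') \leq n \cdot \rk Q$.

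The key step is a pointwise analysis at each $x_j \in J$ (the set of indices with $\alpha_j^1 = 0$). Write $d_j := \dim(E_j^1/E_j^2)$ and $\widetilde u_{2,j} := \dim(\widetilde{U}'_{x_j} \cap E_j^2)$, and set $\delta_j := d_j + \widetilde u_{2,j} - \rk \widetilde{U}'$; a dimension count inside $E_{x_j}$ (bounding $\dim(\widetilde{U}'_{x_j}+E_j^2) \leq \dim E_{x_j}$) gives $\delta_j \geq 0$. I would then verify two facts: (a) the weight-$0$ subspace in the induced parabolic filtration on $(E/\widetilde{U}')_{x_j}$ has dimension exactly $\delta_j$; and (b) the $x_j$-contribution to $\deg(E/\widehat{E}_0) - \on{length}(\widetilde{U}'/U')$ is likewise $\delta_j$, since at $x_j \in J$ the former equals $d_j$ and the latter equals $\rk \widetilde{U}' - \widetilde u_{2,j}$. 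Because all parabolic weights lie in $[0,1)$, and the weight-$0$ piece at $x_j \in J$ contributes nothing, the local parabolic contribution to $P$ at $x_j \in J$ is at most $\rk Q - \delta_j$, and at $x_j \notin J$ at most $\rk Q$. Summing over the $n$ marked points gives $P \leq n\cdot \rk Q - \sum_{j\in J}\delta_j$, which combined with (b) is precisely the needed inequality.

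The only subtlety is this local matching at $J$: the ``$+n$'' in the parabolic slope hypothesis is exactly what the $n$ weight contributions at marked points can absorb, and the ``new'' weight-$0$ dimensions $\delta_j$ at points of $J$ exactly offset the discrepancy between $\deg(E/\widehat{E}_0)$ and $\on{length}(\widetilde{U}'/U')$. Everything else is bookkeeping, and once $\mu(Q) \geq \mu$ holds for every quotient, \autoref{proposition:non-ggg-improved} finishes the proof.
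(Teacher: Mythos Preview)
Your proposal is correct and follows the same strategy as the paper: verify that every nonzero quotient bundle of $\widehat{E}_0$ has slope at least $\mu$, then invoke \autoref{proposition:non-ggg-improved}. The only difference is that the paper obtains the quotient-slope bound by citing \cite[Lemma 6.3.4]{landesmanL:geometric-local-systems}, whereas you reprove that lemma directly via your local analysis at the points of $J$; one small imprecision is that in (a) the weight-$0$ graded piece of $(E/\widetilde{U}')_{x_j}$ has dimension \emph{at least} $\delta_j$ rather than exactly $\delta_j$, but this is all that is needed for the inequality $P_j \le \rk Q - \delta_j$.
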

\begin{proof}
	Note that 
	\cite[Lemma 6.3.4]{landesmanL:geometric-local-systems} implies any
	quotient of vector bundles $\widehat E_0 \twoheadrightarrow Q$ satisfies $\mu(Q)
	\geq \mu$.
	We may therefore conclude by applying \autoref{proposition:non-ggg-improved} to the vector bundle
	$\widehat{E}_0$.
%This cannot be improved with finite monodromy because e.g. we could have
%trivial monodromy, and then the result is fairly sharp.
\end{proof}

\begin{lemma}
	\label{lemma:dual-hat}
	The bundle $E^{\rho^\vee}_0$ is semistable if and only if $\widehat{E^\rho_0}$
	is semistable. 
	The bundle $E^{\rho^\vee}_0$ is stable if and only if $\widehat{E^\rho_0}$
	is stable. 
\end{lemma}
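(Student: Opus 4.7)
My plan is to reduce the lemma to constructing a single isomorphism of vector bundles
\[
E^{\rho^\vee}_0 \cong (\widehat{E^{\rho}_0})^\vee \otimes \mathcal{O}_C(-D).
\]
Granting this, both equivalences in the lemma follow at once, since tensoring by a line bundle and taking the dual each preserve both semistability and stability of vector bundles.

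The main work is in the isomorphism itself, and the key input is a local residue computation. As the excerpt already notes, the Deligne canonical extension connection on $E^\rho_0$ descends to a logarithmic connection on $\widehat{E^\rho_0}$. My plan is to check that its residue at each $x_j \in D$ lies in $(0, 1]$: on eigenspace factors of nonzero weight the bundle is unchanged from $E^\rho_0$, so the residue remains in $(0,1)$; on a zero-weight factor the elementary modification replaces a local generator $s$ by $ts$, where $t$ is a local parameter at $x_j$, and the short computation
\[
\nabla(ts) \;=\; s\, dt + t\, \nabla s \;=\; (ts)\cdot \tfrac{dt}{t} + t\, \nabla s
\]
shifts the residue from $0$ to $1$.

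Dualizing then negates each residue, producing a log connection on $(\widehat{E^\rho_0})^\vee$ with residues in $[-1, 0)$, and the mirror-image local calculation shows that twisting by $\mathcal{O}_C(-D)$ shifts each residue by $+1$. Consequently $(\widehat{E^\rho_0})^\vee \otimes \mathcal{O}_C(-D)$ carries a logarithmic connection with residues in $[0,1)$ extending the local system $\rho^\vee$ on $C \setminus D$. By the uniqueness of the Deligne canonical extension this bundle-with-connection is canonically $E^{\rho^\vee}_0$, which yields the desired isomorphism and hence the lemma. The only nontrivial input is the elementary residue calculation above, so I do not expect any serious obstacle to this strategy.
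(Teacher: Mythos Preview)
Your proposal is correct and follows essentially the same route as the paper: both reduce the lemma to the single vector-bundle isomorphism $(\widehat{E^\rho_0})^\vee\otimes\mathcal{O}_C(-D)\cong E^{\rho^\vee}_0$ (equivalently $\widehat{E^\rho_0}(D)\cong (E^{\rho^\vee}_0)^\vee$) and then invoke that dualizing and twisting by a line bundle preserve (semi)stability. The only difference is that the paper cites this isomorphism from \cite[(3.1)]{yokogawa:infinitesimal-deformation}, whereas you re-derive it via the residue computation and uniqueness of the Deligne canonical extension; your local calculation is correct and makes the argument self-contained.
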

\begin{proof}
	Stability of $\widehat{E}^\rho_0$ is
	equivalent to
	stability of $\widehat{E}^\rho_0(D)$. In turn, this is
	equivalent to stability of $(E^{\rho^\vee}_0)^\vee$
	by \cite[(3.1)]{yokogawa:infinitesimal-deformation},
	which shows
	$\widehat{E}^\rho_0(D) \simeq (E^{\rho^\vee}_0)^\vee$.
	Finally, stability of $(E^{\rho^\vee}_0)^\vee$
	is equivalent to stability of $E^{\rho^\vee}_0$.
	The same holds with semistability in place of stability.
\end{proof}
\subsection{Global generation and deformation theory}
In this section, we deduce 
\autoref{proposition:globally-generated}
from \autoref{lemma:ggg-parabolic-consequence}
by passing to a generic curve. We noe that
\autoref{proposition:globally-generated}
is a result  about \emph{global
generation} of vector bundles, as opposed to just generic global generation.

Suppose $C$ is a curve, $D$ is a reduced effective divisor in $C$, and $E$ is a vector bundle on $C$. Below we denote by $\on{At}_{(C,D)}(E)$ the preimage of $T_C(-D)$ under the natural map $\on{At}_C(E)\to T_C$, where $\on{At}_C(E)$ is the Atiyah bundle of $E$.
For some background on Atiyah bundles relevant to the context of this paper, see
\cite[\S3]{landesmanL:geometric-local-systems}.
We begin by recalling a couple of general facts from deformation theory relating
to the Atiyah bundle.
\begin{lemma}\label{lemma:atiyah-defs}
	Let $C$ be a smooth projective curve over a field $k$, and $D$ a reduced
	effective divisor on $C$. Let $E$ be a vector bundle on $C$. Then the
	space of first-order deformations of the triple $(C,D,E)$ is naturally
	in bijection with the first cohomology of the Atiyah bundle, $H^1(C, \on{At}_{(C,D)}(E))$. 
\end{lemma}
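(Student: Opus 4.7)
The plan is to identify first-order deformations of $(C,D,E)$ with $\check H^1$ of the sheaf of infinitesimal automorphisms of such a triple, and then observe that this sheaf is $\on{At}_{(C,D)}(E)$. First I would note that since $C$ is smooth and $E$ is locally free, the triple $(C,D,E)$ is \emph{locally trivial} in the sense that on any affine open $U\subset C$, there is a unique-up-to-isomorphism first-order deformation of $(U, D\cap U, E|_U)$, namely the trivial one $U\times_k\Spec k[\epsilon]/\epsilon^2$ with its constant divisor and pulled-back bundle. Thus, given any Čech open cover $\{U_i\}$, an arbitrary deformation is obtained by gluing the trivial local ones along transition automorphisms reducing to the identity modulo $\epsilon$, and isomorphism classes of deformations are in bijection with the first Čech cohomology of the sheaf $\mathcal{A}$ of such automorphisms.

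Next I would identify $\mathcal{A}\simeq \on{At}_{(C,D)}(E)$. An infinitesimal automorphism of the trivial deformation consists of (i) a derivation of $\mathcal{O}_C$ preserving the ideal of $D$ (i.e.\ a section of $T_C(-D)$) together with (ii) a compatible lift to a first-order differential operator on $E$ with that symbol. Unwinding the definition of the Atiyah algebra, the sheaf of pairs $(i)$+$(ii)$ is precisely the preimage of $T_C(-D)\subset T_C$ under the symbol map $\on{At}_C(E)\to T_C$, which is $\on{At}_{(C,D)}(E)$ by definition. The kernel $\on{End}(E)$ of the symbol map corresponds to the (trivial-symbol) automorphisms that keep $(C,D)$ fixed and only modify the identification of $E$, which is consistent with the fact that deformations of $E$ with $(C,D)$ fixed are classified by $H^1(C,\on{End}(E))$.

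Combining these two steps yields a bijection between first-order deformations of $(C,D,E)$ and $\check H^1(\{U_i\},\on{At}_{(C,D)}(E))$; taking the direct limit over refinements produces $H^1(C,\on{At}_{(C,D)}(E))$. The naturality of the bijection follows because each step is functorial: local trivializations are unique up to automorphism, and the sheaf identification $\mathcal{A}\simeq \on{At}_{(C,D)}(E)$ is canonical.

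The main obstacle is the sheaf identification in the second step: one must check that the symbol of an infinitesimal automorphism of $(U\times\Spec k[\epsilon], D\times\Spec k[\epsilon], E\boxtimes k[\epsilon])$ really lies in $T_C(-D)$ (rather than merely in $T_C$), and conversely that every section of $\on{At}_{(C,D)}(E)$ integrates to such an automorphism. The first is the observation that preserving $D$ as a subscheme of the deformation forces the symbol derivation to annihilate the ideal of $D$; the second is a direct local computation using that sections of $\on{At}_C(E)$ exponentiate to automorphisms of the trivially deformed bundle. Once this identification is in hand, the rest is a standard application of the patching/cocycle description of deformations.
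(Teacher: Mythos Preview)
Your proposal is correct and is precisely the standard argument: identify first-order deformations with $\check{H}^1$ of the sheaf of infinitesimal automorphisms via local triviality, then identify that sheaf with $\on{At}_{(C,D)}(E)$ by unwinding the definition. The paper's own proof simply cites references (\cite[Proposition 3.5.5]{landesmanL:geometric-local-systems} and \cite[Theorem 3.3.11]{sernesi2007deformations}) where exactly this argument is carried out, so you have supplied what the references contain.
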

\begin{proof}
See \cite[Proposition 3.5.5]{landesmanL:geometric-local-systems} and the references therein, or see \cite[Theorem 3.3.11]{sernesi2007deformations} for a proof in the case where $E$ is a line bundle and $D$ is empty; the general case is identical.
\end{proof}
\begin{lemma}\label{lemma:atiyah-vanishing-h1}
	With notation as in \autoref{lemma:atiyah-defs}, let $s\in\Gamma(C, E)$ be a global section. Let $$\psi: \on{At}_{(C,D)}(E)\to E$$ be the map sending a differential operator $\xi$ to $\xi(s)$. Then given $\nu\in H^1(C, \on{At}_{(C,D)}(E))$, corresponding to some first order deformation $(\mathscr{C}, \mathscr{D}, \mathscr{E})$ over $k[\epsilon]/\epsilon^2$, the section $s$ extends to a global section of $\mathscr{E}$ if and only if $$\psi(\nu)\in H^1(C, E)$$ vanishes. 
	
	In particular, if $E$ carries a flat connection $\nabla$ with logarithmic singularities (equivalently, the natural map $\on{At}_{(C,D)}(E)\to T_C(-D)$ is equipped with a section $q^\nabla$) then $s$ extends to a first order neighborhood in the universal isomonodromic deformation of $E$ if and only if the map $$\psi\circ q^\nabla: T_C(-D)\to E$$ induces the zero map $$H^1(C, T_C(-D))\to H^1(C, E).$$
\end{lemma}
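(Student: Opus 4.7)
The plan is to proceed by a direct \v{C}ech cohomology computation, unwinding the definition of the Atiyah bundle as parametrizing first-order differential operators. Recall that $\on{At}_{(C,D)}(E)$ sits in a short exact sequence
\begin{equation*}
0 \to \on{End}(E) \to \on{At}_{(C,D)}(E) \to T_C(-D) \to 0,
\end{equation*}
and, via \autoref{lemma:atiyah-defs}, a class $\nu \in H^1(C, \on{At}_{(C,D)}(E))$ represents a first-order deformation $(\mathscr{C},\mathscr{D},\mathscr{E})$. I would fix an affine cover $\{U_i\}$ of $C$ compatible with $D$ and trivializing $E$, and represent $\nu$ by a \v{C}ech $1$-cocycle $\{\nu_{ij}\}$ whose components are first-order differential operators on $E$, preserving $E(-D)$ in the appropriate sense. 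The transition functions for $\mathscr{E}$ over $k[\epsilon]/\epsilon^2$ are then $\on{id} + \epsilon \nu_{ij}$, acting on sections.

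First, I would write a proposed lift of $s$ as $\tilde s_i = s + \epsilon t_i$ on each $U_i$, where $t_i \in \Gamma(U_i, E)$. The gluing condition $\tilde s_i = (\on{id} + \epsilon \nu_{ij})(\tilde s_j)$ reduces, modulo $\epsilon^2$, to $t_i - t_j = \nu_{ij}(s) = \psi(\nu_{ij})$. Thus a lift exists if and only if the \v{C}ech $1$-cocycle $\{\psi(\nu_{ij})\}$ with values in $E$ is a coboundary, i.e.\ if and only if $\psi_*(\nu) = 0$ in $H^1(C,E)$. This is exactly the first claim.

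For the ``in particular'' part, I would simply compose. A flat logarithmic connection on $E$ is the same data as a splitting $q^\nabla: T_C(-D) \to \on{At}_{(C,D)}(E)$ of the Atiyah sequence, and the universal isomonodromic deformation of $(C,D,E,\nabla)$ over $k[\epsilon]/\epsilon^2$ is obtained by taking the first-order deformation whose class in $H^1(C,\on{At}_{(C,D)}(E))$ lies in the image of $q^\nabla_*: H^1(C,T_C(-D)) \to H^1(C,\on{At}_{(C,D)}(E))$. Applying the first part to the deformation corresponding to $q^\nabla_*(\mu)$ for $\mu \in H^1(C, T_C(-D))$, the obstruction becomes $(\psi \circ q^\nabla)_*(\mu) \in H^1(C,E)$, so $s$ extends over every first-order direction in the isomonodromic family if and only if $\psi \circ q^\nabla$ induces the zero map on $H^1$.

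There is no real obstacle here; the only subtle point is correctly bookkeeping the action of the Atiyah bundle on sections, particularly near $D$, to confirm that the \v{C}ech description really does take values in $E$ rather than in some twist of it. This follows from the definition $\on{At}_{(C,D)}(E) = $ preimage of $T_C(-D)$, which forces the symbol of each $\nu_{ij}$ to vanish along $D$, so that $\nu_{ij}(s)$ is a well-defined section of $E$ (not merely of $E(D)$). Once this check is made, the argument is purely formal.
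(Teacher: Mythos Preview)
Your proof is correct and follows essentially the same approach as the paper. The paper simply cites \cite[Proposition 3.3.14]{sernesi2007deformations} for the first assertion (your \v{C}ech computation is precisely what that reference does in the line bundle case, and your observation that it works verbatim for higher rank with the divisor $D$ is what the paper means by ``the general case is identical''), and for the second assertion both you and the paper identify the isomonodromic deformation class as $q^\nabla_*(\mu)$ and apply the first part. Your write-up is more self-contained; the only minor wrinkle is the throwaway phrase ``preserving $E(-D)$ in the appropriate sense,'' which is not quite the right characterization of $\on{At}_{(C,D)}(E)$ and is in any case not used---what you actually need, and correctly verify at the end, is that the symbol lies in $T_C(-D)$, so that $\nu_{ij}(s)$ lands in $E$.
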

\begin{proof}
For the first paragraph, see \cite[Proposition 3.3.14]{sernesi2007deformations} for the case where $E$ is a line bundle and $D$ is empty; the general case is identical. For the second, fix $v\in H^1(C, T_C(-D))$, corresponding to a first-order deformation of $(C, D)$. The element $q^\nabla(v)\in H^1(\on{At}_{(C,D)}(E))$ corresponds to, by \cite[Proposition 3.5.7]{landesmanL:geometric-local-systems}, the first-order isomonodromic deformation of $E$ in the direction $v$. Thus by the first paragraph, $\psi\circ q^\nabla(v)=0$; as $v$ was arbitrary, this completes the proof.
\end{proof}

\begin{proposition}
	\label{proposition:globally-generated}
	Fix a unitary representation $\rho: \pi_1(\Sigma_{g,n}) \to \on{GL}_r(\mathbb
	C)$.
	Let
	$(C, D)$ be a general $n$-pointed curve of genus $g \geq 2+2r$.
	Then $\widehat{E}^\rho_0 \otimes \omega_C(D)$ is not only generically globally
	generated, but even globally generated.
\end{proposition}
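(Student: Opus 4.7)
The plan is to combine the generic global generation provided by \autoref{lemma:ggg-parabolic-consequence} with a deformation-theoretic upgrade based on \autoref{lemma:atiyah-vanishing-h1}.

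\emph{Step 1: Generic global generation.} First, I would establish that $F := \widehat{E}^\rho_0 \otimes \omega_C(D)$ is generically globally generated on any $(C,D)$. Since $\rho$ is unitary, by Mehta--Seshadri $E^\rho_\star$ is parabolically polystable of parabolic slope $0$, and so $E^\rho_\star \otimes \omega_C(D)$ is parabolically semistable of parabolic slope $2g - 2 + n$; in the notation of \autoref{lemma:ggg-parabolic-consequence} this is $\mu = 2g - 2 < 2g - 1$, so $2g - 1 - \mu = 1$. Noting that $\widehat{(E^\rho_\star \otimes \omega_C(D))}_0 = F$, I would take $U$ to be the saturation of the image of $H^0(C, F) \otimes \mathscr{O}_C \to F$, yielding $\delta = 0$; if $U$ were a proper subbundle the lemma would force $r = \rk F \geq g$, contradicting $g \geq 2 + 2r$. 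Hence $U$ has full rank and $F$ is generically globally generated.

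\emph{Step 2: Deformation-theoretic upgrade.} To upgrade to global generation at every $p \in C$ for a general $(C, D)$, I argue by contradiction, assuming $F$ fails to be globally generated at some point $p$ for a general $(C, D)$. By Serre duality combined with \autoref{lemma:dual-hat} (which yields $(\widehat{E}^\rho_0)^\vee \simeq E^{\rho^\vee}_0(D)$), this failure is equivalent to the existence of a section $s \in H^0(C, E^{\rho^\vee}_0(p))$ whose image in $E^{\rho^\vee}_0(p)|_p$ is nonzero---that is, a meromorphic section of $E^{\rho^\vee}_0$ with a genuine simple pole at $p$. I would then apply \autoref{lemma:atiyah-vanishing-h1} to $E^{\rho^\vee}_0$, now equipped with the canonical flat logarithmic connection $\nabla^{\rho^\vee}$ on the divisor $D + p$, and to the section $s$: extension of $s$ to the first-order isomonodromic deformation in a direction $v \in H^1(C, T_C(-D-p))$ is equivalent to $(\psi \circ q^{\nabla^{\rho^\vee}})(v) = 0$ in $H^1(C, E^{\rho^\vee}_0(p))$.

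\emph{Step 3: Deriving the contradiction.} By Step~1 the failure locus $Z \subset \mathscr{M}_{g, n+1}$ is a proper subvariety; we wish to rule out that $Z$ dominates $\mathscr{M}_{g, n}$, and combining these gives that $Z$ is at worst a divisor dominating $\mathscr{M}_{g, n}$. The family of sections $s$ over $Z$ would then extend along the tangent directions of $Z$, forcing the obstruction map $\psi \circ q^{\nabla^{\rho^\vee}} : H^1(C, T_C(-D-p)) \to H^1(C, E^{\rho^\vee}_0(p))$ to have rank at most one. The main obstacle is ruling out this constrained obstruction: unwinding $\psi \circ q^{\nabla^{\rho^\vee}}$ as the map of sheaves $v \mapsto \nabla^{\rho^\vee}_v(s)$, and matching it (via Serre duality and the derivative-of-period-map description in \autoref{proposition:derivative-of-period-map}) with the evaluation map whose surjectivity governs global generation, one should find that such low rank produces a proper subsheaf of $F$ carrying all global sections, contradicting Step~1 once the Petri-style dimension estimates kick in under $g \geq 2 + 2r$.
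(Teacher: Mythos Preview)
Your overall architecture matches the paper's: establish generic global generation via \autoref{lemma:ggg-parabolic-consequence}, then use \autoref{lemma:atiyah-vanishing-h1} to show that a persistent failure of global generation on a general curve forces the obstruction map $H^1(C,T_C(-D'))\to H^1(C,E_0^{\rho^\vee}(p))$ (with $D'=D$ or $D+p$ depending on whether $p\in D$) to have rank at most $1$. Up through that point you are essentially reproducing the paper's argument, modulo two minor omissions: you do not first reduce to irreducible nontrivial $\rho$ (needed so that $H^0(E_0^{\rho^\vee})=0$ and your ``genuine pole'' claim holds), and you do not treat the case $p\in D$.

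The gap is in the final sentence of Step~3. The contradiction is \emph{not} obtained by producing a proper subsheaf of $F=\widehat E_0^\rho\otimes\omega_C(D)$ carrying all of $H^0(F)$, and the reference to \autoref{proposition:derivative-of-period-map} is a red herring: the map $v\mapsto\nabla_v s$ is not the period-map derivative, and its Serre dual lands in $H^0(\omega_C^{\otimes 2}(D'))$, not in an evaluation map governing global generation of $F$. What the paper actually does is apply \autoref{lemma:ggg-parabolic-consequence} a \emph{second} time, now to the parabolic bundle $E^\rho_\star\otimes\omega_C(D-p)$, which has parabolic slope $(2g-3)+n$, so $\mu=2g-3$ and $2g-1-\mu=2$. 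Taking $U$ to be the kernel of the Serre-dual sheaf map $\widehat E_0^\rho\otimes\omega_C(D-p)\to\omega_C^{\otimes 2}(D')$ and letting $\delta$ be the rank of the induced map on $H^0$, the lemma gives $r\geq (g-\delta)/2$, i.e.\ $\delta\geq g-2r\geq 2$. This contradicts $\delta\leq 1$. Your phrase ``Petri-style dimension estimates kick in under $g\geq 2+2r$'' is pointing in the right direction, but you need to name the bundle, the value of $\mu$, and the resulting inequality; without that, Step~3 does not close.
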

\begin{proof}
It suffices to consider the case that $\rho$ is irreducible and non-trivial (as
$\omega_C$ is globally generated). Letting $\mathbb V^\rho$ denote the local
system on $C^\circ$ associated to $\rho$, we may assume that $H^1(C, \widehat
E^\rho_0\otimes \omega_C(D))=0$, as this is dual to $H^0(C,
E^{\rho^\vee}_0)=H^0(C^\circ, \mathbb V^{\rho^\vee})$. 

Suppose that the theorem is false, so that for a general $n$-pointed curve
$(C,D)$, $\widehat{E}^\rho_0 \otimes \omega_C(D)$ is not globally generated. Equivalently, there exists $p\in C$ such that $H^1(C,\widehat{E}^\rho_0 \otimes \omega_C(D-p))\neq 0$. Serre-dually, $H^0(C, E_0^{\rho^\vee}(p))$ is non-zero. As $(C,D)$ is general, we may assume that there exists a section $s\in H^0(C, E_0^{\rho^\vee}(p))$ such that for every first-order deformation $(\widetilde C, \widetilde D)$ of $(C,D)$, there exists a first-order deformation $\widetilde p$ of $p$ to $\widetilde C$ so that $s$ survives to $H^0(\widetilde C, \widetilde E_0^{\rho^\vee}(\tilde p)),$ where $\widetilde E_0^{\rho^\vee}$ is the isomonodromic deformation of $E_0^{\rho^\vee}$ to $(\widetilde C, \widetilde D)$. Note that if $p\in D$ but $\tilde p\not\subset \widetilde{D}$, then we may, by passing to a different general curve $(C', D')$, assume that $p\not\in D$. Thus we can and do assume in what follows that if $p\in D$, then $\tilde p\subset \widetilde D$ for all first-order deformations $(\widetilde C, \widetilde D)$ of $(C,D)$.

There are two cases, depending on whether or not $p\in D$; we set up notation to handle them simultaneously. If $p\in D$, we set $D'=D$; otherwise, we set $D'=D+p$. The bundle $\mathscr{O}(p)$ has a natural connection on it with regular singularities at $p$ and trivial monodromy, and with residue $-1$ at $p$. We give $E^{\rho^\vee}_0(p)$ the tensor product connection, which has regular singularities along $D'$.

First order deformations of $(C,D)$ are parameterized by $H^1(C, T_C(-D))$, and first order deformations of $(C, D')$ are parameterized by $H^1(C, T_C(-D'))$. By \autoref{lemma:atiyah-vanishing-h1}, applied to $(C, D')$ and the vector bundle $E^{\rho^\vee}_0(p)$,
a section $s\in H^0(C, E_0^{\rho^\vee}(p))$ extends to the first-order
deformation parameterized by $\eta\in H^1(C, T_C(-D'))$ if and only if the image of $\eta$
under the map $$H^1(C, T_C(-D'))\overset{H^1(\nabla s)}{\longrightarrow}
H^1(E_0^{\rho^\vee}(p))$$ is zero, where $\nabla s: T_C(-D')\to E_0^{\rho^\vee}(p)$
is the map sending a vector field $X$ to $\nabla_Xs$, the derivative of $s$ with
respect to the natural connection on $E_0^{\rho^\vee}(p)$ with regular
singularities along $D'$, described in the previous paragraph. 
%Here this connection has monodromy $\rho^\vee$, and
%the additive inverse of the identity as its residue matrix at $p$. 
Note that $\nabla s$ is non-zero, as if it
were zero, $s$ would be flat, which is impossible as the monodromy
representation $\rho^\vee$ is non-trivial.

Thus it suffices to show that $$\ker(H^1(C, T_C(-D'))\overset{H^1(\nabla s)}{\longrightarrow} H^1(E_0^{\rho^\vee}(p)))$$ does not surject onto $H^1(C, T_C(-D))$ under the natural map induced by the inclusion $T_C(-D')\to T_C(-D)$.

Since $\deg D' \geq \deg D - 1$, it is enough to show that the rank of the map
induced by $\nabla s$ on $H^1$ is at least $2$, or equivalently that the Serre
dual map $$\widehat E_0^\rho\otimes \omega_C(D-p)\to \omega_C^{\otimes 2}(D')$$
induces a map of rank at least $2$ on $H^0$. Setting $U$ to be the kernel of
this map, $\mu:=2g-3$, and $\delta$ to be the rank of the map $$H^0(\widehat
E_0^\rho\otimes \omega_C(D-p))\to H^0(\omega_C^{\otimes 2}(D')),$$ we have by \autoref{lemma:ggg-parabolic-consequence} that $$r\geq \frac{g-\delta}{2},$$ and hence that $\delta\geq g-2r.$ Hence $\delta\geq 2$ as long as $g-2r\geq 2$, which holds by assumption.
\end{proof}

\section{Preliminaries on variations of Hodge structure}
\label{section:vhs}
In this section, we freely use the terminology of Tannakian categories. For
background, we suggest the reader consult
\cite[II]{deligneM:tannakian-categories}.

Let $Y$ be a smooth variety over $\mathbb C$ and $x$ a point of $Y$. 
\begin{definition}[Integral $K$-VHS]
	Let $K$ be a number field. A $K$-variation of Hodge structures on $Y$ is a $K$-local system $\mathbb{V}$ on $Y$ equipped with a decreasing filtration $F^\bullet$ on $W_{K/\mathbb{Q}}\mathbb{V}\otimes \mathscr{O}_Y$, where $W_{K/\mathbb{Q}}$ is the Weil restriction, turning $W_{K/\mathbb{Q}}\mathbb{V}$ into a polarizable $\mathbb{Q}$-variation of Hodge structure so that the natural action of $K$ is via morphisms of variations of Hodge structure. We say that a $K$-variation $\mathbb{V}$ is \emph{integral} if there exists a locally constant sheaf of locally-free $\mathscr{O}_K$-modules $\mathbb{W}$ on $Y$ and an isomorphism $\mathbb{W}\otimes_{\mathscr{O}_K} K\simeq \mathbb{V}$. 
	
	We denote by $\on{VHS}(Y, K)$ the neutral Tannakian category of
	semisimple $K$-variations of mixed Hodge structure (i.e.~direct sums of irreducible pure
	$K$-variations of Hodge structure),
	and by $\on{VHS}(Y, \mathscr{O}_K)$ the full (neutral Tannakian) sub-category consisting of direct sums of integral pure $K$-variations of Hodge structure. Note that $\on{VHS}(Y, \mathscr{O}_K)$ is a $K$-linear category, not an $\mathscr{O}_K$-linear category. We equip this category with the fiber functor sending a local system to its fiber at $x$.
	
	If $\mathbb{V}$ is a $\mathbb{Q}$-VHS, the \emph{generic Mumford-Tate
	group} of $\mathbb{V}$ is the identity component of the Tannakian group
	associated to the full subcategory of $\on{VHS}(Y, \mathbb{Q})$
	generated by $\mathbb{V}$ (see e.g.~\cite[Penultimate paragraph of \S4.1]{moonen2017families}
	for more discussion and a comparison to other definitions of the generic Mumford-Tate group; in particular, if $x$ is very general, there is a canonical isomorphism between the generic Mumford-Tate group and the Mumford-Tate group of $\mathbb{V}_x$).
	
	For any local system $\mathbb{V}$  on a variety  $Y$, with associated monodromy representation $\rho: \pi_1(Y, y)\to \on{GL}(\mathbb{V}_y)$, the \emph{algebraic monodromy group} of $\mathbb{V}$ is the identity component of the Zariski closure of the image of $\rho$.
\end{definition}
\begin{lemma}
	Let $\mathbb{V}$ be an integral $K$-variation of Hodge structure on a smooth variety $Y$. Then the algebraic monodromy group of $\mathbb{V}$ is a normal subgroup of the derived subgroup of the generic Mumford-Tate group of the Weil restriction $W_{K/\mathbb{Q}}\mathbb{V}$.
\end{lemma}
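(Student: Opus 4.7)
The plan is to reduce the statement to the classical theorem of Deligne (in the pure case), which asserts that for an integral polarizable $\mathbb{Q}$-VHS the connected algebraic monodromy group is a normal subgroup of the derived subgroup of the generic Mumford-Tate group. This is essentially an application of the ``theorem of the fixed part'' together with the observation that monodromy-invariant subspaces of an integral VHS are Hodge substructures.

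First I would verify that $W_{K/\mathbb{Q}}\mathbb{V}$ is itself an integral $\mathbb{Q}$-VHS. By hypothesis there is a locally constant sheaf $\mathbb{W}$ of locally free $\mathscr{O}_K$-modules with $\mathbb{W}\otimes_{\mathscr{O}_K}K \simeq \mathbb{V}$; applying restriction of scalars along $\mathscr{O}_K / \mathbb{Z}$ produces a locally constant sheaf of free $\mathbb{Z}$-modules $W_{\mathscr{O}_K/\mathbb{Z}}\mathbb{W}$ with $W_{\mathscr{O}_K/\mathbb{Z}}\mathbb{W}\otimes_{\mathbb{Z}}\mathbb{Q}\simeq W_{K/\mathbb{Q}}\mathbb{V}$, so $W_{K/\mathbb{Q}}\mathbb{V}$ is integral. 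Polarizability and purity of $W_{K/\mathbb{Q}}\mathbb{V}$ are built into the definition of a $K$-VHS.

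Next I would identify the two algebraic monodromy groups in question. The key point is that $W_{K/\mathbb{Q}}\mathbb{V}$ and $\mathbb{V}$ have \emph{the same} underlying sheaf of $\mathbb{Q}$-vector spaces; Weil restriction along $K/\mathbb{Q}$ only forgets the $K$-action on fibers. Consequently the monodromy representation of $\mathbb{V}$, viewed as a $\mathbb{Q}$-representation of $\pi_1(Y,y)$ by forgetting the $K$-structure on $\mathbb{V}_y$, is literally the monodromy representation of $W_{K/\mathbb{Q}}\mathbb{V}$, and so the Zariski closures of their images in $\mathrm{GL}_\mathbb{Q}(\mathbb{V}_y) = \mathrm{GL}((W_{K/\mathbb{Q}}\mathbb{V})_y)$ coincide. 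In particular the algebraic monodromy group of $\mathbb{V}$ and that of $W_{K/\mathbb{Q}}\mathbb{V}$ agree as connected $\mathbb{Q}$-algebraic subgroups of $\mathrm{GL}((W_{K/\mathbb{Q}}\mathbb{V})_y)$.

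Finally I would apply Deligne's theorem to the integral polarizable $\mathbb{Q}$-VHS $W_{K/\mathbb{Q}}\mathbb{V}$, concluding that its algebraic monodromy group is a normal subgroup of the derived subgroup of its generic Mumford-Tate group. Combined with the identification above, this is the claim of the lemma. There is no real obstacle here; the only subtlety worth spelling out is the compatibility of the $K$-algebraic and $\mathbb{Q}$-algebraic perspectives on the monodromy group, which is immediate once one observes that both are defined by the same subset of $\mathrm{GL}_\mathbb{Q}(\mathbb{V}_y)$ after forgetting $K$-linear structure.
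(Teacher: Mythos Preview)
Your proposal is correct and follows essentially the same approach as the paper, which simply cites Andr\'e's theorem of the fixed part \cite[Theorem~1 on p.~10]{AndreFixed} and says nothing further. Your additional remarks about why $W_{K/\mathbb{Q}}\mathbb{V}$ is integral and why the two monodromy groups coincide are reasonable elaborations that the paper leaves implicit. One small correction on attribution: the statement that the connected algebraic monodromy group is normal in the derived subgroup of the generic Mumford-Tate group is due to Andr\'e, not Deligne; Deligne's theorem of the fixed part is an ingredient, but the full normality result requires Andr\'e's argument.
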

\begin{proof}
This is immediate from \cite[Theorem 1 on p. 10]{AndreFixed}. 
%To reduce the general case to the
%case $K= \mathbb Q$, we take the Weil restriction from $K$ to $\mathbb Q$. The Weil restriction splits, over $K$, as a sum of the original variation of Hodge structures and its Galois conjugates under the various automorphisms of $K$. Write this sum as $\bigoplus_{i=1}^{ [K: \mathbb Q] } \mathbb V_i$ where $i=1$ arises from the identity automorphism of $K$ so that $\mathbb V_1 \cong \mathbb V$.  Let $M_i$ be the monodromy group of $\mathbb V_i$,  and $M'$ the monodromy group of $\bigoplus_{i=1}^{ [K: \mathbb Q] } \mathbb V_i$,  and similarly let $G_i$ and $G'$ be the Tannakian groups of these variations. Since $\mathbb V_1\cong \mathbb V$,
%we have $M_1 = M$ and $G_1=G$.
%Then $M'$ is a subgroup of $\prod_i M_i$ such that the induced projection map $M' \to M_i$ is surjective. Similarly $G'$ is a subgroup of $\prod_i G_i$ with the induced projection map $G \to G_i$ surjective. Since $M'$ is a normal subgroup of $G'$, and we have a commutative diagram \[\begin{tikzcd} M' \arrow[r] \arrow[d] & G' \arrow[d] \\ M_i \arrow[r] & G_i \end{tikzcd} \] with vertical arrows surjective, $M_i$ is a normal subgroup of $G_i$. Taking $i=1$, 
%$M$ is a normal subgroup of $G$. \todo{\daniel{make this a lemma}}
\end{proof}

If $\mathbb{V}$ is a $K$-variation of Hodge structure on $Y$ and $\iota: K\hookrightarrow \mathbb{C}$ is an embedding, then $\mathbb{V}\otimes_{K, \iota} \mathbb{C}$ naturally obtains the structure of a $\mathbb{C}$-variation of Hodge structure.

\begin{definition}[Infinitesimal VHS]
	A  \emph{weak infinitesimal variation of Hodge structure on $Y$ at $x$},
	or weak IVHS, is a finite-dimensional $\mathbb{Z}$-graded complex vector
	space $V^\bullet$ equipped with an action by $T_xY$ by commuting linear
	operators of degree $-1$, i.e.~with a map $\delta^i: T_xY\to
	\on{Hom}(V^i, V^{i-1})$, such that
	$\delta^{i-1}(w_1)\circ\delta^i(w_2)(v)=\delta^{i-1}(w_2)\circ\delta^i(w_1)(v)$
	for all $w_1, w_2\in T_xY, v\in V^i$. A morphism of weak IVHS is a
	graded map of vector spaces commuting with the action of $T_xY$. We
	denote the category of weak IVHS on $Y$ at $x$ by $\on{IVHS}(Y,x)$. 
\end{definition}
\begin{remark}
We call the above \emph{weak} IVHS because the conditions we impose are weaker than the standard conditions on infinitesimal variations of Hodge structure; see for example \cite{carlsonGGH:ivhsI}.
\end{remark}
\begin{proposition}\label{proposition:ivhs-tannakian}
	The category $\on{IVHS}(Y,x)$ is a neutral Tannakian category, and the forgetful functor $\on{IVHS}(Y, x)\to \on{Vect}_\mathbb{C}$ sending $V^\bullet$ to its underlying vector space is a fiber functor. The corresponding Tannakian group is $T_xY\rtimes \mathbb{G}_m$, where $\mathbb{G}_m$ acts on $T_xY$ by inverse scaling.
\end{proposition}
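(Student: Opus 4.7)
The plan is to exhibit an equivalence of $\mathbb{C}$-linear symmetric tensor categories between $\on{IVHS}(Y,x)$ and the category $\on{Rep}(G)$ of finite-dimensional algebraic representations of the group $G := T_xY \rtimes \mathbb{G}_m$, where $T_xY$ is viewed as the vector (additive) group $\mathbb{G}_a^{\dim Y}$ and $\mathbb{G}_m$ acts on it by inverse scaling. Once this equivalence is in place, the proposition follows from the standard fact that $\on{Rep}(G)$ is a neutral Tannakian category with fiber functor the underlying-vector-space functor and Tannakian group $G$.

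First I would define the comparison functor $\on{Rep}(G) \to \on{IVHS}(Y,x)$. Any finite-dimensional representation $V$ of $G$ restricts to a representation of $\mathbb{G}_m$, and hence acquires a $\mathbb{Z}$-grading $V = \bigoplus_i V^i$ by $\mathbb{G}_m$-weight. Restriction to $T_xY$ gives a linear map $\rho: T_xY \to \on{End}(V)$, and since $T_xY$ is abelian the operators $\rho(v)$ commute. The crucial compatibility is the semidirect-product condition $\rho(t\cdot v) = t\rho(v)t^{-1}$ for $t \in \mathbb{G}_m$, $v \in T_xY$. With $\mathbb{G}_m$ acting on $T_xY$ by $t\cdot v = t^{-1}v$, a direct weight computation shows that $\rho(v)$ sends $V^i$ to $V^{i-1}$, yielding a weak IVHS. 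A quasi-inverse goes in the opposite direction: given $(V^\bullet,\delta)$, the grading defines the $\mathbb{G}_m$-action, and the $T_xY$-action as a vector space is promoted to an algebraic representation of the vector group $T_xY$ by the exponential $v \mapsto \exp(\delta(v))$; this is a well-defined polynomial map because $\delta(v)$ is locally nilpotent (the grading is bounded, $V^\bullet$ being finite-dimensional), and it is a homomorphism because the $\delta(v)$'s commute.

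Next I would check that this equivalence respects tensor structure, duals and unit. On the IVHS side one defines $(V \otimes W)^k = \bigoplus_{i+j=k} V^i \otimes W^j$ with $\delta$ acting by the Leibniz rule $\delta(v)(x \otimes y) = \delta(v)x \otimes y + x \otimes \delta(v)y$, the dual $(V^\vee)^k = (V^{-k})^*$ with $\delta(v)$ acting as minus the transpose, and the unit $\mathbf{1}$ as $\mathbb{C}$ placed in degree $0$ with trivial $T_xY$-action. Each of these matches the standard tensor, dual and unit on $\on{Rep}(G)$ under the equivalence constructed above (the Leibniz rule corresponds exactly to the comultiplication on the Hopf algebra $\on{Sym}(T_xY^\vee) \otimes \mathbb{C}[\mathbb{G}_m]$, and the minus sign in the dual corresponds to the antipode). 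The forgetful functor to $\on{Vect}_\mathbb{C}$ is visibly preserved. This gives the equivalence of neutral Tannakian categories and identifies the Tannakian group as $G = T_xY \rtimes \mathbb{G}_m$.

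I do not expect a real obstacle; the content is essentially a dictionary between a group-theoretic and a Lie-theoretic description of the same objects. The only substantive point is the sign/weight bookkeeping: the degree $-1$ condition on $\delta$ in the definition of weak IVHS is equivalent, via the semidirect-product relation, to $\mathbb{G}_m$ acting on $T_xY$ with weight $-1$, i.e.\ by \emph{inverse} scaling, which is exactly the convention in the statement.
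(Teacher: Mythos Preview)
Your proposal is correct and follows essentially the same approach as the paper: both establish an explicit equivalence of tensor categories between $\on{IVHS}(Y,x)$ and $\on{Rep}(T_xY\rtimes\mathbb{G}_m)$ by matching the grading with the $\mathbb{G}_m$-action and passing between $\delta$ and the additive-group action via exponentiation/differentiation, with the semidirect-product relation giving the degree $-1$ condition. One minor sloppiness: when you write ``Restriction to $T_xY$ gives a linear map $\rho: T_xY \to \on{End}(V)$,'' restriction actually gives a group homomorphism; the linear map is obtained by differentiating at the identity (as you implicitly acknowledge when you exponentiate in the other direction).
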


\begin{proof} It suffices to prove that $\on{IVHS}(Y,x)$ is equivalent to the category of representations of $T_xY\rtimes \mathbb{G}_m$, with the equivalence respecting the tensor product and forgetful functor to vector spaces, as the category of representations of any pro-algebraic group is a neutral Tannakian category with fiber functor the forgetful functor and Tannakian group the initial pro-algebraic group \cite[II, Example 1.25]{deligneM:tannakian-categories}.

Given a weak infinitesimal variation of Hodge structures, we obtain an action of the algebraic group $T_xY$ on $V^\bullet$ by exponentiating $\delta$, i.e. for $v \in V^i$ and $w\in T_x Y$ we have
\[ w \cdot v = v + \delta^i(w) ( v) + \frac{ \delta^{i-1} (w) \circ \delta^i(w) (v) }{2!} + \frac{ \delta^{i-2} (w) \circ  \delta^{i-1} (w) \circ \delta^i(w) (v) }{3!} +\dots.\]
(The power series converges since $V^{i-k}=0$ for all $k$ sufficiently large.) The relation $\delta^{i-1}(w_1)\circ\delta^i(w_2)(v)=\delta^{i-1}(w_2)\circ\delta^i(w_1)(v)$ implies that $ w_1 \cdot (w_2 \cdot v) =(w_1+ w_2) \cdot v$ and thus this is an action of the additive group $T_xY$ on $V^\bullet$.

We also obtain an action of $\mathbb G_m$ on $V^\bullet$ by, for $v\in V^i$ and $\lambda \in \mathbb G_m$, taking $\lambda \cdot v= \lambda^i v$. Then for $\lambda \in \mathbb G_m$, $w\in T_x Y$ and $v \in V^\bullet$ it is not hard to check that we have $\lambda \cdot (w\cdot (\lambda^{-1} \cdot v))= (\lambda^{-1} w ) \cdot v$. 
% since $v = \sum_i v_i$ with $v_i \in V^i$. and we have
%$\lambda \cdot (w\cdot (\lambda^{-1} \cdot v))= \lambda \cdot (w\cdot (\lambda^{-1} \cdot \sum_i v_i ))= \lambda \cdot (w\cdot \sum_i \lambda^{-i} v_i ) = \lambda \cdot \sum_i w \lambda^{-i} v_i= \sum_i \lambda^{i-1} w \lambda^{-i} v_i = \
This relation implies that the two actions combine to give an action of $T_xY\rtimes \mathbb{G}_m$ on $V^\bullet$.

Conversely, given a vector space $V$ with an action of $\mathbb G_m$, we obtain a grading by taking $V^i$ to be the subspace on which $\lambda\in \mathbb G_m$ acts with eigenvalue $\lambda^i$ for all $\lambda$, and the derivative at the identity of the action of the additive group $T_x Y$ defines an action of $T_x Y$ by commuting linear operators.

%One may check from the definitions that these two constructions are mutually inverse and compatible with tensor product and with the (forgetful) fiber functor to the underlying vector space.

These two constructions are inverse (as can be checked separately on the $T_x Y$ and $\mathbb G_m$ parts, the first part being the standard equivalence between representations of a unipotent algebraic group and nilpotent representations of its Lie algebra, and the second being the standard equivalence between graded vector spaces and representations of $\mathbb G_m$).

They are compatible with tensor product (for the natural notion of tensor product on weak infinitesimal variations of Hodge structures) and, trivially, with the forgetful functor to the underlying vector space. 
\end{proof}

Given a $K$-variation of Hodge structure $\mathbb V$ on $Y$ and an embedding $\iota: K\hookrightarrow\mathbb{C}$, 
we can form the graded vector space $\bigoplus_{i\in \mathbb Z} (F^i \mathbb V_x\otimes_{K, \iota} \mathbb{C}) / (F^{i+1} \mathbb V_x \otimes_{K, \iota} \mathbb{C})$, which admits an action of the tangent space $T_x Y$ by commuting linear maps of degree $-1$ (by Griffiths transversality). This formation is functorial, giving rise to a functor 
$\on{GH}_x$, where $\on{GH}$ stands for for ``graded Hodge,''
\begin{align}
	\label{equation:infinitesimal-functor}
	\on{GH}_x: \on{VHS}(Y, K)\to \on{IVHS}(Y, x)
\end{align}
(which depends on $\iota$, but we suppress this dependence from the notation). 

Composing this functor with the Weil restriction functor $W_{K/\mathbb{Q}}$ induces a homomorphism from $T_xY\rtimes \mathbb{G}_m$ to the generic Mumford-Tate group of $W_{K/\mathbb{Q}}\mathbb{V}$, since $T_xY\rtimes \mathbb{G}_m$ is connected and hence any homomorphism from $T_xY\rtimes \mathbb{G}_m$ to the Tannakian group lands in its identity component.
\begin{lemma}
	\label{lemma:simple-or-reducible}
	Let $K$ be a number field, $Y$ a smooth complex variety, $x$ a point of $Y$, and $\mathbb V$ a pure integral
	$K$-VHS on $Y$. Let $M$ be the monodromy group of $\mathbb
	V$, i.e. the Zariski closure of the monodromy representation  $\pi_1(Y, x) \to
	\on{GL}(\mathbb{V}_x)$, 
	and let $\mathfrak m$ be the Lie algebra of the identity component of $M$.

Suppose $\mathbb V$ has at most $k+1$ nonvanishing Hodge numbers. Then either $\on{GH}_x(\mathbb V)$ 
splits as a direct sum in $\on{IVHS}(Y,x)$ or $\mathfrak m$ acts irreducibly on $\mathbb V$ and has at most $k$ simple factors. 
 
 In particular, if $\mathbb V$ has at most $2$ nonvanishing Hodge numbers, then either $\on{GH}_x(\mathbb V)$ splits or the identity component of $M$ is a simple algebraic group acting irreducibly on $\mathbb V$.
 \end{lemma}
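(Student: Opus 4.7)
I would work Tannakianly. The functor $\on{GH}_x : \on{VHS}(Y, K) \to \on{IVHS}(Y, x)$ from \eqref{equation:infinitesimal-functor} induces a homomorphism from the Tannakian group $T_xY \rtimes \mathbb G_m$ of $\on{IVHS}(Y,x)$ (identified in \autoref{proposition:ivhs-tannakian}) into the generic Mumford--Tate group $MT$ of the Weil restriction $W_{K/\mathbb Q}\mathbb V$; the key external input is the lemma of André recalled just above, that the algebraic monodromy group $M^0$ is normal in $MT^{\mathrm{der}}$. Throughout I will pass to the fiber $\mathbb V_x$ viewed over $\mathbb C$ via the chosen embedding $\iota : K \hookrightarrow \mathbb C$, and use freely that $MT$ is connected.

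To obtain the $\mathfrak m$-irreducibility, I would argue the contrapositive: assume $\mathfrak m$ acts reducibly on $\mathbb V_x$ and produce a splitting of $\on{GH}_x(\mathbb V)$ in $\on{IVHS}(Y,x)$. Consider the $M^0$-isotypic decomposition $\mathbb V_x = \bigoplus_\alpha I_\alpha$. Because $MT$ is connected and normalizes $M^0$, it cannot permute the (discrete) isomorphism classes of $M^0$-irreducibles, so it preserves each $I_\alpha$. By the Tannakian correspondence between sub-$MT$-representations of $\mathbb V_x$ and sub-VHSs of $\mathbb V$, this produces a direct-sum decomposition of $\mathbb V$ into sub-VHSs, whence of $\on{GH}_x(\mathbb V)$, whenever there is more than one isotypic component. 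In the remaining isotypic case $\mathbb V_x \cong U \otimes m$ with $\dim m > 1$, a Jacobson density / double commutant argument shows $N_{\mathrm{GL}(\mathbb V_x)}(M^0) \subseteq \mathrm{GL}(U) \otimes \mathrm{GL}(m)$, so the Hodge cocharacter $\mu$ acts as $\mu_U \otimes \mu_m$ up to a central scalar. This endows the multiplicity space $m$ with an induced Hodge structure; decomposing $m$ into its Hodge-pure subspaces splits $U \otimes m$ as a direct sum of sub-$MT$-representations, hence of sub-VHSs, again splitting $\on{GH}_x(\mathbb V)$.

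For the bound on the number of simple factors, assume $\mathfrak m$ acts irreducibly and decompose $\mathfrak m = \mathfrak m_1 \oplus \cdots \oplus \mathfrak m_\ell$ into simple ideals, so that $\mathbb V_x \cong V_1 \otimes \cdots \otimes V_\ell$ with each $V_i$ a nontrivial irreducible $\mathfrak m_i$-module (nontriviality is forced by faithfulness of $M^0$ on $\mathbb V_x$). Applying the same double-commutant argument, $\mu$ acts on the tensor decomposition diagonally as $\mu_1 \otimes \cdots \otimes \mu_\ell$ up to a central scalar, so the Hodge weights of $\mathbb V_x$ are the sumset of the weights of $\mu_i$ on $V_i$. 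The Hodge-theoretic input that each simple factor of the algebraic monodromy of a polarized VHS is non-compact---equivalently, contributes non-zero pieces outside $\mathfrak m^{0,0}$ in the Hodge decomposition of $\mathfrak m$---guarantees each $\mu_i$ has at least two distinct weights on $V_i$. The elementary sumset bound $|S_1 + \cdots + S_\ell| \geq \ell + 1$ when each $|S_i| \geq 2$ then yields $\ell + 1 \leq k + 1$, i.e.\ $\ell \leq k$, and the final sentence of the lemma is the case $k = 1$. I expect the main obstacle to be securing the non-compactness input for each $\mu_i|_{V_i}$ cleanly in the present integral $K$-VHS setting, together with careful handling of the isotypic-with-multiplicity case of Step~1, where the Hodge structure on the multiplicity space is canonical only up to an overall Tate shift.
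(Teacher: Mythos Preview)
Your overall Tannakian strategy is sound, and Step~2 is essentially correct (modulo justifying the non-compactness input), but Step~1 has a genuine gap in the isotypic-with-multiplicity case. You claim that decomposing the multiplicity space $m$ into its $\mu_m$-graded pieces produces sub-$MT$-representations of $\mathbb{V}_x = U \otimes m$. This is false: the double-commutant argument only shows $MT \subset (\mathrm{GL}(U) \times \mathrm{GL}(m))/\mathbb{G}_m$, and the image of $MT$ in $\mathrm{GL}(m)$ can be far larger than the torus generated by $\mu_m$---indeed it can act irreducibly on $m$. The same obstruction blocks even the weaker goal of splitting $\on{GH}_x(\mathbb{V})$: the image of $T_xY$ in $\mathfrak{gl}(m)$ may be nonzero (it lands in $\mathfrak{mt}$, not in $\mathfrak{m}$), in which case it shifts the $\mu_m$-grading and the Hodge-pure pieces of $m$ are not $T_xY$-stable.

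The paper avoids this by working with the Mumford--Tate Lie algebra $\mathfrak{g}$ rather than $\mathfrak{m}$. After reducing to the $\mathfrak{g}$-irreducible case (else $\mathbb{V}$ splits as a VHS, hence $\on{GH}_x$ splits), one tensor-decomposes relative to the simple factors $\mathfrak{g}_i$ of $\mathfrak{g}$; now $T_xY \rtimes \mathbb{G}_m$ genuinely maps into each $\mathfrak{gl}(V_i)$, and the dichotomy ``$T_xY$ trivial on some $V_i$ (so $\on{GH}_x$ splits)'' versus ``each $V_i$ has $\geq 2$ grades (so $n\leq k$)'' is immediate. The link back to $\mathfrak{m}$ is made only at the end: any $\mathfrak{g}_i \not\subset \mathfrak{m}$ has adjoint representation with trivial monodromy, hence is a constant VHS, hence has vanishing period-map derivative---forcing $T_xY$ to act trivially on $V_i$, a case already disposed of. This monodromy-to-constancy argument is exactly the missing ingredient that would repair your Step~1 (it shows $T_xY$ acts trivially on $m$), and it is also what underlies the non-compactness input you invoke in Step~2 (applied with $\mathfrak{m}_i$ in place of $\mathfrak{g}_i$). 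So the paper's route and yours converge on the same key lemma, but by front-loading the passage to $\mathfrak{g}$ the paper sidesteps the normalizer gymnastics entirely.
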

 
 \begin{proof} Since monodromy groups are preserved when a local system is extended to a larger coefficient fields, we may assume $K$ is Galois over $\mathbb Q$.  
 %The category of graded vector spaces with an action of $T_X x$ by commuting linear maps of degree $1$ is a Tannakian category, the category of representations of the group $T_x Y\rtimes \mathbb G_m$, where $\mathbb{G}_m$ acts on $T_Yx$ by scaling. The functor $\on{GH}_x$ is an exact tensor functor from the Tannakian category of $K$-VHSs to this category, hence determines a map from $T_x Y\rtimes \mathbb G_m$ to the Tannakian fundamental group of the category of $K$-VHSs.
 
Let $G$ be the generic Mumford-Tate group of $W_{K/\mathbb{Q}}\mathbb{V}$ 
%$G^0$ the identity component of $G$, 
and $\mathfrak g$ the Lie algebra of $G$. The action of $T_x Y\rtimes \mathbb G_m$ on $\on{gr}^{F^\bullet}(W_{K/\mathbb{Q}}\mathbb{V}_x\otimes \mathbb{C})$ is given by a homomorphism $T_xY\rtimes \mathbb G_m \to G_{\mathbb{C}}$.
%, which, since $T_xY\rtimes \mathbb G_m$ is connected, factors through $G^0$.

Because the category $\on{VHS}(Y, \mathscr{O}_K)$ is semisimple, $G$ is reductive.
%, and thus $G^0$ is reductive. 
Thus if $\mathbb V$ is reducible as a representation of $G_K$ (that is, after passing to a finite \'etale cover, as a $K$-VHS), it splits as a direct sum of two representations of $G$, hence a direct sum of two representations of  $T_x Y\rtimes \mathbb G_m$, so  $\on{GH}_x(\mathbb V)$ splits as a sum of two graded vector spaces with actions of $T_x Y$. Hence we may assume $\mathbb V$ is irreducible as a representation of $G_K$ and thus irreducible as a representation of $\mathfrak g$.

Because $G$ is reductive, $\mathfrak g$ splits as a product of $n$ simple Lie algebras $\mathfrak g_1,\dots, \mathfrak g_n$ times a trivial Lie algebra, and because $\mathbb V$ is irreducible as a representation of $\mathfrak g$, it is a tensor product of $n$ nontrivial irreducible  representations $V_1,\dots, V_n$ of the $n$ simple Lie algebras with a one-dimensional representation of the trivial Lie algebra. Each $V_i$ is a representation of the Lie algebra of $T_x Y\rtimes \mathbb G_m$, and thus admits a $\mathbb C$-grading and an action of $T_x Y$ by commuting linear maps of degree $-1$.

If $T_x Y$ acts trivially on some $V_i$, then $V_i$ splits as a direct sum since every graded vector space of dimension $>1$ splits as a direct sum of graded vector spaces (and one-dimensional representations of simple Lie algebras are trivial), 
so $\mathbb V$ splits as a direct sum of graded vector spaces with actions of $T_x Y$. So we may assume $T_x Y$ acts nontrivially on each $V_i$. It follows that each $V_i$ has vectors of at least two different grades, so the tensor product $\mathbb V$ of the $V_i$ has vectors of at least $n+1$ different grades, and thus $n \leq k$. 

As $M$  is a normal subgroup of $G$, the Lie algebra $\mathfrak m$ of the
identity component of $M$ is an ideal of $G$. Therefore, $\mathfrak m$ is a sum of some of the $\mathfrak g_i$'s, possibly with a trivial algebra. Since monodromy groups of $\mathbb Q$-VHS's are simple, $\mathfrak m$ is a sum of some of the $\mathfrak g_i$s. If some $\mathfrak g_i$ does not appear in this sum, then its adjoint representation corresponds to a $\mathbb Q$-VHS (possibly on a cover of $X$) with trivial monodromy, hence a constant Hodge structure, so the derivative of its period map vanishes, and thus $T_x Y$ acts trivially on this adjoint representation. But this implies that the image of $T_x Y$ in $\mathfrak g_i$ is zero, and thus $T_x Y$ acts trivially on $V_i$, contradicting our assumption that $T_x Y$ acts nontrivially on each $V_i$.

Thus $\mathfrak m$ is the sum of all the $\mathfrak g_i$s and thus acts irreducibly on $\mathbb V$, and, in addition, has $n\leq k$ simple factors.
\end{proof}

\subsection{Lie algebras and weights}\label{subsection:weights}

We discuss some generalities on representations of the Lie algebra of the generic Mumford-Tate group.

For $\mathbb V$ a $\mathbb Q$-VHS on a variety $X$ and $x$ a point of $X$, \autoref{proposition:ivhs-tannakian} gives a homomorphism $T_x Y \rtimes \mathbb G_m \to G$ where $G$ is the generic Mumford-Tate group of $\mathbb V$. Thus we obtain a Lie algebra homomorphism $T_x Y \rtimes \mathbb C \to \mathfrak g$, where $\mathfrak g$ is the Lie algebra of $G$, $\mathbb C$ is the Lie algebra of $\mathbb G_m$, and $\mathbb C$ acts on $T_x Y$ by $ [1, v] =-v$ for $ v\in T_x Y$, where the minus sign appears because $\mathbb{G}_m$ acts on $T_x Y$ by inverse scaling, by \autoref{proposition:ivhs-tannakian}.

For any representation of $\mathfrak g$, we refer to the eigenvalues of $\mathbb C \subseteq T_x Y \rtimes \mathbb C$ on that representation as the weights and the generalized eigenspace of a given eigenvalue as the weight space. For the representation arising from the action of $G$ on $\mathbb V_x$, these weights agree with the Hodge weights of $\mathbb V$, and in particular are integers, but for arbitrary representations they will be complex numbers.

For any representation that factors through a finite covering of $G$, the action of $\mathbb C$ factors through a finite covering of $\mathbb G_m$. This implies that the weights will be rational numbers, and the action of $\mathbb C$ is semisimple so the generalized eigenspaces will be eigenspaces.

The identity $[1,v] =-v$ for $v \in T_x Y$ implies that the elements of $T_x Y$ send elements of weight $w$ to elements of weight $w-1$.

%The generic Mumford-Tate group is contained in the normalizer of the monodromy group. So the Lie algebra $\mathfrak g$ of the generic Mumford-Tate group is contained in the Lie algebra of the normalizer of the monodromy group. Thus, representations of the Lie algebra of the normalizer of the monodromy group give representations of the Lie algebra of the generic Mumford-Tate group.

\begin{example}\label{ex:wedge-example}
	Let $\mathbb{V}$ be an integral $K$-variation of Hodge structure on $X$, with Mumford-Tate (isogenous to) $\on{GL}_\nu$ acting through the $\wedge^k: \on{GL}_{\nu}\to \on{GL}_{{\nu \choose k}}$, the $k$-th wedge power of the standard representation, where $1<k<\nu-1$. Let $\on{std}: \mathfrak{gl}_\nu\to \mathfrak{gl}_\nu$ be the standard representation. As in the discussion above, we have a Lie algebra homomorphism $\iota: T_xY\rtimes \mathbb{C}\to \mathfrak{g}=\mathfrak{gl}_\nu$, so that the weights of $\wedge^k\circ \iota(1)$ are the Hodge weights of $\mathbb{V}$.
	
%	\aaron{why is this next sentence true? what are we assuming?} \will{The key implication is that if $\wedge^k$ of a representation of dimension $>k$ has integer weights then the representation itself has weights in $\frac{1}{k}\mathbb{Z}$. Proof is to first check the difference of any two weights is an integer since it's also a difference of two weights of $\wedge^k$, and then observe $k$ times a weight is a weight of $\wedge^k$ plus $k-1$ differences.}
	We may in this case consider $\on{std}\circ \iota(1)$, which acts on $\mathbb{C}^{\nu}$ with weights in $\frac{1}{k}\mathbb{Z}$. By definition, $\wedge^k\circ\iota(1)=\bigwedge^k(\on{std}\circ \iota(1))$. Thus, letting $a_1, \cdots, a_\nu$ be the weights of $\on{std}\circ \iota(1)$, we have that the weights of $\wedge^k\circ\iota(1)$ are precisely the sums $a_{i_1}+ \cdots+ a_{i_k}$, where $1\leq i_1, \cdots, i_k\leq \nu$ are distinct integers.
	
	The following case will be used in the proof \autoref{lemma:wedge-bound}. Suppose every weight of $\wedge^k\circ\iota(1)$ is either $0$ or $1$. Then the weights of $\on{std}\circ \iota(1)$ must be either $$\{\frac{1}{k}, \cdots, \frac{1}{k}, -\frac{k-1}{k}\}\text{ or } \{0, \cdots, 0, 1\},$$ by an elementary combinatorial analysis.

\end{example}

\section{Generic Torelli theorems for unitary local systems}
\label{section:generic-torelli}

Let $\pi: \mathscr{C}\to \mathscr{M}$ be a versal family of $n$-punctured curves
of genus $g$, and let $\pi^\circ: \mathscr{C}^\circ\to \mathscr{M}$ be the
associated punctured versal family, defined as in
\autoref{notation:versal-family-of-covers}. Let $m\in \mathscr{M}$ be a general
point and $C=\mathscr{C}_m, C^\circ=\mathscr{C}^\circ_m, D=C - C^\circ$. Let $\mathbb{U}$ be a unitary local system on $\mathscr{C}^\circ$. The goal of this section is to show that, in many cases, $\mathbb{U}|_{C^\circ}$ can be functorially recovered from the local system $W_1R^1\pi^\circ_*\mathbb{U}$.
In fact, it will be recoverable from the associated weak IVHS at a general point of $m$. We view this as a generic Torelli theorem for curves with a unitary local system, and indeed the proof is closely related to classical proofs of the generic Torelli theorem, as in \cite{harris:IVHS-notes}.

\subsection{Reconstructing unitary bundles}
In this section we reconstruct from $W_1R^1\pi_*^\circ\mathbb{U}$ the vector bundle $\widehat E_0$ corresponding to the parabolic bundle on $(C,D)$ associated to $\mathbb{U}|_{C^\circ}$. In fact we will reconstruct $\widehat E_0$ 
from $\on{GH}_m(W_1R^1\pi_*^\circ \mathbb{U}),$
with $\on{GH}$ as defined in \eqref{equation:infinitesimal-functor}.
In case $D=\emptyset$, this in fact recovers $\mathbb{U}|_C$ by the
Narasimhan-Seshadri correspondence \cite{narasimhan1965stable}. When $D$ is
non-empty, some additional work will be required to recover $\mathbb{U}|_C$.

\begin{theorem}\label{thm:vector-bundle-reconstruction}
		 With notation as above, let $\mathbb{U}$ be a unitary local system on $C^\circ$, with monodromy representation $\rho$ and associated parabolic bundle $E_\star$ on $(C,D)$. Let $r=\rk(\mathbb{U})$, and assume $g\geq 2+2r$. Let 
	 $\psi^\rho_m$ be the map $$H^0(C, \widehat{E}_0\otimes\omega_C(D))\otimes\mathscr{O}_C\overset{\theta^\rho_m\otimes \on{id}}{\longrightarrow} H^1(C, E_0)\otimes H^0(C, \omega_C^{\otimes 2}(D))\otimes \mathscr{O}_C\overset{\on{id}\otimes \on{ev}}{\longrightarrow} H^1(C, E_0)\otimes \omega_C^{\otimes 2}(D)$$
	 obtained as the composition of the map induced by $\theta_m^\rho$, defined in \eqref{equation:theta-map}, with the map induced by the evaluation map $\on{ev}: H^0(C, \omega_C^{\otimes 2}(D))\otimes \mathscr{O}_C\to \omega_C^{\otimes 2}(D).$ Then $\psi^\rho_m$ factors through the evaluation map $$\on{ev}: H^0(C, \widehat{E}_0\otimes\omega_C(D))\otimes\mathscr{O}_C\to \widehat{E}_0\otimes\omega_C(D),$$ and induces an isomorphism $$\widehat{E}_0\otimes\omega_C(D)\overset{\sim}{\to}\on{im}(\psi^\rho_m).$$
\end{theorem}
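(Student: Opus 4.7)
The plan is to verify three properties, from which the theorem follows by formal manipulation. Write $\on{ev}$ for the tautological evaluation map $H^0(C, \widehat E^\rho_0 \otimes \omega_C(D)) \otimes \mathscr O_C \to \widehat E^\rho_0 \otimes \omega_C(D)$. I aim to show:
\begin{enumerate}
\item[(a)] $\psi^\rho_m$ factors as $\widetilde\psi^\rho_m \circ \on{ev}$ for a canonical sheaf map $\widetilde\psi^\rho_m \colon \widehat E^\rho_0 \otimes \omega_C(D) \to H^1(C, E^\rho_0) \otimes \omega_C^{\otimes 2}(D)$;
\item[(b)] $\widetilde\psi^\rho_m$ is injective as a map of sheaves;
\item[(c)] $\on{ev}$ is surjective, i.e.\ $\widehat E^\rho_0 \otimes \omega_C(D)$ is globally generated.
\end{enumerate}
Together these give $\on{im}(\psi^\rho_m) = \widetilde\psi^\rho_m(\widehat E^\rho_0 \otimes \omega_C(D))$, and $\widetilde\psi^\rho_m$ descends to the asserted isomorphism $\widehat E^\rho_0 \otimes \omega_C(D) \xrightarrow{\sim} \on{im}(\psi^\rho_m)$.

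For (a), I will trace through the construction of $\theta^\rho_m$ as the adjoint of $B^\rho_m$. The form $B^\rho_m$ is by definition induced from the fiberwise pairing \eqref{mini-bilinear-form} $\widehat E^\rho_0 \otimes (E^\rho_0)^\vee \to \mathscr O_C$ tensored with the multiplication $\omega_C(D) \otimes \omega_C \to \omega_C^{\otimes 2}(D)$. Combined with Serre duality $H^1(C, E^\rho_0) \cong H^0(C, (E^\rho_0)^\vee \otimes \omega_C)^\vee$, this shows that $\psi^\rho_m(s)(p)$ corresponds to the linear map $t \mapsto s(p) \cdot t(p)$ from $H^0(C, (E^\rho_0)^\vee \otimes \omega_C)$ into $\omega_C^{\otimes 2}(D)|_p$. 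Since this depends only on the fiber $s(p)$, the factorization is automatic.

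For (b), since $\widehat E^\rho_0 \otimes \omega_C(D)$ is locally free on the smooth curve $C$, injectivity as a sheaf map follows from injectivity on a single fiber at some $p \notin D$. At such a point, $(\widehat E^\rho_0)|_p = (E^\rho_0)|_p$, and a vector $v \in (\widehat E^\rho_0 \otimes \omega_C(D))|_p$ lies in $\ker(\widetilde\psi^\rho_m|_p)$ iff $v$ pairs trivially with the image of the evaluation $H^0(C, (E^\rho_0)^\vee \otimes \omega_C) \to ((E^\rho_0)^\vee \otimes \omega_C)|_p$. By \autoref{lemma:dual-hat}, $(E^\rho_0)^\vee \otimes \omega_C \simeq \widehat E^{\rho^\vee}_0 \otimes \omega_C(D)$, and \autoref{proposition:globally-generated} applied to $\rho^\vee$ (whose rank equals that of $\rho$, so the hypothesis $g \geq 2+2r$ is preserved) shows this bundle is globally generated. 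Hence the evaluation image is the entire fiber and non-degeneracy of the perfect pairing forces $v = 0$. Property (c) is \autoref{proposition:globally-generated} applied directly to $\rho$.

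The entire technical content lives in \autoref{proposition:globally-generated}, which is where the genus bound $g \geq 2 + 2r$ is consumed (applied once to $\rho$ and once to $\rho^\vee$). Everything else is a formal combination of Yokogawa's identification, Serre duality, and the adjoint description of the derivative of the period map. The main obstacle — establishing global generation of these bundles on a generic curve — was surmounted in the previous section.
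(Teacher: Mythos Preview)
Your proof is correct and follows essentially the same approach as the paper: both arguments reduce to showing $\ker(\psi^\rho_m)=\ker(\on{ev})$ via the identification $(E^\rho_0)^\vee\otimes\omega_C\simeq \widehat{E}^{\rho^\vee}_0\otimes\omega_C(D)$ and two applications of \autoref{proposition:globally-generated} (to $\rho$ and $\rho^\vee$). One minor point: \autoref{lemma:dual-hat} as stated concerns (semi)stability, so you should cite Yokogawa's identity $\widehat{E}^{\rho^\vee}_0(D)\simeq (E^{\rho}_0)^\vee$ directly (as the paper does in the proof of that lemma) rather than the lemma itself.
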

\begin{proof}

It suffices to show that 
\begin{align}
	\label{equation:kernel-equality}
\ker(\psi_m^\rho)=\ker(\on{ev}: H^0(C, \widehat{E}_0\otimes\omega_C(D))\otimes\mathscr{O}_C\to \widehat{E}_0\otimes\omega_C(D)),
\end{align}
as $\widehat E_0\otimes \omega_C(D)$ is globally generated by \autoref{proposition:globally-generated}, using that $m$ is a general point of $\mathscr{M}$.

We first observe that $H^1(C, E_0)$ is, by Serre duality, dual to $H^0(C,  E_0^\vee\otimes \omega_C)$. Note that, as $m$ is general, $E_0^\vee\otimes \omega_C$ is globally generated by \autoref{proposition:globally-generated}, as, setting $F_\star$ to be the parabolic dual to $E_\star$, we have $E_0^\vee\otimes \omega_C=\widehat F_0\otimes \omega_C(D)$.

Let $s$ be a local section to $H^0(C, \widehat E_0\otimes
	\omega_C(D))\otimes \mathscr{O}_C$. Now $s$ is in the kernel of
	$\psi^\rho_m$ if and only if, for all $t\in H^1(C, E_0)^\vee=H^0(C,
	E_0^\vee\otimes \omega_C)$, we have that $t(\psi^\rho_m(s))=0$ as a
	local section to $\omega_C^{\otimes 2}(D)$. Note that
	$t(\psi^\rho_m(s)) = B^\rho_m(s, t)$, for $B^\rho_m$ as defined in
	\eqref{key-bilinear-form}.
	Now, $B^\rho_m(s, t)$
	vanishes for all $t$ if and only if $\on{ev}(s)=0$, as $E_0^\vee\otimes \omega_C$ is (generically) globally generated and $B^\rho_m$ induces a perfect pairing between the generic fibers of $\widehat E_0\otimes \omega_C(D)$ and $E_0^\vee\otimes \omega_C$.
	This implies \eqref{equation:kernel-equality}.	
\end{proof}
\subsection{Functoriality}
The upshot of \autoref{thm:vector-bundle-reconstruction} is that $\widehat{E}_0\otimes \omega_C(D)$ (and hence $\widehat E_0$ itself) may be constructed from $\on{GH}_m(W_1R^1\pi_*\mathbb{U})$ for generic $m$. Note that this construction is functorial: given $\mathbb{U}, \mathbb{U}'$ unitary local systems on $\mathscr{C}^\circ$, and given a map $\on{GH}_m(W_1R^1\pi^\circ_*\mathbb{U})\to \on{GH}_m(W_1R^1\pi_*^\circ \mathbb{U}')$, one obtains a map $\widehat E_0\to \widehat E_0'$, where $E_\star'$ is the parabolic bundle on $C$ corresponding to $\mathbb{U}'|_{C^\circ}$. The goal of this section is to show that in many cases this map is necessarily flat for the natural unitary connections on $\widehat E_0, \widehat E_0'$. This is automatic from the Narasimhan-Seshadri correspondence if $D=\emptyset$, but not in general.

\begin{proposition}\label{proposition:functoriality}
	With notation as in \autoref{notation:versal-family-of-covers}, let
	$m\in \mathscr{M}$ be general, set $C=\mathscr{C}_m,
	C^\circ=\mathscr{C}^\circ_m, D=C - C^\circ$.  Let $\mathbb{U}, \mathbb{U}'$ be unitary local systems on $\mathscr{C}^\circ$, such that $\mathbb{U}|_{C^\circ}, \mathbb{U}'|_{C^\circ}$ are irreducible, of dimension $r, r'$. Assume that $$\on{GH}_m(W_1R^1\pi^\circ_*\mathbb{U}), \on{GH}_m(W_1R^1\pi^\circ_*\mathbb{U}')$$ are isomorphic to one another. Suppose that $g\geq \max(2+2r, 2+2r')$ and either
	\begin{enumerate} \item $D =\emptyset$, or 
	\item $g> r r'$.
	\end{enumerate} 
%	\begin{enumerate}  \item If $D=\emptyset$ and $g>\max(2+2r, 2+2r')$, then the natural map 
%	\begin{align}
%		\label{equation:hom-isomorphism1}
%\on{Hom}_{\on{LocSys}}(\mathbb{U}|_{C^\circ}, \mathbb{U}'|_{C^\circ})\to \on{Hom}_{\on{IVHS}(\mathscr{M},m)}(\on{GH}_m(W_1R^1\pi^\circ_*\mathbb{U}), \on{GH}_m(W_1R^1\pi^\circ_*\mathbb{U}'))
%	\end{align}
%	is a bijection.
%	\item  In general, suppose that for generic $m\in \mathscr{M}$, $\on{GH}_m(W_1R^1\pi^\circ_*\mathbb{U})\simeq \on{GH}_m(W_1R^1\pi^\circ_*\mathbb{U}').$ If $$g>\max(2, r^2, (r')^2)$$  the natural map 
%	\begin{align}
%		\label{equation:hom-isomorphism2}
%\on{Hom}_{\on{LocSys}}(\mathbb{U}|_{C^\circ}, \mathbb{U}'|_{C^\circ})\to \on{Hom}_{\on{IVHS}(\mathscr{M},m)}(\on{GH}_m(W_1R^1\pi^\circ_*\mathbb{U}), \on{GH}_m(W_1R^1\pi^\circ_*\mathbb{U}'))
%	\end{align}
	Then the natural map \begin{align}\label{equation:hom-isomorphism} \on{Hom}_{\on{LocSys}}(\mathbb{U}|_{C^\circ}, \mathbb{U}'|_{C^\circ})\to \on{Hom}_{\on{IVHS}(\mathscr{M},m)}(\on{GH}_m(W_1R^1\pi^\circ_*\mathbb{U}), \on{GH}_m(W_1R^1\pi^\circ_*\mathbb{U}'))\end{align}
 is a bijection. In particular, $\mathbb{U}|_{C^\circ}$ is isomorphic to $\mathbb{U}'|_{C^\circ}$.
\end{proposition}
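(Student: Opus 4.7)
The overall plan is to use the functoriality of the reconstruction in \autoref{thm:vector-bundle-reconstruction} to produce, from an IVHS morphism, a bundle map $\phi \colon \widehat{E}_0 \to \widehat{E}_0'$, and then to promote $\phi$ to a morphism of parabolic bundles of parabolic degree $0$, at which point the Mehta-Seshadri correspondence yields the desired morphism of unitary local systems inducing the given IVHS map.

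For injectivity of \eqref{equation:hom-isomorphism}: a morphism $f \colon \mathbb{U}|_{C^\circ} \to \mathbb{U}'|_{C^\circ}$ inducing the zero map on IVHSs has, in particular, zero induced map on the $F^1$ piece $H^0(C, \widehat{E}_0\otimes\omega_C(D))$. Since $\widehat{E}_0 \otimes \omega_C(D)$ is globally generated on the generic curve by \autoref{proposition:globally-generated}, the underlying bundle map $\widehat{E}_0 \to \widehat{E}_0'$ induced by $f$ vanishes, and hence so does $f$ itself (as the parabolic bundle morphism associated to $f$ agrees with this bundle morphism on $\widehat{E}_0$ over $C^\circ$, and $f$ is recovered from the parabolic morphism via Mehta-Seshadri).

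For surjectivity, given an IVHS morphism $\Phi$, functoriality of \autoref{thm:vector-bundle-reconstruction} produces a bundle map $\phi \colon \widehat{E}_0 \to \widehat{E}_0'$. In the case $D = \emptyset$ we have $\widehat{E}_0 = E_0$, which is polystable of degree zero since $\mathbb{U}|_{C^\circ}$ is unitary; the Narasimhan-Seshadri correspondence then directly promotes $\phi$ to the desired morphism of unitary local systems. The last assertion, that $\mathbb{U}|_{C^\circ} \simeq \mathbb{U}'|_{C^\circ}$, then follows since $\phi$ must be nonzero and both local systems are irreducible of the same dimension.

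In the case $D \neq \emptyset$ with $g > rr'$, the main obstacle is to promote $\phi$ to a genuine parabolic morphism $E_\star \to E_\star'$. The plan is to exploit Serre duality of $W^1R^1\pi^\circ_*\mathbb{U}$ with $W^1R^1\pi^\circ_*\mathbb{U}^\vee$: the given IVHS morphism thereby yields an IVHS morphism for the dual local systems, to which we may again apply \autoref{thm:vector-bundle-reconstruction}. Using \autoref{lemma:dual-hat}, which identifies $\widehat{E}^{\rho^\vee}_0(D) \simeq (E^\rho_0)^\vee$, this produces a map $E^\rho_0 \to E^{\rho'}_0$ compatible with $\phi$ under the inclusion $\widehat{E}_0 \hookrightarrow E_0$. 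The numerical hypothesis $g > rr'$ enters via a dimension count in the spirit of \autoref{lemma:ggg-parabolic-consequence}, applied to the tensor product $\widehat{E}_0^\vee \otimes \widehat{E}_0'$ of rank $rr'$, forcing the extension to be unique and to respect the parabolic filtrations at each $p \in D$ (the latter because any map respecting the residue action of the local monodromies automatically preserves the parabolic weights). With the parabolic morphism in hand, the Mehta-Seshadri correspondence delivers the desired morphism of unitary local systems, completing the proof.
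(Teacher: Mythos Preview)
Your treatment of injectivity and of the case $D=\emptyset$ matches the paper's argument. The gap is in the case $D\neq\emptyset$.

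Your strategy there is to extend the bundle map $\phi\colon\widehat{E}_0\to\widehat{E}_0'$ to a morphism $E_\star\to E_\star'$ of parabolic bundles by using duality to produce a companion map $E_0\to E_0'$, and then to invoke Mehta--Seshadri. Several steps here are not justified. First, you assert that the map $E_0\to E_0'$ obtained from the dual IVHS is compatible with $\phi$ under the inclusion $\widehat{E}_0\hookrightarrow E_0$, but you give no argument for this; the two reconstructions use different evaluation maps and it is not automatic that they agree on the overlap. Second, even granting compatibility, you do not explain why the resulting map respects the parabolic filtrations and weights; your parenthetical that ``any map respecting the residue action of the local monodromies automatically preserves the parabolic weights'' is circular, since compatibility with the residues is essentially flatness, which is precisely what must be proven. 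Third, the invocation of $g>rr'$ is vague: you gesture at a dimension count ``in the spirit of \autoref{lemma:ggg-parabolic-consequence}'' for $\widehat{E}_0^\vee\otimes\widehat{E}_0'$ without saying what the count is or why it forces anything.

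The paper's argument is quite different and does not attempt to build a parabolic morphism. Instead, it directly shows that the reconstructed bundle map $\widetilde\psi\colon\widehat{E}_0^\rho\to\widehat{E}_0^{\rho'}$ is \emph{flat}, i.e.\ that the covariant derivative $\nabla\widetilde\psi\colon T_C(-D)\to\on{Hom}(\widehat{E}_0^\rho,\widehat{E}_0^{\rho'})$ vanishes identically. The key input you are missing is deformation-theoretic: since $m\in\mathscr{M}$ is general, the section $\widetilde\psi$ extends to the first-order isomonodromic deformation in every direction, and \autoref{lemma:atiyah-vanishing-h1} then says $\nabla\widetilde\psi$ induces the zero map $H^1(C,T_C(-D))\to H^1(C,\on{Hom}(\widehat{E}_0^\rho,\widehat{E}_0^{\rho'}))$. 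Serre-dually, the map $\on{Hom}(\widehat{E}_0^{\rho'},\widehat{E}_0^\rho)\otimes\omega_C\to\omega_C^{\otimes 2}(D)$ obtained from $\nabla\widetilde\psi$ kills $H^0$; if $\nabla\widetilde\psi\neq 0$ this would force $\on{Hom}(\widehat{E}_0^{\rho'},\widehat{E}_0^\rho)\otimes\omega_C$ to fail generic global generation. The hypothesis $g>rr'=r^2$ enters here, not through a dimension count on Homs, but to guarantee that $\widehat{E}_0^\rho$ and $\widehat{E}_0^{\rho'}$ are semistable (via \cite[Corollary 6.1.2]{landesmanL:geometric-local-systems} and \autoref{lemma:dual-hat}), so that their Hom bundle tensored with $\omega_C$ is semistable of slope $2g-2$ and rank $r^2<g$, hence generically globally generated by \cite[Proposition 6.3.1(b)]{landesmanL:geometric-local-systems}. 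This contradiction gives $\nabla\widetilde\psi=0$, and a flat map of the underlying bundles is a map of local systems on $C^\circ$.
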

\begin{proof}
Let $\rho, \rho'$ be the monodromy representations of $\mathbb{U}, \mathbb{U}'$, and let $E^\rho_\star, E^{\rho'}_\star$ be the parabolic bundles associated to these local systems. By \autoref{thm:vector-bundle-reconstruction}, there exists an isomorphism $\widehat E^\rho_0\simeq \widehat E^{\rho'}_0$, and hence $r=r'$. 
%Note that by Poincar\'e duality, the local systems $\mathbb{U}^\vee, (\mathbb{U}')^\vee$ satisfy the hypotheses of the theorem as well, and hence $\widehat E^{\rho^\vee}_0\simeq \widehat E^{(\rho')^\vee}_0$. By \cite[(3.1)]{yokogawa:infinitesimal-deformation}, the bundles $E^{\rho}_0, E^{\rho'}_0$ are thus isomorphic as well.

We now prove \eqref{equation:hom-isomorphism} is
 injective. Indeed, a map $\mathbb{U}|_{C^\circ}\to  \mathbb{U}'|_{C^\circ}$ is determined by the induced map $\widehat{E}_0^\rho\to \widehat{E}_0^{\rho'}$, which is in turn determined by the induced map $$H^0(C, \widehat{E}_0^\rho\otimes \omega_C(D))\to H^0(C, \widehat{E}_0^{\rho'}\otimes \omega_C(D)),$$ as the vector bundles in question are globally generated by \autoref{proposition:globally-generated}.

To prove surjectivity of \eqref{equation:hom-isomorphism}, consider a map $\psi:
\on{GH}_m(W_1R^1\pi^\circ_*\mathbb{U})\to \on{GH}_m(W_1R^1\pi^\circ_*\mathbb{U}')$. By \autoref{thm:vector-bundle-reconstruction}, it induces a map $\widetilde\psi: \widehat{E}_0^\rho\to \widehat{E}_0^{\rho'}.$ We must show this map is flat. Equivalently, we wish to show that the map $$\nabla\widetilde\psi: T_C(-D)\to \on{Hom}(\widehat E^\rho_0, \widehat E^{\rho'}_0)$$ sending a vector field $X$ to $\nabla_X\widetilde\psi$ is identically zero. Here $\nabla_X$ is differentiation along $X$, using the connection $\nabla$ on $\on{Hom}(\widehat E^\rho_0, \widehat E^{\rho'}_0)$. When $D=\emptyset$ this is immediate by the functoriality of the Narasimhan-Seshadri correspondence, so we need only consider the case $D\neq \emptyset$.

By \autoref{lemma:atiyah-vanishing-h1}, we know that $\nabla\widetilde \psi$ induces the zero map $$H^1(C, T_C(-D))\to H^1(\on{Hom}(\widehat E^\rho_0, \widehat E^{\rho'}_0)),$$ as $m$ is general and hence $\psi$ extends to a first-order neighborhood of $m$. Serre-dually, the map $$\on{Hom}(\widehat E^{\rho'}_0, \widehat E^{\rho}_0)\otimes \omega_C\to \omega_C^{\otimes 2}(D)$$ obtained by dualizing $\nabla\widetilde \psi$ and tensoring with $\omega_C$ induces zero on $H^0$. Thus if $\nabla\widetilde \psi$ is non-zero, the bundle $\on{Hom}(\widehat E^{\rho'}_0, \widehat E^{\rho}_0)\otimes \omega_C$ is not generically globally generated.

%\todo{\will{This part of the proof does not match the statement.}}
Now as $g>rr'=r^2$, $E^{(\rho')^\vee}_0$ and $E^{\rho^\vee}_0$ are semistable,
by \cite[Corollary 6.1.2]{landesmanL:geometric-local-systems}; hence by \autoref{lemma:dual-hat}, the same is true for $\widehat E^{\rho'}_0, \widehat E^{\rho}_0$. As these bundles
are isomorphic by the first paragraph above, we have that $\on{Hom}(\widehat E^{\rho'}_0, \widehat E^{\rho}_0)\otimes \omega_C$ is semistable of slope $2g-2$ and rank $rr'=r^2$. As
$g> rr'$ it is thus generically globally generated by 
\cite[Proposition 6.3.1(b)]{landesmanL:geometric-local-systems}, whence the proof is complete.
\end{proof}
%In particular, we have
%\begin{theorem}
%	\label{theorem:self-dual}
%Let $\mathbb{U}, \mathbb{U}'$ be unitary local systems on $\mathscr{C}^\circ$. Suppose the local systems $W_1R^1\pi^\circ_*\mathbb{U}\simeq W_1R^1\pi^\circ_*\mathbb{U}'$, and $g>r^2$. Then $\mathbb{U}|_{C^\circ}\simeq\mathbb{U}'|_{C^\circ}$.	
%\end{theorem}
%\begin{proof}
%	By \autoref{proposition:self-dual-bundles-iso}, we have that for generic $m$,
%	the bundles $E^\rho_0$ and $E^{\rho^\vee}_0$ on $\mathscr{C}_m$ are
%	isomorphic. By \autoref{proposition:self-dual-reps-iso}, we thus have that
%	$\rho$ is self-dual. Now suppose $\rho$ is symplectically
%	(resp.~orthogonally) self-dual. Then by Poincar\'e duality,
%	$W_1R^1\pi^\circ_*\mathbb{V}$ is orthogonally (resp.~symplectically)
%	self-dual. It suffices to show that $W_1R^1\pi^\circ_*\mathbb{V}$ is not
%	symplectically (resp.~orthogonally) self-dual as well. But by
%	\autoref{theorem:simplicity-of-VHS}, $W_1R^1\pi^\circ_*\mathbb{V}$ is
%	irreducible, so $$H^0(\on{Hom}(W_1R^1\pi^\circ_*\mathbb{V},
%	(W_1R^1\pi^\circ_*\mathbb{V})^\vee))$$ is at most one-dimensional by
%	Schur's lemma, so $W_1R^1\pi^\circ_*\mathbb{V}$ cannot be both
%	symplectically and orthogonally self-dual.
%\end{proof}
The following will not be used in what follows, but we felt it might be of independent interest.

\begin{construction}
	With notation as in \autoref{notation:versal-family-of-covers}, let
	$\mathbb U$ be a unitary local system on $\mathscr C^\circ$ of rank $r$.
	Let $g> r^2$. Let $W_1R^1\pi_*^\circ \mathbb{U}$
	be the above defined $\mathbb{C}$-VHS 
	and let  $m\in \mathscr{M}$ be general, with $C^\circ=\mathscr{C}^\circ_m,
	C=\mathscr{C}_m$. 
	We next sketch how to directly reconstruct the connection on
	$\widehat E_0$, where $E_\star$ is the parabolic bundle associated to
	$\mathbb{U}|_{C^\circ}$.

	We may recover the connection from the restriction of $W_1R^1\pi_*^\circ \mathbb{U}$ to a small neighborhood of $m$, though we do not know how to do so from $\on{GH}_m(W_1R^1\pi_*^\circ \mathbb{U})$. 

Recall that the data of a logarithmic connection on a bundle $F$ is the same as an $\mathscr{O}_C$-linear splitting $s$ of the natural map $$\on{At}_{(C,D)}(F)\to T_C(-D)$$ (see e.g.~\cite[Proposition 3.1.6]{landesmanL:geometric-local-systems}). The induced map $$H^1(C, T_C(-D))\to H^1(C, \on{At}_{(C,D)}(F))$$ may be interpreted as the map sending a first-order deformation $(\widetilde C, \widetilde D)$ of $(C,D)$ to the triple $(\widetilde C, \widetilde D, \widetilde E)$, where $\widetilde E$ is the isomonodromic deformation of $E$ \cite[Proposition 3.5.7]{landesmanL:geometric-local-systems}. This latter map may be recovered (taking $F=\widehat E_0$ from $W_1R^1\pi_*^\circ \mathbb{U}$ if $g\geq r^2$, by performing the construction of \autoref{thm:vector-bundle-reconstruction} in families. Serre-dually, we may recover the equivalent data of the Serre-dual map $$H^0(C, (\on{At}_{(C,D)}(\widehat E_0))^\vee\otimes \omega_C)\to H^0(C, \omega_C^{\otimes 2}(D)).$$ Now $\omega_C^{\otimes 2}(D)$ is globally generated, and $(\on{At}_{(C,D)}(\widehat E_0))^\vee\otimes \omega_C$  is generically globally generated by similar arguments to those in the proof of \autoref{proposition:functoriality}. 

As a map of generically globally generated vector bundles is determined by the induced map on global sections, the argument is complete.
\end{construction}

\subsection{Simplicity of the monodromy representation}

We now prove that the monodromy representation is simple.
\begin{theorem}
	\label{theorem:simplicity-of-VHS}
With notation as in \autoref{notation:versal-family-of-covers}, let $\rho: H\to \on{GL}_r(\mathbb{C})$ be an irreducible $H$-representation, and let $\mathbb{V}$ be the
corresponding local system on $\mathscr{C}^\circ$. %Let $r'$ be the maximal dimension of an irreducible $H$-subrepresentation of $\on{Hom}(\rho, \rho)$, and suppose $g > \on{max}(2r+2, r')$.
Suppose that either $n=0$ and $g>2r+1$, or $n$ is arbitrary and $g> \on{max}(2r+1, r^2)$.
Then the variation of Hodge structure $W_1R^1\pi^\circ_*\mathbb{V}$ has simple
monodromy group, acting irreducibly.
\end{theorem}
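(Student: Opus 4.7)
The plan is to combine the dichotomy of \autoref{lemma:simple-or-reducible} with the generic Torelli statement \autoref{proposition:functoriality}. First I would observe that $W^1R^1\pi^\circ_*\mathbb{V}$ is a pure integral $K$-VHS of weight one (where $K$ is the number field generated by the character values of $\rho$, and integrality descends from $R^1\pi'_*\mathbb{Z}$), so it has at most two nonvanishing Hodge numbers, namely $h^{1,0}$ and $h^{0,1}$. The last sentence of \autoref{lemma:simple-or-reducible} then yields a clean dichotomy: either $\on{GH}_m(W^1R^1\pi^\circ_*\mathbb{V})$ splits non-trivially as a direct sum in $\on{IVHS}(\mathscr{M},m)$, or the identity component of the algebraic monodromy group is simple and acts irreducibly. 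The rest of the proof will consist of ruling out the splitting alternative.

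To do so I would apply \autoref{proposition:functoriality} with $\mathbb{U}=\mathbb{U}'=\mathbb{V}$. Its numerical hypotheses are met under our assumptions: $g>2r+1$ gives $g\geq 2+2r$, and the auxiliary hypothesis $D=\emptyset$ holds when $n=0$, while $g>r^2=rr'$ holds when $n$ is arbitrary. Irreducibility of $\mathbb{V}|_{C^\circ}$ follows from the surjectivity of the composition $\pi_1(C^\circ,c)\to\pi_1(\mathscr{C}^\circ,c)\twoheadrightarrow H$ recorded in \autoref{notation:versal-family-of-covers}, together with the irreducibility of $\rho$.

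Applied to this setup, \autoref{proposition:functoriality} produces a bijection
\[
\on{End}_{\on{LocSys}}(\mathbb{V}|_{C^\circ}) \;\xrightarrow{\sim}\; \on{End}_{\on{IVHS}(\mathscr{M},m)}\!\bigl(\on{GH}_m(W^1R^1\pi^\circ_*\mathbb{V})\bigr).
\]
By Schur's lemma the left-hand side equals $\mathbb{C}$, so the right-hand side contains no non-trivial idempotent. A non-trivial direct sum decomposition of $\on{GH}_m(W^1R^1\pi^\circ_*\mathbb{V})$ in $\on{IVHS}(\mathscr{M},m)$ would produce exactly such an idempotent via the projection onto a summand, contradicting the bijection. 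Hence no such decomposition exists, and the second branch of the dichotomy must hold: the algebraic monodromy of $W^1R^1\pi^\circ_*\mathbb{V}$ is a simple group acting irreducibly.

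The main obstacle does not lie in the present argument but has already been overcome in the generic Torelli statement \autoref{proposition:functoriality}, built on the vector-bundle reconstruction \autoref{thm:vector-bundle-reconstruction}; the proof I propose is essentially a Tannakian translation of that reconstruction into a structural statement on monodromy. The only delicate piece of bookkeeping is to check that ``splitting'' as used in \autoref{lemma:simple-or-reducible} (a direct sum decomposition in $\on{IVHS}(\mathscr{M},m)$) is exactly the kind of decomposition controlled by endomorphisms via \autoref{proposition:ivhs-tannakian}, and hence is exactly what the Hom-bijection of \autoref{proposition:functoriality} rules out.
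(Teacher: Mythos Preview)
Your proposal is correct and follows essentially the same approach as the paper's own proof: both apply \autoref{lemma:simple-or-reducible} to reduce to showing that $\on{GH}_m(W^1R^1\pi^\circ_*\mathbb{V})$ does not split, and then invoke \autoref{proposition:functoriality} with $\mathbb{U}=\mathbb{U}'=\mathbb{V}$ together with Schur's lemma to rule out the splitting. The only cosmetic difference is that the paper phrases the contradiction as $\dim\on{End}_{\on{IVHS}}\geq 2$ while you phrase it via the existence of a nontrivial idempotent, which amounts to the same thing.
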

\begin{proof}Note first that $\mathbb{V}$ is an integral variation of Hodge structure because every representation of a finite group is defined over the ring of integers over some number field and is polarizable because representations of finite groups are unitary. Thus $W_1R^1\pi_*^\circ\mathbb{V}$ is an integral variation of Hodge structures because these are stable under derived pushforward and passing to subspaces in the weight filtration. (See e.g. \cite[Theorem 4.1.1]{landesmanL:canonical-representations} for a discussion without the integrality condition.)

Now suppose that the conclusion of the theorem is false, i.e.~that either the monodromy group of $W_1R^1\pi^\circ_*\mathbb{V}$ is not simple, or it does not act irreducibly. Then by \autoref{lemma:simple-or-reducible}, $\on{GH}_m(W_1R^1\pi_*^\circ\mathbb{V})$ splits as a direct sum for general $m\in \mathscr{M}$. Thus we have $$\dim \on{Hom}_{\on{IVHS}(\mathscr{M}, m)}(\on{GH}_m(W_1R^1\pi_*^\circ\mathbb{V}), \on{GH}_m(W_1R^1\pi_*^\circ\mathbb{V}))\geq 2.$$ But taking $\mathbb{U}=\mathbb{U}'=\mathbb{V}$ in \autoref{proposition:functoriality}, 
we have $$\dim \on{Hom}_H(\rho, \rho)\geq 2.$$ But this contradicts the irreducibility of $\rho$, by Schur's lemma.
\end{proof}
\begin{remark}
We could have argued using \autoref{thm:vector-bundle-reconstruction}, instead
of \autoref{proposition:functoriality}, as follows. The splitting of
$\on{GH}_m(W_1R^1\pi_*^\circ\mathbb{V})$ implies, by the construction of
\autoref{thm:vector-bundle-reconstruction}, that $\widehat E_0^\rho$ itself
splits as a direct sum. But forthcoming work of Ramirez-Cote \cite{ramirez-cote},
 following ideas of \cite{landesmanL:geometric-local-systems}, shows that $\widehat E_0^\rho$ is stable for irreducible $\rho$ when $g\geq r^2$, contradicting this splitting.
\end{remark}

\begin{corollary}\label{corollary:local-systems-isomorphic}
	With notation as in \autoref{notation:versal-family-of-covers}, let $\rho_i: H\to \on{GL}_{r_i}(\mathbb{C}), i=1,2$ be irreducible representations, and let $\mathbb{V}_1, \mathbb{V}_2$ be the
corresponding local systems on $\mathscr{C}^\circ$. %Let $r'$ be the maximal dimension of an irreducible $H$-representation appearing as a summand of $$\on{Hom}(\rho_1, \rho_1)\oplus \on{Hom}(\rho_2, \rho_2)\oplus \on{Hom}(\rho_1, \rho_2).$$ 
Suppose $g\geq \on{max}(2+2r_1, 2+2r_2)$ and either $D=\emptyset$ or $g> r_1 r_2$. If the local systems $W_1R^1\pi^\circ_*\mathbb{V}_1, W_1R^1\pi^\circ_*\mathbb{V}_2$ are isomorphic, then $\rho_1$ and $\rho_2$ are conjugate.
\end{corollary}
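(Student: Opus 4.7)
The strategy is to reduce the statement directly to \autoref{proposition:functoriality} by lifting an isomorphism of local systems on $\mathscr{M}$ to an isomorphism of the associated weak IVHS at a generic point $m$, and then reading off a conjugation between $\rho_1$ and $\rho_2$ from the resulting isomorphism of unitary local systems on $C^\circ$.

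First I would upgrade the given isomorphism $W^1R^1\pi^\circ_*\mathbb{V}_1 \xrightarrow{\sim} W^1R^1\pi^\circ_*\mathbb{V}_2$ of local systems to an isomorphism of variations of Hodge structure. The tool is the theorem of the fixed part (Deligne/Schmid): both sides are polarizable integral VHS of pure weight $1$ (as discussed in the proof of \autoref{theorem:simplicity-of-VHS}), so their internal $\mathcal{H}om$ is a polarizable VHS of pure weight $0$. A global flat section of such a VHS must be of Hodge type $(0,0)$, i.e.\ a morphism of VHS. Applying this to the given isomorphism of local systems shows that it is automatically an isomorphism of VHS.

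Next, passing to the associated graded with respect to the Hodge filtration and remembering the Kodaira–Spencer action of $T_m\mathscr{M}$, the isomorphism of VHS descends to an isomorphism
\[
\on{GH}_m(W^1R^1\pi^\circ_*\mathbb{V}_1) \;\xrightarrow{\sim}\; \on{GH}_m(W^1R^1\pi^\circ_*\mathbb{V}_2)
\]
in $\on{IVHS}(\mathscr{M},m)$ for general $m$. The hypotheses $g\geq \max(2+2r_1,2+2r_2)$ together with either $D=\emptyset$ or $g>r_1 r_2$ are precisely what is required to apply \autoref{proposition:functoriality} with $\mathbb{U}=\mathbb{V}_1$, $\mathbb{U}'=\mathbb{V}_2$; doing so yields an isomorphism $\mathbb{V}_1|_{C^\circ} \cong \mathbb{V}_2|_{C^\circ}$ of unitary local systems on the punctured curve. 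Under the fixed identification of $\pi_1(C^\circ,c)$ with a group surjecting onto $H$ via which $\rho_1$ and $\rho_2$ are defined, such an isomorphism of local systems is the same datum as an element of $\on{GL}_{r_1}(\mathbb{C})$ conjugating $\rho_1$ to $\rho_2$; in particular $r_1=r_2$ and the two representations are conjugate, as desired.

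The only genuinely non-formal step is the upgrade from an isomorphism of local systems to an isomorphism of VHS, which, while standard, is essential since the corollary is phrased only at the level of local systems; everything else is a direct appeal to \autoref{proposition:functoriality}.
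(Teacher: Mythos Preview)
Your overall strategy matches the paper's exactly: upgrade the isomorphism of local systems to one of variations of Hodge structure, pass to $\on{GH}_m$, and invoke \autoref{proposition:functoriality}. The only difference is in how the upgrade is justified, and here your argument has a small but genuine gap.

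You write that ``a global flat section of such a VHS must be of Hodge type $(0,0)$.'' The theorem of the fixed part only says that the space of global flat sections of $\mathcal{H}om(W^1R^1\pi^\circ_*\mathbb{V}_1, W^1R^1\pi^\circ_*\mathbb{V}_2)$ carries a Hodge structure of weight $0$; it does not say that an individual flat section is pure of type $(0,0)$. In general this fails: one could have nonzero components of type $(1,-1)$ and $(-1,1)$. The paper closes this gap by first invoking \autoref{theorem:simplicity-of-VHS} to see that each $W^1R^1\pi^\circ_*\mathbb{V}_i$ is irreducible. Then the flat Hom space is one-dimensional by Schur, hence pure of some type $(p,-p)$; since both variations have only Hodge types $(1,0)$ and $(0,1)$, a flat morphism of type $(p,-p)$ with $p\neq 0$ cannot be an isomorphism (it would annihilate one Hodge summand), forcing $p=0$. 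Your argument is easily repaired by inserting this irreducibility step, after which it coincides with the paper's proof.
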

\begin{proof}
By \autoref{theorem:simplicity-of-VHS}, 	the local systems $W_1R^1\pi^\circ_*\mathbb{V}_1, W_1R^1\pi^\circ_*\mathbb{V}_2$ are irreducible. Hence, by the theorem of the fixed part applied to the tensor product  $(W_1R^1\pi^\circ_*\mathbb{V}_1)^\vee \otimes W_1R^1\pi^\circ_*\mathbb{V}_2$, any isomorphism between them is necessarily an isomorphism of $\mathbb{C}$-VHS (up to a shift of the weight and Hodge filtrations, but these shifts must vanish by consideration of the weight and Hodge numbers of both sides), and in particular induces an isomorphism $\on{GH}_m(W_1R^1\pi^\circ_*\mathbb{V}_1)\simeq \on{GH}_m(W_1R^1\pi^\circ_*\mathbb{V}_2)$ for any $m\in \mathbb{M}$. Now the result follows by \autoref{proposition:functoriality}.
\end{proof}

\section{Proofs of \autoref{theorem:main-thm-1} and \autoref{theorem:main-thm-2}: big monodromy for $g$ large}
\label{section:main-proofs}

%riga
In this section, we prove our main results, \autoref{theorem:main-thm-1} and
\autoref{theorem:main-thm-2}.
We next describe the possibilities for the connected monodromy groups of the variations of Hodge structure appearing in \autoref{theorem:simplicity-of-VHS} by applying a result
of Deligne (implicit in his classification of Shimura varieties of Abelian type \cite[1.3.6]{deligne1979varietes}, and explicit in \cite[Theorem 0.5.1(b)]{zarhin1984weights}), and proceed to rule out many of these possibilities through several
lemmas. We conclude the proofs in \autoref{subsection:main-proofs}.

\subsection{Describing the possibilities for the monodromy representation}
\label{subsection:monodromy-possibilities}
First, with notation as in \autoref{theorem:simplicity-of-VHS}, the Hodge filtration of $W_1 R^1 \pi_*^\circ \mathbb V$ has only two parts,
and the monodromy group of $W_1 R^1 \pi_*^\circ \mathbb V$ 
is simple by
\autoref{theorem:simplicity-of-VHS}. As it is a normal subgroup of the generic Mumford-Tate group of $W_1 R^1 \pi_*^\circ \mathbb V$, by \cite[Theorem 1 on p.~10]{AndreFixed}, it follows that it must be a simple factor of this group.
It follows from 
\cite[Theorem 0.5.1(b)]{zarhin1984weights} (which
Zarhin attributes to Deligne)
that the monodromy group
is isogenous to $\on{SL}_\nu, \on{Sp}_{2 \nu}, \on{SO}_\nu$
for some $\nu$, acting in one of the following ways:
\begin{enumerate}
	\item the standard representation,
	\item the trivial representation,
	\item the spin or half-spin representation of the spin cover of the group $\on{SO}_\nu$
	\item a wedge power of the standard representation of the group
		$\on{SL}_\nu$.
\end{enumerate}
We first rule out the trivial representation.
\begin{lemma}
	\label{lemma:no-trivial-rep}
	With notation as in \autoref{notation:versal-family-of-covers}, let $\rho: H\to \on{GL}_r(\mathbb{C})$ be an irreducible $H$-representation, and let $\mathbb{V}$ be the
corresponding local system on $\mathscr{C}^\circ$. If $g > 2$, the connected monodromy group of $W_1R^1\pi^\circ_*\mathbb{V}$ is nontrivial.
\end{lemma}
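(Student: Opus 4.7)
The plan is a proof by contradiction via the derivative of the period map. If the connected monodromy group of $W^1R^1\pi^\circ_*\mathbb{V}$ is trivial, the monodromy image is finite; after passing to a finite \'etale cover $\widetilde{\mathscr{M}}\to \mathscr{M}$, the pullback of $W^1R^1\pi^\circ_*\mathbb{V}$ is a trivial local system. Since its structure as a VHS is defined globally, this forces the Hodge filtration to be a constant sub-bundle, and in particular the derivative of the period map must vanish identically. By \autoref{proposition:derivative-of-period-map}, this derivative is identified with the bilinear pairing $B_m^\rho$ of \eqref{key-bilinear-form}, so we would have $B_m^\rho \equiv 0$ on $\mathscr{M}$. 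It therefore suffices to exhibit $m\in \mathscr{M}$ for which $B_m^\rho \neq 0$, giving the desired contradiction.

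When $\rho$ is trivial, $E_0^\rho = \mathscr{O}_C$ and $\widehat{E}_0^\rho = \mathscr{O}_C(-D)$, and $B_m^\rho$ specializes to the cup product $H^0(C,\omega_C) \otimes H^0(C,\omega_C) \to H^0(C,\omega_C^{\otimes 2}(D))$, which is non-zero on a non-hyperelliptic curve of genus $g\geq 3$ by Max Noether's theorem. For non-trivial irreducible $\rho$, the underlying sheaf map $\widehat{E}_0^\rho \otimes (E_0^\rho)^\vee \to \mathscr{O}_C$ is the composition of the inclusion $\widehat{E}_0^\rho \hookrightarrow E_0^\rho$ with the evaluation pairing, which is the non-degenerate trace pairing at every point off $D$. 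A Riemann-Hurwitz count on $j_*\rho$ shows $\dim W^1H^1(C^\circ,\rho) > 0$ for $g \geq 2$. Writing $A := H^0(C,\widehat{E}_0^\rho \otimes \omega_C(D))$ and $B := H^0(C,(E_0^\rho)^\vee \otimes \omega_C)$, if both $A$ and $B$ are non-zero then a generic pair $(s,t) \in A \times B$ will be non-vanishing at a common point $p \notin D$, and the non-degeneracy of the trace pairing at $p$ allows us to arrange $\langle s(p), t(p) \rangle \neq 0$, whence $B_m^\rho(s,t) \neq 0$.

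The main obstacle is the degenerate subcase in which one of $A, B$ vanishes, equivalently when the weight-one Hodge structure $W^1H^1(C^\circ,\rho)$ is pure of Hodge type $(1,0)$ or $(0,1)$. In this regime the derivative-of-period-map approach is tautologically zero and gives no contradiction. To handle it I would argue by symmetry: one of $A$ or $B$ is non-zero since $W^1H^1(C^\circ, \rho) \neq 0$, and the Yokogawa isomorphism $(E_0^\rho)^\vee \simeq \widehat{E}_0^{\rho^\vee}(D)$ (from \autoref{lemma:dual-hat}) lets us pass to the dual representation $\rho^\vee$ (still irreducible, with the same $W^1R^1\pi^\circ_*(\cdot)$ monodromy image as $\rho$ by unitary duality), swapping the roles of $A$ and $B$ until both are seen to be non-zero simultaneously. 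If this direct swap fails, the fallback is to use that $W^1R^1\pi^\circ_*\mathbb{V}$ sits as a summand of the Prym-type local system $R^1\pi'_*\mathbb{C}$, whose monodromy is known to be infinite for $g\geq 2$, and to rule out that an entire infinite-monodromy contribution can be absorbed into the complementary summands.
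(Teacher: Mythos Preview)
Your approach via the period map is substantially more involved than the paper's, and it has a genuine gap. The paper's argument is three lines: after passing to a finite cover so that the monodromy group is connected, one invokes \autoref{theorem:simplicity-of-VHS} to conclude that the monodromy representation is irreducible, and then observes that $\dim W^1R^1\pi_*^\circ\mathbb{V}_m \geq (2g-2)r > 1$, so the trivial group cannot be acting irreducibly. (Note that this proof implicitly uses the stronger hypothesis $g>2r+1$ from \autoref{theorem:simplicity-of-VHS}, which is in force wherever the lemma is applied.)

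Your argument, by contrast, tries to show directly that $B_m^\rho \neq 0$, and this step is incomplete. For non-trivial $\rho$, knowing that the fiberwise pairing $\widehat E_0^\rho \otimes (E_0^\rho)^\vee \to \mathscr{O}_C$ is non-degenerate at $p \notin D$ does \emph{not} let you ``arrange $\langle s(p), t(p)\rangle \neq 0$'': the sections $s,t$ are global, and their values at $p$ could land in mutually orthogonal subspaces for every choice. To push this through you would need something like generic global generation of $(E_0^\rho)^\vee\otimes\omega_C$, which is exactly what the paper establishes in \autoref{proposition:globally-generated}---but only under the stronger hypothesis $g \geq 2r+2$.

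Two further remarks. First, the ``degenerate subcase'' you worry about never occurs: by \autoref{lemma:rr-lower-bound}, both $\dim A$ and $\dim B$ are at least $(g-1)r > 0$ once $g\geq 2$, so neither Hodge piece vanishes. Second, your proposed duality swap would not have helped anyway: the Yokogawa isomorphism gives $A_\rho \cong B_{\rho^\vee}$ and $B_\rho \cong A_{\rho^\vee}$, so passing to $\rho^\vee$ merely trades one vanishing space for the other. And the fallback to $R^1\pi'_*\mathbb{C}$ only shows that \emph{some} isotypic summand has infinite monodromy (the trivial one already does, via $\on{Sp}_{2g}(\mathbb{Z})$), not that the $\rho$-summand does.
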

\begin{proof}
	We are free to pass to finite \'etale covers of the base
$\mathscr M$ in our setup. Therefore, we may assume the Zariski closure of
monodromy is already connected. We also know the monodromy representation is irreducible 
by \autoref{theorem:simplicity-of-VHS}.
Further, $$\dim W_1R^1\pi_*^\circ\mathbb{V}_m\geq (2g-2)\on{rk}(\mathbb{V})>1,$$ since we are assuming $g \geq 2$,
so we conclude that the
monodromy group is not acting via the trivial representation.
\end{proof}

\subsection{Rank estimates}

\begin{lemma}\label{lemma:rank-recovery-1}
		 With notation as in \autoref{notation:versal-family-of-covers}, let $\mathbb{U}$ be a unitary local
		 system on $\mathscr{C}^\circ$ and $m\in \mathscr{M}$ a general point. Let  $\rho$ be the monodromy representation of $\mathbb{U}|_{C^\circ}$ and
		 let $E_\star$ be the associated parabolic bundle on $(C,D)$. Let
		 $r=\rk(\mathbb{U})$, and assume $g\geq 2+2r$. For $p$ in $C$,
		 consider the one-dimensional space of 
$H^0(C, \omega_C^{\otimes 2}(D))^\vee = T_m\mathscr{M}$
		 spanned by the functional sending a global
		 section to its value at $p$, and then composing with a linear map
		 $\omega_C^{\otimes 2}(D)|_p \to \mathbb C$. 

		 For $p$ a general point of $C$ and $\alpha_p$ an element of the associated one-dimensional subspace of $ T_m\mathscr{M}$, the rank of $dP_m^\rho(\alpha_p) \in \on{Hom}(H^0(C, \widehat{E}_0\otimes\omega_C(D)), H^1(C, E_0))$ is equal to $r$.
\end{lemma}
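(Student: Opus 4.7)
The plan is to unfold $dP_m^\rho(\alpha_p)$ using \autoref{proposition:derivative-of-period-map}, factor it through evaluation at $p$, and then use global generation results to pin down the rank.

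First, I would invoke \autoref{proposition:derivative-of-period-map}: $dP_m^\rho$ is identified with the bilinear form $B_m^\rho$ from \eqref{key-bilinear-form}, which is built from the sheaf map $\widehat{E}_0 \otimes (E_0)^\vee \to \mathscr{O}_C$ (obtained by composing the inclusion $\widehat{E}_0 \hookrightarrow E_0$ with the tautological pairing $E_0 \otimes E_0^\vee \to \mathscr{O}_C$) twisted by $\omega_C^{\otimes 2}(D)$. Since $\alpha_p \in T_m\mathscr{M} = H^0(C,\omega_C^{\otimes 2}(D))^\vee$ arises as evaluation at $p$ followed by a linear functional $\lambda_p \colon \omega_C^{\otimes 2}(D)|_p \to \mathbb{C}$, unwinding the construction shows that $dP_m^\rho(\alpha_p)$, viewed as a bilinear form on $H^0(C,\widehat{E}_0 \otimes \omega_C(D)) \otimes H^0(C, E_0^\vee \otimes \omega_C)$, is given by $(s,t) \mapsto \lambda_p\bigl(\langle s(p), t(p)\rangle\bigr)$, where $\langle -, -\rangle$ denotes the fiberwise pairing.

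Second, since $p$ is general we may assume $p \notin D$, so that $\widehat{E}_0|_p = E_0|_p$ and the pairing at $p$ is the genuine trace pairing on the $r$-dimensional vector space $E_0|_p$. Thus $dP_m^\rho(\alpha_p)$ factors as
\[
H^0(C, \widehat{E}_0 \otimes \omega_C(D)) \xrightarrow{\on{ev}_p} (\widehat{E}_0 \otimes \omega_C(D))|_p \xrightarrow{\;\sim\;} \bigl((E_0^\vee \otimes \omega_C)|_p\bigr)^\vee \xrightarrow{(\on{ev}_p)^\vee} H^0(C, E_0^\vee \otimes \omega_C)^\vee = H^1(C, E_0),
\]
where the middle arrow is the isomorphism between $r$-dimensional vector spaces induced by $\lambda_p$ and the trace pairing.

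Third, I would invoke \autoref{proposition:globally-generated} twice to show both evaluation maps are surjective (and hence the Serre dual of the second is injective): once directly to conclude $\widehat{E}_0 \otimes \omega_C(D)$ is globally generated, and once applied to the parabolic dual $F_\star$ of $E_\star$, giving that $E_0^\vee \otimes \omega_C = \widehat{F}_0 \otimes \omega_C(D)$ is globally generated (as in the proof of \autoref{thm:vector-bundle-reconstruction}, via \cite[(3.1)]{yokogawa:infinitesimal-deformation}). Both applications require $g \geq 2+2r$, which is exactly our hypothesis. Hence the composition has image of dimension equal to that of the middle isomorphism, namely $r$. The only subtlety is verifying the factorization claim via the trace pairing; once that is in place, everything else is bookkeeping, and the required genericity of $p$ is used only to ensure $p \notin D$ so that $\widehat{E}_0|_p = E_0|_p$.
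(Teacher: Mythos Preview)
Your proposal is correct and uses the same ingredients as the paper's proof—namely \autoref{proposition:derivative-of-period-map} to identify $dP_m^\rho$ with $B_m^\rho$, and \autoref{proposition:globally-generated} (applied both to $E_\star$ and to its parabolic dual) to control the evaluation maps at $p$. The paper packages things slightly differently: rather than factoring $dP_m^\rho(\alpha_p)$ directly as $(\on{ev}_p)^\vee \circ (\text{iso}) \circ \on{ev}_p$, it first invokes \autoref{thm:vector-bundle-reconstruction} to identify the image of the global map $\psi_m^\rho$ with $\widehat{E}_0\otimes\omega_C(D)$ (a rank-$r$ bundle), then observes that the fiber of $\psi_m^\rho$ at $p$ is exactly $dP_m^\rho(\alpha_p)$ by adjunction, and finally uses that the rank of a morphism of vector bundles equals the rank of its fiber at a general point. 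Your direct factorization unpacks precisely the content of \autoref{thm:vector-bundle-reconstruction} at the single point $p$, so the two arguments are the same up to whether one cites that theorem or reproves the relevant piece of it inline.
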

Here $dP_m^\rho$ is defined as in \autoref{section:parabolic-review}, immediately above \autoref{proposition:derivative-of-period-map}.

\begin{proof} \autoref{thm:vector-bundle-reconstruction} shows that
	$\widehat{E}_0\otimes\omega_C(D)$, a sheaf of rank $r$, is the image of
	the map $\psi^\rho_m$, defined as in the statement of that theorem. For any map $f: V \to W$ vector bundles, the rank of the
	image of $f$ is equal to the rank of the fiber $f_q: V_q \to W_q$ of $f$
	at a general point $q$. So it suffices to check that the rank of the
	fiber of $\psi^\rho_m$ at a general point $p$ in $C$ is equal to the rank of $dP_m(\alpha_p)$.

The fiber of the evaluation map $\on{ev}: H^0(C, \omega_C^{\otimes 2}(D))\otimes \mathscr{O}_C\to \omega_C^{\otimes 2}(D)$ at $p$ is, up to scalars, the linear form $\alpha_p \in (H^0(C, \omega_C^{\otimes 2}(D))\otimes \mathscr{O}_C)^\vee$.

Since $\psi^\rho_m$ is the composition of $\theta_m^\rho$ with $ \id \otimes \on{ev}$,
the fiber of $\psi^\rho_m$ is the composition of $\theta_m^\rho$ with
$\id\otimes \alpha_p$. Previously, in \eqref{equation:theta-map},
we defined $\theta_m^\rho$ as the adjoint of $dP_m^\rho
\colon H^0(C, \omega_C^{\otimes 2}(D))^\vee \to \on{Hom}(H^0(C,
\widehat{E}_0\otimes\omega_C(D)), H^1(C, E_0))$, namely by $$\theta^\rho_m: H^0(C,
\widehat{E}_0^\rho\otimes \omega_C(D))\to H^1(C, E_0)\otimes H^0(C,
\omega_C^{\otimes 2}(D)).$$ The defining property of this adjoint is that
composing with $\id \otimes \alpha$ for a linear form $\alpha \in H^0(C,
\omega_C^{\otimes 2}(D))^\vee$ gives a map $H^0(C, \widehat{E}_0^\rho\otimes
\omega_C(D))\to H^1(C, E_0)$ defined by $ d P_m^\rho(\alpha)$, so the
composition of $\theta_m^\rho$ with $\id \otimes \alpha_p$ is $dP_m^\rho(\alpha_p)$, as desired.  \end{proof}

\begin{remark}
The subspaces of $H^0(C, \omega_C^{\otimes 2}(D))^\vee=H^1(C, T_C(-D))$ spanned by the $\alpha_p$ of \autoref{lemma:rank-recovery-1} are known as \emph{Schiffer variations}, see e.g.~\cite[1.2.4]{collino-pirola}. There is some history of using (variants of) Schiffer variations for Torelli-style results, e.g.~in \cite{voisin2022schiffer}.
\end{remark}

\begin{corollary}\label{corollary:rank-recovery-2}
With notation as above, let $\mathbb{U}$ be a unitary local system on $C^\circ$, with monodromy representation $\rho$ and associated parabolic bundle $E_\star$ on $(C,D)$. Let $r=\rk(\mathbb{U})$, and assume $g\geq 2+2r$.  We have 
$$ r \geq \inf \{ \operatorname{rank} ( dP_m^\rho (\alpha)) \mid \alpha \in T_m\mathscr{M} , dP_m^\rho (\alpha) \neq 0 \} .$$
\end{corollary}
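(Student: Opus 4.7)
The proof is a direct corollary of \autoref{lemma:rank-recovery-1}. The plan is as follows.

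First, I would dispense with the trivial case $r=0$, where the statement holds vacuously (there are no nonzero local sections of a rank $0$ bundle, so the right-hand side is $\inf\emptyset = +\infty$—actually one checks that the statement is trivially true, or one simply assumes $r \geq 1$, since otherwise $\mathbb{U}$ is the zero local system and everything vanishes). Assume $r \geq 1$.

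Next, I would invoke \autoref{lemma:rank-recovery-1} to produce, for every general point $p \in C$, a one-dimensional subspace of $T_m\mathscr{M}$ spanned by some $\alpha_p$, with the property that the operator $dP_m^\rho(\alpha_p) \in \on{Hom}(H^0(C, \widehat{E}_0\otimes \omega_C(D)), H^1(C, E_0))$ has rank exactly $r$. In particular, since $r \geq 1$, we have $dP_m^\rho(\alpha_p) \neq 0$, so $\alpha_p$ lies in the indexing set $\{\alpha \in T_m\mathscr{M} : dP_m^\rho(\alpha) \neq 0\}$ of the infimum.

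Finally, since an infimum over a set containing an element of rank $r$ is at most $r$, we conclude
\begin{equation*}
\inf \{ \operatorname{rank}(dP_m^\rho(\alpha)) \mid \alpha \in T_m\mathscr{M}, \, dP_m^\rho(\alpha) \neq 0\} \leq r,
\end{equation*}
which is the claimed inequality. There is no substantive obstacle here; the entire content is packaged in \autoref{lemma:rank-recovery-1}, and this corollary merely reformulates the existence of a Schiffer variation of rank $r$ as a bound on the infimal rank of nonzero derivatives.
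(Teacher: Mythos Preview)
Your proof is correct and follows exactly the same approach as the paper: invoke \autoref{lemma:rank-recovery-1} to exhibit a nonzero $\alpha_p$ with $\operatorname{rank}(dP_m^\rho(\alpha_p))=r$, so that $r$ lies in the set whose infimum is taken. The paper's version is simply terser, noting only that $r$ is a member of this set (since $r>0$) and hence bounds the infimum from above.
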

\begin{proof} This follows from \autoref{lemma:rank-recovery-1} since any member of a set is at least its minimal value, and $r$ is a member of this set because $r =\operatorname{rank} ( dP_m^\rho (\alpha_p))$ with $dP_m^\rho (\alpha_p)\neq 0$, as $r>0$. \end{proof}

\begin{lemma}\label{lemma:rr-lower-bound} For $\rho$ a unitary representation of $\pi_1(C- D)$ of rank $r$, we have
$$ \dim H^1(C, E_0^\rho) \geq (g-1) r $$
$$ \dim H^0 (C, \widehat{E}_0^\rho \otimes \omega_C (D)) \geq (g-1) r $$
\end{lemma}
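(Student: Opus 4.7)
The plan is to deduce both inequalities from Riemann--Roch together with the fact that, for a unitary representation, the parabolic degree of the associated parabolic bundle is zero, which forces the degree of the underlying vector bundle $E_0^\rho$ to be non-positive.

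First I would establish the bound on $\dim H^1(C,E_0^\rho)$. Since $\rho$ is unitary, the Mehta--Seshadri correspondence recalled in \autoref{notation:rep-to-vector-bundle} gives that $E_\star^\rho$ has parabolic degree zero, i.e.
\begin{equation*}
\deg(E_0^\rho) + \sum_{j,i}\alpha^i_j\dim(E_j^i/E_j^{i+1}) = 0.
\end{equation*}
Since all parabolic weights $\alpha^i_j$ lie in $[0,1)$, the sum above is non-negative, so $\deg(E_0^\rho)\leq 0$. Applying Riemann--Roch then yields
\begin{equation*}
h^0(C,E_0^\rho) - h^1(C,E_0^\rho) = \deg(E_0^\rho) + r(1-g),
\end{equation*}
and rearranging gives
\begin{equation*}
h^1(C,E_0^\rho) = h^0(C,E_0^\rho) - \deg(E_0^\rho) + r(g-1) \geq r(g-1),
\end{equation*}
as both $h^0(C,E_0^\rho)$ and $-\deg(E_0^\rho)$ are non-negative.

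For the second inequality, the key input is the identification $\widehat{E}_0^\rho(D)\simeq (E_0^{\rho^\vee})^\vee$ from \cite[(3.1)]{yokogawa:infinitesimal-deformation}, already invoked in the proof of \autoref{lemma:dual-hat}. This gives
\begin{equation*}
\widehat{E}_0^\rho \otimes \omega_C(D) \simeq (E_0^{\rho^\vee})^\vee \otimes \omega_C,
\end{equation*}
so by Serre duality
\begin{equation*}
\dim H^0(C,\widehat{E}_0^\rho\otimes\omega_C(D)) = \dim H^1(C,E_0^{\rho^\vee}).
\end{equation*}
Since $\rho^\vee$ is also unitary of rank $r$, applying the first inequality to $\rho^\vee$ finishes the proof.

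There is no real obstacle here; the only thing to be careful about is making sure the sign conventions on the parabolic weights and Deligne's canonical extension match, so that $\deg(E_0^\rho)\leq 0$ rather than $\geq 0$. Once that is fixed, both bounds follow immediately from Riemann--Roch and Serre duality via the Yokogawa identification.
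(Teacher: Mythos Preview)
Your proof is correct and follows essentially the same approach as the paper: both use that parabolic degree zero forces $\deg E_0^\rho\leq 0$, apply Riemann--Roch for the first bound, and then reduce the second bound to the first via Serre duality applied to $\rho^\vee$. The paper cites a parabolic Serre duality statement from \cite{landesmanL:geometric-local-systems} for the second step, while you obtain the same identification more directly from the Yokogawa isomorphism $\widehat{E}_0^\rho(D)\simeq (E_0^{\rho^\vee})^\vee$ together with ordinary Serre duality; these are equivalent.
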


\begin{proof} Since $E^\rho_\star$ has parabolic degree $0$, the degree of
	$E_0^\rho$ is  $\leq 0$ and so by Riemann-Roch, $H^1(C, E^\rho_0) \geq
	(g-1)r$.
	By Serre duality, \cite[Proposition
	2.6.6]{landesmanL:geometric-local-systems},
	$\dim H^0(C, \widehat{E}_0^\rho \otimes \omega_C (D)) = \dim H^1(C,
	\widehat{E}_0^{\rho^\vee})$. The latter is $\geq (g-1)r$ by the first part.
\end{proof}

Combining \autoref{corollary:rank-recovery-2} and \autoref{lemma:rr-lower-bound}, we obtain an inequality expressed only in terms of the weak IVHS of $W_1R^1\pi^\circ_*\mathbb{V}$, i.e.~$$\dim H^1(C, E_0^\rho)\geq (g-1)\inf \{ \operatorname{rank} ( dP_m^\rho (\alpha)) \mid \alpha \in T_m\mathscr{M} , dP_m^\rho (\alpha) \neq 0 \},$$
$$\dim H^0 (C, \widehat{E}_0^\rho \otimes \omega_C (D))\geq (g-1)\inf \{ \operatorname{rank} ( dP_m^\rho (\alpha)) \mid \alpha \in T_m\mathscr{M} , dP_m^\rho (\alpha) \neq 0 \}.$$

\subsection{Ruling out non-standard minuscule representations}

In this section we show that if the monodromy group of $W_1R^1\pi_*^\circ
\mathbb{V}$ is $\sl_\nu$ or $\so_\nu$, it must act via the standard representation. We will use the notation discussed in \autoref{subsection:weights}.

\begin{lemma}
	\label{lemma:wedge-bound}
	With notation as in \autoref{notation:versal-family-of-covers}, let $\rho: H\to \on{GL}_r(\mathbb{C})$ be an irreducible $H$-representation, and let $\mathbb{V}$ be the
corresponding local system on $\mathscr{C}^\circ$. Suppose $g\geq 2r+2$.
	If the connected monodromy group of $W_1R^1\pi^\circ_*\mathbb{V}$ is isogenous to $\on{SL}_\nu$ acting via $\wedge^k$ of the
	standard representation, then either $k = 1$ or $k = \nu-1$.
%	unless $r = 1, g = 2,$ or $r = 2, g =4$, or $r = 2, g = 6$.
\end{lemma}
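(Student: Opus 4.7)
The plan is to derive a numerical contradiction by combining the weight analysis of \autoref{ex:wedge-example} with the rank estimate \autoref{corollary:rank-recovery-2} and the dimension lower bounds of \autoref{lemma:rr-lower-bound}. Assume for contradiction that the connected monodromy group is isogenous to $\on{SL}_\nu$ acting via $\wedge^k$ of the standard representation with $1 < k < \nu-1$. Since $W^1R^1\pi^\circ_*\mathbb{V}$ is a pure weight-$1$ VHS, the functor $\on{GH}_m$ produces a graded vector space whose nonzero grades lie in $\{0,1\}$. Thus the weights of $\wedge^k\circ\iota(1)$ are all $0$ or $1$, and by the combinatorial analysis in \autoref{ex:wedge-example}, the weights of the standard representation on $V=\mathbb{C}^\nu$ are either $\{1/k,\ldots,1/k,-(k-1)/k\}$ or $\{0,\ldots,0,1\}$. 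In both possibilities one weight space of $V$ is one-dimensional and the other is $(\nu-1)$-dimensional.

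The image of $T_m\mathscr{M}$ inside $\mathfrak{sl}_\nu$ consists of weight $-1$ elements, which are maps between the two weight spaces of $V$. Hence every nonzero element of this image has rank exactly $1$ when acting on $V$. The key linear algebra step is to compute the rank of the induced derivation on $\wedge^k V$. Working in a basis adapted to the weight decomposition, a rank-$1$ weight $-1$ element $A\in\mathfrak{sl}_\nu$ induces a map from the weight-$1$ part of $\wedge^k V$ to the weight-$0$ part which, up to identifications, is contraction against a nonzero linear form (in the first case) or wedging with a nonzero vector (in the second case). A direct computation in the adapted basis shows this map has rank exactly $\binom{\nu-2}{k-1}$. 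Applying \autoref{corollary:rank-recovery-2} gives the lower bound
\[
r\ \geq\ \binom{\nu-2}{k-1}.
\]

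Now invoke \autoref{lemma:rr-lower-bound}, which combined with \autoref{proposition:weight-1-hodge-filtration} yields both $\dim F^1\geq (g-1)r$ and $\dim W^1H^1/F^1\geq (g-1)r$. In our setting the two Hodge numbers of $W^1R^1\pi^\circ_*\mathbb{V}$ are (in some order) $\binom{\nu-1}{k}$ and $\binom{\nu-1}{k-1}$, one from the weight-$1$ part of $\wedge^k V$ and the other from the weight-$0$ part. Hence
\[
\min\!\left(\binom{\nu-1}{k},\ \binom{\nu-1}{k-1}\right)\ \geq\ (g-1)\,r\ \geq\ (2r+1)\,r\ \geq\ \bigl(2\tbinom{\nu-2}{k-1}+1\bigr)\tbinom{\nu-2}{k-1},
\]
using $g\geq 2r+2$ and the lower bound on $r$. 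Writing each of $\binom{\nu-1}{k}$ and $\binom{\nu-1}{k-1}$ via Pascal's identity in terms of $\binom{\nu-2}{k-1}$, one checks by elementary binomial arithmetic that this inequality fails for every $2\leq k\leq \nu-2$, the obstruction being that $\binom{\nu-2}{k-1}^2$ grows much faster than $\binom{\nu-2}{k}$ or $\binom{\nu-2}{k-2}$ (for instance, when $k=2$ one needs $\binom{\nu-2}{2}\geq 2(\nu-2)^2$, which fails for all $\nu\geq 3$).

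The main obstacle is the linear algebra rank computation on $\wedge^k V$: verifying precisely that every nonzero degree $-1$ element acts on $V$ with rank one and that the induced derivation on $\wedge^k V$ has rank $\binom{\nu-2}{k-1}$. Once this is established, the final contradiction is a mechanical binomial estimate using $g\geq 2r+2$. A small additional care point is that both $F^1$ and $W^1H^1/F^1$ must exceed $(g-1)r$; handling them simultaneously is what forces the minimum of the two binomial coefficients into the estimate and eliminates any asymmetry between the two cases identified in \autoref{ex:wedge-example}.
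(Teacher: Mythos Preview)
Your proposal is correct and follows essentially the same route as the paper: identify the two possible weight patterns on the standard representation via \autoref{ex:wedge-example}, observe that elements of $T_m\mathscr{M}$ act on $\mathbb{C}^\nu$ with rank $\leq 1$, compute the induced rank on $\wedge^k$ as $\binom{\nu-2}{k-1}$, and then combine \autoref{corollary:rank-recovery-2} with \autoref{lemma:rr-lower-bound}.

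The only substantive difference is in the final numerical step. The paper, after obtaining $\binom{\nu-1}{k}\geq (g-1)\binom{\nu-2}{k-1}$ and $\binom{\nu-1}{k-1}\geq (g-1)\binom{\nu-2}{k-1}$, simply divides each by $\binom{\nu-2}{k-1}$ to get $\frac{\nu-1}{k}\geq g-1$ and $\frac{\nu-1}{\nu-k}\geq g-1$, and then adds reciprocals:
\[
1=\frac{k}{\nu-1}+\frac{\nu-k}{\nu-1}\leq \frac{2}{g-1},
\]
forcing $g\leq 3$. This is cleaner than substituting $g\geq 2r+2$ and $r\geq\binom{\nu-2}{k-1}$ as you do, and it reveals that the argument in fact only needs $g\geq 4$, not the full strength of $g\geq 2r+2$. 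Your route works (since $\frac{\nu-1}{\max(k,\nu-k)}<2\leq 2\binom{\nu-2}{k-1}+1$), but your parenthetical $k=2$ example is misstated: the relevant failed inequality is $\nu-1\geq (2(\nu-2)+1)(\nu-2)$, not $\binom{\nu-2}{2}\geq 2(\nu-2)^2$.
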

\begin{proof}  The identity component of the normalizer of the monodromy group
$\on{SL}_\nu$ is $\on{GL}_\nu/\mu_{\gcd(k,\nu)}$, with $\mu_{\gcd(k,\nu)}$ embedded as scalar matrices, so its Lie algebra is $\mathfrak{gl}_\nu$; in particular we are provided with a natural Lie algebra homomorphism $T_xY\rtimes \mathbb{C}\to \mathfrak{gl}_\nu$ as in \autoref{subsection:weights}. Let $(a_1,\ldots, a_\nu)$ be the weights of the standard representation of $\mathfrak{gl}_\nu$.
	
	We first determine the possible values for the $a_i$ (see also \autoref{ex:wedge-example}). Since the $k$th wedge power of the standard representation has only weights $0$ and $1$,
	as the Hodge structure 
	$W_1 R^1 \pi_*^\circ \mathbb V$ has only two parts, we must have that
	$\sum_{i \in I} a_i \in \{0,1\}$, for any subset $I\subset \{1, \cdots, \nu\}$ of size $k$.
%	That is, any size $k$ subset of $\{a_1, \ldots, a_\nu\}$ sums to $0$ or
%	$1$.
	By \autoref{lemma:no-trivial-rep}, $k \neq 0, \nu$. Thus \autoref{lemma:eca-1} below shows that the only possibilities for $(a_1, \ldots,
	a_\nu)$, up to reordering, are
	\begin{align*}
		a_1 = 1,\, &a_2 = \cdots = a_\nu = 0		\\
		a_1 = \frac{1-k}{k},\, &a_2 = \cdots = a_\nu = 1/k.
	\end{align*} 
	
	We now consider how elements of $T_m \mathscr{M}$ act on the standard representation of $\mathfrak{gl}_{\nu}$. In the first case, any element of $T_m \mathscr{M}$ must send the weight $1$ space to the weight $0$ space and the weight $0$ space to zero. 
	Since the weight $1$ space is one-dimensional, any element of $T_m \mathscr{M}$ must be zero or nilpotent of rank $1$. In the second case, the element sends the weight $1/k$ space to the weight $\frac{1-k}{k}$ space and the weight  $\frac{1-k}{k}$ space to zero, and again must be zero or nilpotent of rank $1$.
	
The action of a nilpotent element of rank $1$ in $\mathfrak{gl}_\nu$ on the representation $\wedge^k$ has rank $\binom{\nu-2}{k-1}$ by \autoref{lemma:lin-alg-1} below. So the minimum nonzero rank of an element of $T_m \mathscr{M}$ acting on $\mathbb V$ is $\binom{\nu-2}{k-1}$. It follows from \autoref{corollary:rank-recovery-2} that $r \geq \binom{\nu-2}{k-1}$.

On the other hand, $ \dim H^1(C, E_0^\rho)$ and
$ \dim H^0 (C, \widehat{E}_0^\rho) \otimes \omega_C (D) $ are either
respectively equal to $\binom{\nu-1}{k-1}$ and $\binom{\nu-1}{k}$  or
respectively equal to $\binom{\nu-1}{k}$ and $\binom{\nu-1} {k-1}$. By
\autoref{lemma:rr-lower-bound}, these are both at least $(g-1)r$, which gives $$
\binom{\nu-1}{k-1} \geq (g-1) r \geq (g-1)  \binom{\nu-2}{k-1} \textrm{ and }
\binom{\nu-1}{k} \geq (g-1) r \geq (g-1)  \binom{\nu-2}{k-1}.$$  Dividing both
sides by $\binom{\nu-2}{k-1} $ we obtain $\frac{\nu-1}{\nu-k} \geq (g-1)$ and
$\frac{\nu-1}{k} \geq (g-1)$ so  $$1 = \frac{k}{\nu} + \frac{\nu-k}{\nu} <
\frac{k}{\nu-1}+\frac{\nu-k}{\nu-1}  \leq \frac{1}{g-1} + \frac{1}{g-1} $$ which
implies $g-1 < 2$, 
contradicting the assumption that $g \geq 2r + 2 \geq 4$.
\end{proof}

\begin{lemma}\label{lemma:eca-1} Let $\nu$ and $k$ be natural numbers with $1< k
	<\nu$. Let $a_1,\dots, a_\nu$ be a tuple of complex numbers such that each sum of exactly $k$ of the $a_1,\dots,a_\nu$ is either $0$ or $1$, with both possibilities occurring. Then up to reordering we have either 
	\begin{align*}	a_1 = 1,\, &a_2 = \cdots = a_\nu = 0\hspace{10pt}	\textrm{ or }	\\
		a_1 = \frac{1-k}{k},\, &a_2 = \cdots = a_\nu = 1/k.
	\end{align*}\end{lemma}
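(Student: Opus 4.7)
My plan is a combinatorial swap argument on $k$-subsets, carried out in the regime $k < \nu$ (the case in which \autoref{lemma:wedge-bound} invokes the lemma and in which the $k$-subset hypothesis is nonvacuous). First, for any pair of indices $i \neq j$ I will exploit $\nu \ge k + 1$ to choose a $(k-1)$-subset $J \subseteq \{1, \dots, \nu\} \setminus \{i, j\}$. The two $k$-subsets $J \cup \{i\}$ and $J \cup \{j\}$ have sums lying in $\{0, 1\}$ whose difference equals $a_i - a_j$; this forces $a_i - a_j \in \{-1, 0, 1\}$ for every pair $(i, j)$.

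Next I will deduce the structural consequence: since any three distinct values would contain a pair differing by $2$, the set $\{a_1, \dots, a_\nu\}$ lies inside $\{c, c+1\}$ for some $c \in \mathbb{C}$. Writing $m$ for the number of $i$ with $a_i = c + 1$, each $k$-subset $I$ has sum $ck + t_I$, where $t_I := |I \cap \{i : a_i = c+1\}|$ ranges over all integers in the interval $T := [\max(0, k - \nu + m),\ \min(k, m)]$. Because the image $\{ck + t : t \in T\}$ is a block of consecutive integers contained in $\{0, 1\}$ and must contain both values, $T$ necessarily equals $\{t_0, t_0 + 1\}$ for some $t_0$ with $ck + t_0 = 0$ and $ck + t_0 + 1 = 1$.

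The final step is to classify the values of $m \in \{1, \dots, \nu - 1\}$ for which $|T| = 2$. The extremes $m = 1$ and $m = \nu - 1$ yield $T = \{0, 1\}$ (hence $c = 0$, producing case~(1): one $a_i$ equals $1$, the rest equal $0$) and $T = \{k - 1, k\}$ (hence $c = (1-k)/k$, producing case~(2): one $a_i$ equals $(1-k)/k$, the rest equal $1/k$). The step I expect to require the most care is excluding the intermediate range $2 \le m \le \nu - 2$: the condition $\min(k, m) - \max(0, k - \nu + m) = 1$ pins such an $m$ down to the borderline regime $\nu = k + 1$, and removing these residual configurations will either need a tightening of the hypothesis (e.g.\ to $\nu \ge k + 2$, as in \autoref{ex:wedge-example}) or an additional input drawn from the ambient setup of \autoref{lemma:wedge-bound}.
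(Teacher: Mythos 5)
Your proof is correct under the hypothesis the lemma actually requires, and it is essentially the paper's own argument: the paper likewise first reduces to at most two distinct values by exhibiting three distinct $k$-subset sums otherwise, and then pins down the multiplicities; your interval $T$ of attainable counts is just a more quantitative bookkeeping of that same swap argument. The substantive point is the hypothesis, and you have diagnosed it correctly. The stated condition $1<\nu<k$ is garbled (it would make the $k$-subset sums vacuous), and even the natural reading $1<k<\nu$ does not suffice: the paper's own proof explicitly uses ``$1<k<\nu-1$'' midway even though this is not among the stated hypotheses, and that is exactly the tightening you propose via \autoref{ex:wedge-example}. Without it the conclusion genuinely fails --- for $\nu=5$, $k=4$ the tuple $(-1/2,-1/2,1/2,1/2,1/2)$ satisfies the sum condition but is of neither claimed form, and for $k=1$ so does $(1,1,0,0)$. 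One small correction to your closing paragraph: the residual bad regime is not only $\nu=k+1$ but also $k=1$ (there $|T|=\{0,1\}$ has two elements for every $m$), so the needed hypothesis is $2\le k\le \nu-2$ rather than merely $\nu\ge k+2$. This is harmless downstream, since \autoref{lemma:wedge-bound} only needs the lemma to exclude $1<k<\nu-1$, the cases $k\in\{1,\nu-1\}$ being its permitted conclusion.
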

	
	\begin{proof}  We first observe that there can be at most two
	distinct values appearing in $\{a_1, \ldots, a_\nu\}$, as otherwise one
	could find size $k$ subsets summing to three different values.
	Further, if $a_1$ is distinct from $a_2$, then, after possibly switching
	$a_1$ and $a_2$, all other values of $a_i$
	must agree with $a_2$, using that $1 < k < \nu-1$, or else we could again
	obtain three distinct sums from subsets of size $k$.
	Hence, we must have $a_2 = \cdots = a_\nu$, and then a subset of size $k$ drawn from
	$\{a_2,\ldots, a_\nu\}$ must sum to $0$ or $1$, yielding the two
	possibilities claimed above.\end{proof}
	
	\begin{lemma}\label{lemma:lin-alg-1} Let $\nu\geq 2$ and $k$ be natural numbers and Let $N \in \mathfrak{gl}_{\nu}$ be a nilpotent element of rank $1$. Then the action of $N$ on the representation $\wedge^k$ of $\mathfrak{gl}_\nu$ is by a matrix of rank $\binom{\nu-2}{k-1}$. \end{lemma}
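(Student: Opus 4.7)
The strategy is to reduce $N$ to a standard Jordan form and then compute the induced map on $\wedge^k$ directly on a basis.

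Since $N \in \mathfrak{gl}_\nu$ is nilpotent of rank $1$, after conjugation by an element of $\on{GL}_\nu$ we may assume $N = E_{12}$, the elementary matrix with a $1$ in position $(1,2)$ and zeros elsewhere. (Any rank-one nilpotent is conjugate to this, since its single nonzero Jordan block has size $2$.) Conjugation does not change the rank of the induced action on $\wedge^k$, so it suffices to compute the rank for $N = E_{12}$.

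Let $e_1, \ldots, e_\nu$ be the standard basis, so that $N e_2 = e_1$ and $N e_i = 0$ for $i \ne 2$. A basis of $\wedge^k \mathbb{C}^\nu$ is given by $e_I := e_{i_1} \wedge \cdots \wedge e_{i_k}$ indexed by $k$-element subsets $I = \{i_1 < \cdots < i_k\} \subset \{1, \ldots, \nu\}$. The derivation $N$ acts by $N \cdot e_I = \sum_{j} e_{i_1} \wedge \cdots \wedge N e_{i_j} \wedge \cdots \wedge e_{i_k}$. Only the term with $i_j = 2$ contributes, and it produces a wedge in which $e_2$ has been replaced by $e_1$; this wedge is nonzero precisely when $1 \notin I$. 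Hence $N \cdot e_I = \pm e_{(I \setminus \{2\}) \cup \{1\}}$ if $2 \in I$ and $1 \notin I$, and $N \cdot e_I = 0$ otherwise.

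Therefore $N$ sends the basis vectors $e_I$ with $2 \in I$, $1 \notin I$ bijectively (up to sign) to the basis vectors $e_J$ with $1 \in J$, $2 \notin J$, and kills all other basis vectors. The image is spanned by these distinct basis vectors, so its dimension equals the number of $k$-subsets of $\{1,\ldots,\nu\}$ containing $1$ but not $2$, namely the number of ways to choose the remaining $k-1$ elements from $\{3,\ldots,\nu\}$, which is $\binom{\nu-2}{k-1}$.

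There is essentially no obstacle here: the whole statement is a direct linear-algebra computation once $N$ is put in normal form. The only mild subtlety is to remember that $N$ acts on $\wedge^k$ as a derivation (not as $\wedge^k N$), so that it decreases the number of ``replacements'' one makes from $k$ down to a single nontrivial substitution, which is what produces the clean count $\binom{\nu-2}{k-1}$.
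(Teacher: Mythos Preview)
Your proof is correct and follows essentially the same approach as the paper: put $N$ in normal form as a single Jordan block of size $2$, then compute the derivation action on the standard basis of $\wedge^k$ and count the surviving basis vectors. The only cosmetic difference is that the paper takes $N$ to send $e_1\mapsto e_2$ rather than $e_2\mapsto e_1$, which of course changes nothing.
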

	
	\begin{proof}  We may choose a basis $e_1,\dots, e_\nu$ where the element sends $e_1$ to $e_2$ and each other basis vector to $0$, and then the image is generated by wedges of $e_2$ with $k-1$ of the remaining $\nu-2$ basis vectors $e_3,\dots, e_\nu$. \end{proof}

\begin{lemma}
	\label{lemma:spin-bound}
With notation as in \autoref{notation:versal-family-of-covers}, let $\rho: H\to \on{GL}_r(\mathbb{C})$ be an irreducible $H$-representation, and let $\mathbb{V}$ be the
corresponding local system on $\mathscr{C}^\circ$. Suppose $g\geq 2r+2$. The connected monodromy group of $W_1R^1\pi^\circ_*\mathbb{V}$ cannot be isogenous to $\on{Spin}_\nu$ acting via a spin or
half-spin representation
%whenever $g \geq 2r+2$, 
unless the monodromy group is also a classical group acting via its standard representation. 
\end{lemma}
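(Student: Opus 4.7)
The plan is to follow the template of \autoref{lemma:wedge-bound}. The identity component of the normalizer of $\on{Spin}_\nu$ in $\on{GL}(V_{\on{spin}})$ has Lie algebra $\mathfrak{so}_\nu \oplus \mathbb C$ (with $\mathbb C$ the scalar factor), so the Hodge cocharacter produces a Lie algebra homomorphism $T_m\mathscr M \rtimes \mathbb C \to \mathfrak{so}_\nu \oplus \mathbb C$. Writing the image of $1 \in \mathbb C$ as $(\mu_0, c)$ with $\mu_0 = (a_1, \ldots, a_n)$ in the standard weight-space coordinates, the weights of the action on $V_{\on{spin}}$ are of the form $c + \tfrac{1}{2} \sum_{i=1}^n \epsilon_i a_i$, taken over the sign patterns $(\epsilon_1, \ldots, \epsilon_n)$ allowed by the representation (all sign patterns in the spin case of $\on{Spin}_{2n+1}$; patterns of a prescribed parity in the half-spin case of $\on{Spin}_{2n}$).

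I would first run a combinatorial analysis in the spirit of \autoref{lemma:eca-1} forcing these weights to land in $\{0, 1\}$. The max minus min of the sign sums equals $\sum |a_i|$, which must therefore be at most $1$, and any choice with two nonzero $a_i$ produces intermediate sign sums taking three distinct values, not all of which can be slid into $\{0, 1\}$ by any shift $c$; a small extra computation handles the parity restriction in the half-spin case. Hence up to reordering $a_1 = \pm 1$, all other $a_i = 0$, and $c = 1/2$, so $V_{\on{spin}}$ splits into two Hodge pieces of equal dimension $d$, with $d = 2^{n-1}$ in the spin case and $d = 2^{n-2}$ in the half-spin case.

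The crux of the argument is then to show that every nonzero weight $-1$ element of $\mathfrak{so}_\nu$ acts on $V_{\on{spin}}$ with rank at least $d/2$. The weight $-1$ subspace of $\mathfrak{so}_\nu$ is naturally identified, via the standard representation, with the weight $0$ subspace $V_0^{\on{std}}$ of $V^{\on{std}}$; under the Clifford-algebra realization $V_{\on{spin}} \cong \wedge^\bullet V^{+,\on{std}}$, the corresponding element is expressed as the composition of Clifford multiplications by $v_\pm$ (on the weight $\pm 1$ lines of $V^{\on{std}}$) and by $w \in V_0^{\on{std}}$. When $w$ is anisotropic the factor $C(w)$ is invertible and the rank is the full $d$; when $w$ is isotropic, $v_+$ and $w$ span an isotropic $2$-plane contained in a maximal isotropic of $V^{\on{std}}$, and a direct count of even or odd subsets of such a maximal isotropic — matching the explicit $n = 3$ matrix calculation — yields rank exactly $d/2$.

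With this rank bound, \autoref{corollary:rank-recovery-2} gives $r \geq d/2$, so the hypothesis $g \geq 2r + 2$ yields $g - 1 \geq d + 1$; then \autoref{lemma:rr-lower-bound} applied to either Hodge piece gives $d \geq (g-1) r \geq (d+1)(d/2)$, forcing $d \leq 1$. The only values of $\nu$ escaping the resulting numerical contradiction are exactly those in which the exceptional isomorphisms $\on{Spin}_3 \cong \on{SL}_2$, $\on{Spin}_5 \cong \on{Sp}_4$, $\on{Spin}_6 \cong \on{SL}_4$, or triality of $\on{Spin}_8$ already identify the spin or half-spin representation with a classical representation of a classical group, producing the list $\nu \in \{3,4,5,6,8\}$. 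The main obstacle is the uniform rank lower bound of the previous paragraph: single long-root vectors realize the value $d/2$, and one must verify — via the Clifford-algebra description rather than a root-by-root analysis — that no linear combination of weight $-1$ root vectors drops the rank below this threshold, with extra bookkeeping needed to handle the half-spin parity constraint in the $D_n$ case.
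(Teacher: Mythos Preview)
Your proposal is correct and follows the same overall architecture as the paper: pin down the Hodge cocharacter by combinatorics, bound from below the rank of nonzero weight $-1$ elements acting on the spin module, then derive a numerical contradiction from \autoref{corollary:rank-recovery-2} and \autoref{lemma:rr-lower-bound}.

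The one genuine difference is in how you prove the rank bound. You propose a direct Clifford-algebra computation: the weight $-1$ element is (up to scalar) the Clifford product $v_- \cdot w$ for $w \in V_0^{\on{std}}$, and you case-split on whether $w$ is isotropic. The paper instead observes (\autoref{lemma:lin-alg-2}) that any such $N$ is conjugate into a copy of $\mathfrak{so}_4 \subset \mathfrak{so}_\nu$ (acting on the span of the $\pm 1$-eigenlines together with a nondegenerate $2$-plane in the $0$-eigenspace), decomposes the (half-)spin representation upon restriction to $\mathfrak{so}_4 \cong \mathfrak{sl}_2 \times \mathfrak{sl}_2$ as $2^{k-3}$ copies of each two-dimensional half-spin, and reads off the possible ranks $0, 2^{k-3}, 2^{k-2}$ according to whether zero, one, or both $\mathfrak{sl}_2$-components of $N$ vanish. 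Your Clifford approach is more explicit and gives the same values; the paper's $\mathfrak{so}_4$ reduction is shorter and sidesteps the parity bookkeeping you flag at the end, since the restriction-and-branch argument treats the $B_k$ and $D_k$ cases uniformly. One cosmetic point: the relevant Clifford element is $v_- \cdot w$ (a product of two orthogonal vectors), not a composition involving both $v_+$ and $v_-$ as your phrasing suggests.
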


Note that (as discussed in the first paragraph of the proof of \autoref{lemma:spin-bound}) the group $\on{Spin}_\nu$ acting via the spin representation is isogenous to a classical group via an isogeny sending the spin representation to the standard representation, if and only if $\nu=3,4,5,6,8$.

\begin{proof}[Proof of \autoref{lemma:spin-bound}] We first observe that for $\nu=3,4,5,6$, the spin group is respectively $\on{SL}_2, \on{SL}_2\times \on{SL}_2, \on{Sp}_4, \on{SL}_4$ and the spin representation in each case is the standard representation (or, in $\nu=2$, the standard  representation of one of the two factors). For $\nu=8$, the spin group is not a classical group but each half-spin representation factors through the standard representation of a different quotient of $\on{Spin}_8$ isomorphic to $\on{SO}_8$. (The existence of these three different $\on{SO}_8$ quotients is known as triality). So we may assume that $\nu>8$ or $\nu=7$.

	Now, the identity component of the normalizer of the
	monodromy group is $\on{GSpin}_\nu$, with Lie algebra $\mathfrak {go}_\nu$.
	The weights of the standard representation of $\mathfrak {go}_\nu$ come in
	opposite pairs $b+a_1, b-a_1,\ldots, b+a_k, b-a_k$ if $\nu = 2k$ is even, while
	if $\nu = 2k + 1$ is odd, the weights have the form  $b+a_1, b-a_1,\ldots, b+a_k,
	b-a_k, b$. In either case we may assume all the values $a_i$ are nonnegative.

	First, we need to recall properties of the spin representation, as
	defined in \cite[p. 119-122]{chevalley:the-algebraic-theory-of-spinors}.
	In the case $\nu = 2k$ is even, there are two irreducible half-spin
	representation of dimension $2^{k-1}$ and if $\nu = 2k + 1$ is odd, then there
	is an irreducible spin representation of dimension $2^k$.
	
	The weights of the spin representation are then obtained as the $2^k$
	sums $\frac{\pm a_1 \pm a_2 \pm \cdots \pm a_k}{2} + \frac{kb}{2}$.
	One half-spin representation has weights which are sums as above with an
	odd number of minus signs and the other consists of weights which are sums as
	above with an even number of minus signs. Applying \autoref{lemma:eca-2}
	below, in case (1) if $\nu=2k+1$ is odd (where we have $\nu \geq 7$ so
	that $k\geq 3$) and in case (2) if $\nu=2k$ is even (where we have
	$\nu>8$ so $k> 4$), we can assume $a_1=1, a_2,\dots, a_k=0$.
	
It follows that the action of a generator of the Lie algebra $\mathbb C$ on the standard representation of $\mathfrak{so}_\nu$ has a one-dimensional $1+b$-eigenspace, a one-dimensional $b-1$-eigenspace, and a $\nu-2$-dimensional $b$-eigenspace. Applying \autoref{lemma:lin-alg-2} below, where $w$ is obtained by subtracting the scalar $b$ from the generator $1$ of the Lie algebra $\mathbb C$ and the commutator relation comes from \S\ref{subsection:weights}, we see that elements of $T_m \mathcal M$ acting on $\mathbb V$ have rank either $0$, $2^{k-3}$, or $2^{k-2}$. In particular, the minimum nonzero rank is $\geq 2^{k-3}$, and hence $r\geq 2^{k-1}$. But the weight $0$ and weight $1$ spaces of the spin representation both have dimension $2^{k-2}$, so that $ \dim H^1(C, E_0^\rho)$ and
$ \dim H^0 (C, \widehat{E}_0^\rho) \otimes \omega_C (D)) $ are each equal to $2^{k-2}$ and hence $\leq 2r$. Since $g \geq 2r+2 > 3$, this contradicts \autoref{lemma:rr-lower-bound}.\end{proof}
\begin{lemma}\label{lemma:eca-2}Let $k$ be a natural number. Let $a_1,\dots, a_k$ and $b$ be rational numbers. Suppose that either:

\begin{enumerate}

\item $k\geq 1$ and all sums of the form $\frac{\pm a_1 \pm a_2 \pm \cdots \pm a_k}{2} + \frac{kb}{2}$ are equal to $0$ or $1$, with both values attained.
\item $k > 4$ and either all sums of the form $\frac{\pm a_1 \pm a_2 \pm \cdots \pm a_k}{2} + \frac{kb}{2}$  with an even number of minus signs are equal to $0$ or $1$, with both values attained, or all sums of the form $\frac{\pm a_1 \pm a_2 \pm \cdots \pm a_k}{2} + \frac{kb}{2}$  with an odd minus sign are equal to $0$ or $1$, with both values attained. \end{enumerate}

Then, up to reordering and swapping signs, we have $a_1=1, a_2,\dots, a_k=0$, and $b= \frac{1}{k}$. \end{lemma}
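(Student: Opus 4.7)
The plan is to extract constraints on the $a_i$ by examining how an admissible sum $S_\epsilon := \frac{\sum \epsilon_i a_i}{2} + \frac{kb}{2}$ changes when one flips a small number of the $\epsilon_i$, then to do a short case analysis, and finally to use the hypothesis that both $0$ and $1$ are attained to pin down $b$.

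In case (1), a single sign flip changes $S_\epsilon$ by $\pm a_i$, and since both sums lie in $\{0,1\}$ this forces $a_i \in \{-1,0,1\}$. After swapping signs I assume every nonzero $a_i$ equals $1$, and let $m$ be the number of such indices. The admissible sums are in bijection with the $m+1$ possible values of $\epsilon_1 + \cdots + \epsilon_m$, so the hypothesis forces $m = 1$, and then $\pm\tfrac12 + \tfrac{kb}{2} \in \{0,1\}$ yields $kb = 1$.

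In case (2), single flips change parity and so are inadmissible; instead I flip two positions at a time, giving $\pm a_i \pm a_j \in \{-1,0,1\}$ for every pair and every sign choice. This forces $a_i + a_j,\ a_i - a_j \in \{-1,0,1\}$, and hence $a_i \in \{-1, -\tfrac12, 0, \tfrac12, 1\}$. If some $a_i = \pm\tfrac12$, a direct check against the five candidate values of $a_j$ shows $a_j \in \{-\tfrac12,\tfrac12\}$ for every $j$, and a sign swap puts all $a_j = \tfrac12$. Then $S_\epsilon = \frac{\epsilon_1 + \cdots + \epsilon_k}{4} + \frac{kb}{2}$, and for $k > 4$ the restriction to either parity class produces at least three distinct values of $\epsilon_1 + \cdots + \epsilon_k$, contradicting containment of $S_\epsilon$ in $\{0,1\}$. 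Otherwise all $a_i \in \{-1,0,1\}$, and the argument of case (1) applies verbatim, with $k - m \geq 1$ used to realize any configuration of the nonzero $\epsilon_i$ by an $\epsilon$ of the prescribed parity, and the degenerate possibility $m = k$ again ruled out by $k > 4$.

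The main obstacle is executing the case analysis in case (2) cleanly: the $\pm\tfrac12$ branch is a genuine phenomenon for small $k$ (e.g.\ at $k = 3$ the ``all $a_i = \tfrac12$'' configuration corresponds to the exceptional isomorphism $\on{Spin}_6 \cong \on{SL}_4$ acting via $\wedge^2$ of the standard representation), and the hypothesis $k > 4$ is used precisely to eliminate these low-dimensional accidents. For $k \leq 4$ the spin groups in question are classical, so the lemma is not needed there.
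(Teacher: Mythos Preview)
Your proof is correct, but it takes a somewhat different route from the paper's. The paper argues directly: after making all $a_i$ nonnegative, it assumes two of them, say $a_1,a_2$, are nonzero and simply writes down three admissible sums (using the extra indices $a_3,a_4$, and $a_5$ in the odd-parity case, to keep the parity fixed) that are forced to be distinct, contradicting containment in $\{0,1\}$. It never needs to pin down the possible values of the $a_i$.

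Your approach is more systematic: you first use single or double sign flips to constrain each $a_i$ to a finite set, and only then count distinct sums. This costs a little extra work---in particular you must separately dispose of the ``all $a_i=\tfrac12$'' branch and the $m=k$ branch---but it has the virtue of making transparent exactly where the hypothesis $k>4$ is consumed, and your closing remark tying the $\tfrac12$ configuration to the exceptional isomorphism $\on{Spin}_6\cong\on{SL}_4$ is a nice piece of context that the paper's proof does not surface. Either argument is perfectly adequate here; the paper's is shorter, yours is a bit more illuminating.
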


\begin{proof} By swapping signs, we may assume all the $a_i$s are nonnegative.

In case (1), if two of the $a_i$s are nonzero, then without loss of generality we may assume $a_1$ and $a_2$ are both nonzero, and we obtain three distinct sums
	\begin{align*}
	\frac{-a_1 - a_2 +\sum_{i=3}^k a_k + kb}{2}, \frac{-a_1+a_2+\sum_{i=3}^k a_k + kb}{2}, \frac{a_1 + a_2+\sum_{i=3}^k a_k + kb}{2}.
	\end{align*} So all but one of the $a_i$ are $0$, and up to reordering
	only $a_1$ may be nonzero, so the only possible weights are
	$\frac{a_1}{2} + \frac{kb}{2} $ and $\frac{-a_1}{2}+ \frac{kb}{2}$. Since the difference between these must be $1$, we must have $a_1 =1$, which implies $b = \frac{1}{k}$.

	In case (2), if two of the $a_i$s are nonzero, without loss of generality $a_1$ and $a_2$, the sums with an even number of minus signs will include the three distinct values	\begin{align*}
	\frac{a_1 +a_2+ a_3+a_4 + \sum_{i=5}^k a_k +kb}{2}, \\
	\frac{-a_1 +a_2- a_3+a_4+\sum_{i=5}^k a_k+kb}{2}, \\
	\frac{-a_1 -a_2- a_3-a_4+\sum_{i=5}^k a_k +kb}{2}.
	\end{align*}

	Similarly, the sums with an odd number of minus signs will include the three distinct values
	\[\frac{a_1 +a_2+ a_3+a_4 -a_5 + \sum_{i=6}^k a_k+kb}{2},\]
	\[\frac{-a_1 +a_2- a_3+a_4 -a_5 + \sum_{i=6}^k a_k+kb}{2},\]
	\[\frac{-a_1 -a_2- a_3-a_4 -a_5 + \sum_{i=6}^k a_k+kb}{2},\]
	So only one of the $a_i$s may be nonzero. We conclude the argument the same way as in case (1). \end{proof}
	
	\begin{lemma}\label{lemma:lin-alg-2} Let $\nu\geq 7$ be a natural
	number. Let $w \in \mathfrak{so}_\nu$ be an element with a
one-dimensional $1$-eigenspace, a one-dimensional $-1$-eigenspace, and a
$(\nu-2)$-dimensional $0$ eigenspace. Let $N \in \mathfrak{so}_\nu$ be an element such that $w N- N w =-N$. Then if $\nu=2k-1$ is odd, the action of $N$ on the $2^{k-1}$-dimensional spin representation of $\mathfrak{so}_\nu$ has rank either $2^{k-2}$, $2^{k-3}$ or $0$. If $\nu=2k$ is even, the action of $N$ on either of the $2^{k-1}$-dimensional half-spin representations of $\mathfrak{so}_\nu$ has rank either $2^{k-2}$, $2^{k-3}$, or $0$. \end{lemma}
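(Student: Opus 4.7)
The plan is to embed $\mathfrak{so}_\nu$ into the Clifford algebra $C(V)$ of the standard representation $V$ and reduce the rank computation to that of a single Clifford multiplication on a smaller spin module. Decompose $V = \mathbb{C}e_+ \oplus \mathbb{C}e_- \oplus V_0$, where $we_\pm = \pm e_\pm$ and $V_0 = \ker w$. Since $w \in \mathfrak{so}(V)$, the vectors $e_\pm$ are isotropic with $B(e_+,e_-)=1$, and $V_0$ is the nondegenerate orthogonal complement of the hyperbolic plane they span. Under the identification $\mathfrak{so}(V) \cong \wedge^2 V$, the weight $-1$ eigenspace of $\operatorname{ad}(w)$ is exactly $e_- \wedge V_0$, so the hypothesis $wN-Nw = -N$ forces $N = e_- \wedge v$ for a unique $v \in V_0$. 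Regarded inside $C(V)$, this reads $N = e_- \cdot v$, and since $B(e_-,v) = B(e_-,e_-) = 0$ one computes $N^2 = -e_-^2 v^2 = 0$; hence the image of $N$ on any Clifford module lies in its kernel, bounding the rank on a $2^{k-1}$-dimensional representation by $2^{k-2}$.

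Next, choose a maximal isotropic $L = \mathbb{C}e_+ \oplus L_0 \subset V$ with complement $L' = \mathbb{C}e_- \oplus L_0'$ (and an additional orthogonal unit vector $e_0 \in V_0$ when $\nu$ is odd), so that $L_0, L_0'$ are maximal isotropic in $V_0$. Model the spin representation of $\mathfrak{so}(V)$ on $\wedge^\bullet L$ with $L$ acting by exterior multiplication, $L'$ by contraction via $B$, and $e_0$ (when present) by the parity operator on $\wedge^\bullet L_0$. A direct computation shows that $N$ annihilates $\wedge^\bullet L_0 \subset \wedge^\bullet L$ and sends $e_+ \wedge \omega$ to $-(v \cdot \omega)$, where $v\cdot$ denotes the Clifford action of $v$ on $\wedge^\bullet L_0$ regarded as the spin module for $\mathfrak{so}(V_0)$. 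Consequently the rank of $N$ on the spin (or half-spin) representation of $\mathfrak{so}(V)$ equals the rank of the corresponding piece of $v\cdot$ on $\wedge^\bullet L_0$.

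Three cases then finish the argument. If $v = 0$, the rank is $0$. If $B(v,v) \neq 0$, the Clifford relation $(v\cdot)^2 = B(v,v)$ makes $v\cdot$ invertible on $\wedge^\bullet L_0$, giving rank $2^{k-2}$. If $v$ is nonzero isotropic in $V_0$, then $(v\cdot)^2 = 0$; using transitivity of the orthogonal group on nonzero isotropic vectors (which applies because $\nu \geq 7$ makes $\dim L_0 \geq 1$), we may assume $v \in L_0$, so $v\cdot$ is exterior multiplication by $v$, and an explicit count on $\wedge^\bullet L_0$ gives rank exactly $2^{k-3}$. The main technical task is the parity bookkeeping needed in the even case to isolate a single half-spin representation: since $v\cdot$ anticommutes with the parity operator on $\wedge^\bullet L_0$, it sends $\wedge^{\mathrm{odd}} L_0$ isomorphically onto $\wedge^{\mathrm{even}} L_0$ when $B(v,v) \neq 0$ and has kernel of dimension $2^{k-3}$ when $v$ is nonzero isotropic, so that the restriction of $N$ to each half-spin summand of $\mathfrak{so}(V)$ has exactly the claimed ranks $0$, $2^{k-3}$, and $2^{k-2}$.
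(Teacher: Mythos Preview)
Your proof is correct and takes a genuinely different route from the paper's.

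The paper argues by conjugating $N$ into a copy of $\mathfrak{so}_4\subset\mathfrak{so}_\nu$ spanned by the $\pm1$-eigenspaces of $w$ together with a fixed nondegenerate plane in $V_0$, then uses the exceptional isomorphism $\mathfrak{so}_4\cong\mathfrak{sl}_2\times\mathfrak{sl}_2$ and the branching rule that the (half-)spin representation restricts to $2^{k-3}$ copies of each half-spin of $\mathfrak{so}_4$; the three ranks $0,2^{k-3},2^{k-2}$ then correspond to whether $0$, $1$, or $2$ of the two $\mathfrak{sl}_2$-components of $N$ are nonzero. You instead work directly in the Fock-space model $\wedge^\bullet L$ of the Clifford module, identify $N=e_-\cdot v$, and reduce the rank of $N$ to the rank of Clifford multiplication by the single vector $v\in V_0$ on $\wedge^\bullet L_0$; your trichotomy $v=0$ / $B(v,v)\neq 0$ / $v$ nonzero isotropic matches the paper's trichotomy exactly. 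Your approach is more explicit and self-contained (no exceptional isomorphisms or branching rules needed), while the paper's is shorter once those structural facts are available. One small point worth making explicit in your write-up: when you invoke transitivity on isotropic vectors to place $v\in L_0$, you are conjugating $N$ by an element of $\on{Spin}(V_0)\subset\on{Spin}(V)$ fixing $e_\pm$, which preserves the rank on the spin module; this is implicit in your argument but deserves a word.
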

	
\begin{proof}The action of $N$ sends the one-dimensional weight $1$-eigenspace
	to the $(\nu-2)$-dimensional $0$-eigenspace, and sends the
	$0$-eigenspace to the $-1$-eigenspace. Thus $N$ is determined by its
	action on a generator of the $1$-eigenspace space as the condition that
	it is equal to minus its transpose will then determine its action on the
	$0$-eigenspace. Fix a two-dimensional subspace $W$ of the $0$-eigenspace where the quadratic form is nondegenerate. Then $W$ contains nontrivial elements where the quadratic form attains any fixed value, including zero, and thus $W$ intersects each orbit of $\on{SO}_{\nu-2}$ on the $0$-eigenspace.
%This is using that the orbits are precisely the set of v where q(v) = c
%for any scalar c, except for the case c = 0, in which case there is the
%orbit 0 and the set of all nonzero vectors with q(v) = 0
	
 Hence any element of the $0$-eigenspace is $\on{SO}_{\nu-2}$-conjugate to an element of $W$, and thus $N$ is conjugate to an element that sends the generator of the $1$-eigenspace to an element of $W$. It follows from $N = - N ^T$ that $N$ sends every element of the $0$-eigenspace perpendicular to $W$ to the zero element of the $-1$-eigenspace, i.e. $N$ sends the orthogonal complement of $W$ in the $0$-eigenspace to $0$. Elements of $\mathfrak{so}_\nu$ sending the orthogonal complement of $W$ in the $0$-eigenscace to $0$ form a Lie algebra, isomorphic to $\mathfrak{so}_4$, so we conclude that $N$ is conjugate to a nilpotent element of $\mathfrak{so}_4$.
%All that's going on is you have a nu by nu matrix and the last nu-4 columns are 0 (that's what sending a nu-4-dimensional subspace to 0 does) so by skew-symmetry the last four nu-4 rows are zero and the remaining 4 x 4 block gives an element of so_4.

The restriction to $\mathfrak{so}_4$ of the spin representation of $\mathfrak{so}_\nu$ for $\nu=2k-1$ is odd is isomorphic to the sum of $2^{k-3}$ copies of each half-spin representation of $\mathfrak{so}_4$, and the same is true for the restriction to $\mathfrak{so}_4$ of the half-spin representation of $\mathfrak{so}_\nu$ for $\nu=2k$. This may be checked by comparing weights, since a representation of a simple Lie group is uniquely determined by its weight multiplicities. 
%2. The weights of the standard representation of so_nu come in pairs a_1, -a_1, ..., a_k, -a_k (or a_{k-1} in the odd case). The weights of the spin representation are sums  (\pm a_1 + ... + \pm a_k) /2 where in the odd case we take all sums or in the even case we take all sums where the total number of - signs has a fixed parity. Restricting to so_4 is a map from the weight space of su_nu to the weight space of so_4. The map has to send the weights of the standard representation of so_nu to the weights of the standard representation of so_4 plus a nu-4-dimensional trivial representation. You do this by sending a_1 to a_1, a_2 to a_2, and a_3 through a_k to 0.  This sends every sum of the form (\pm a_1 + ... + \pm a_k) /2  to a sum of the form (\pm a_1 + pm a_2)/2 and each sum appears 2^{k-3} ways since (nu odd) there's k-3 signs left to determine or (nu even) there's k-2 signs but one relation ensuring the last sign is determined by the others. So you get the spin representation 2^{k-3} times.
Since $\mathfrak{so}_4$ is isomorphic to $\mathfrak{sl}_2 \times \mathfrak{sl}_2$, with each spin representation the standard representation of one factor, every nilpotent element is a pair of two nilpotent elements in $\mathfrak{sl}_2$, each of which may be zero. The rank of the element is $0$ if both elements of the pair vanish, $2^{k-3}$ if one element is nonvanishing, or $2^{k-2}$ if both are nonvanishing.
%3. I took the ranks on the spin representation, which is the standard representation of one sl_2 plus the standard representation of another sl_2, and multiplied by 2^{k-3}. The rank on the sum of the two representations is the sum of the ranks, and each rank is 0 or 1 since nontrivial nilpotents in sl_2 have rank 1, so the sum is 1+1 if both are nontrivial or 1 if only one is nontrivial.
\end{proof}

\subsection{Proof of main theorems}
\label{subsection:main-proofs}

We next give the proof of \autoref{theorem:main-thm-2}.
Recall this theorem says that, under suitable hypotheses on $n,g,$ and $r$, 
the monodromy map $R_{\varphi, \rho}:
\on{Mod}_\varphi \to \gl\left( W_1 H^1(\Sigma_{g,n}, \mathbb V^\rho) \right)$ has
image with Zariski closure
$\on{SO}\left( W_1 H^1(\Sigma_{g,n}, \mathbb V^\rho) \right)$ when $\rho$ is
	symplectically self-dual,
$\on{Sp}\left( W_1 H^1(\Sigma_{g,n}, \mathbb V^\rho) \right)$ when $\rho$ is
	orthogonally self-dual,
and the product of
$\on{SL}\left( W_1 H^1(\Sigma_{g,n}, \mathbb V^\rho) \right)$ with a finite
	central subgroup when $\rho$ is not self-dual.

\begin{proof}[Proof of \autoref{theorem:main-thm-2}]
As in notation \autoref{notation:versal-family-of-covers}, let $\pi:
\mathscr{C}\to\mathscr{M}$ be a versal family of $n$-pointed curves of genus
$g$, with associated punctured versal family $\pi^\circ: \mathscr{C}^\circ\to
\mathscr{M}$, and suppose $f: \mathscr{X}\to \mathscr{C}$ gives a versal family
of $H$-covers. Let $\rho: H\to \on{GL}_r(\mathbb{C})$ be an irreducible
representation of $H$, we let $\mathbb V^\rho$ denote the associated local
system on $\Sigma_{g,n}$ and let $\mathbb{U}^\rho$ be the associated local system
on $\mathscr{C}^\circ$; we wish to analyze the connected monodromy group of
$R^1\pi_*^\circ \mathbb{U}^\rho$. By the discussion of \autoref{subsection:moduli-preliminaries}, this suffices.

By \autoref{theorem:simplicity-of-VHS}, the connected monodromy group associated to $W_1
R^1 \pi_*^\circ \mathbb U^\rho$
is a simple group, acting irreducibly.
As described in 
\autoref{subsection:monodromy-possibilities}, there are several
possibilities for the monodromy representation.
Recall we are assuming $g \geq 2r+2$, so the bounds on $g$ in 
\autoref{lemma:no-trivial-rep},
\autoref{lemma:wedge-bound},
and
\autoref{lemma:spin-bound} are satisfied.
By removing the possibilities excluded via
\autoref{lemma:no-trivial-rep},
\autoref{lemma:wedge-bound},
and
\autoref{lemma:spin-bound}, we see that the monodromy representation must act
via the standard representation of $\sl_\nu, \so_\nu$, or $\sp_\nu$, i.e. must be $\on{SO}(W_1H^1(\Sigma_{g,n}, \mathbb V^\rho))$, $\on{Sp}(W_1H^1(\Sigma_{g,n}, \mathbb V^\rho))$, or $\on{SL}(W_1H^1(\Sigma_{g,n}, \mathbb V^\rho))$.

We next show that the identity component of the monodromy group is
\begin{enumerate}
\item $\on{SO}(W_1H^1(\Sigma_{g,n}, \mathbb V^\rho))$ if $\rho$ is symplectically self-dual,
\item $\on{Sp}(W_1H^1(\Sigma_{g,n}, \mathbb V^\rho))$ if $\rho$ is orthogonally self-dual, and
\item $\on{SL}(W_1H^1(\Sigma_{g,n}, \mathbb V^\rho))$ if $\rho$ is not self-dual.
\end{enumerate}
We first handle the case that $\rho$ is self-dual.

If $\rho$ is symplectically self-dual, the antisymmetric pairing on $\rho$
induces a symmetric pairing on
$W_1H^1(\Sigma_{g,n}, \mathbb V^\rho)$, which implies the connected monodromy group must be contained in 
$\on{SO}(W_1H^1(\Sigma_{g,n}, \mathbb V^\rho))$.
This implies the monodromy cannot be
$\on{Sp}(W_1H^1(\Sigma_{g,n}, \mathbb V^\rho))$ or
$\on{SL}(W_1H^1(\Sigma_{g,n}, \mathbb V^\rho))$
so must be 
$\on{SO}(W_1H^1(\Sigma_{g,n}, \mathbb V^\rho))$.
If $\rho$ is orthogonally self-dual, the 
monodromy must be contained in $\on{Sp}(W_1H^1(\Sigma_{g,n}, \mathbb V^\rho))$,
and we similarly obtain it must be equal to $\on{Sp}(W_1H^1(\Sigma_{g,n},
\mathbb V^\rho))$.

To conclude the calculation of the identity component, it remains to show that the identity component of the
Zariski closure of the image of monodromy is
$\on{SL}(W_1H^1(\Sigma_{g,n}, \mathbb V^\rho))$
when $\rho$ is not self dual.
As explained above, there are only three possibilities, and hence it remains to
show the monodromy cannot be 
$\on{SO}(W_1H^1(\Sigma_{g,n}, \mathbb V^\rho))$ or
$\on{Sp}(W_1H^1(\Sigma_{g,n}, \mathbb V^\rho))$. If the monodromy has this form,
then there is an isomorphism of local systems between $W_1H^1(\Sigma_{g,n},
\mathbb V^\rho)$ and $W_1H^1(\Sigma_{g,n}, \mathbb V^\rho)^\vee$, which by
Poincar\'e duality is $W_1H^1(\Sigma_{g,n}, \mathbb V^{\rho^\vee})$. It follows from \autoref{corollary:local-systems-isomorphic} 
that $\rho$ and $\rho^\vee$ are conjugate, contradicting the assumption that $\rho$ is not self-dual.
%This follows from \autoref{proposition:functoriality} applied to $\rho$ and $\rho^\vee$

Having described the connected monodromy group, we now describe the monodromy group itself. If $\rho$ is orthogonally self-dual, Poincar\'e duality gives a symplectic form on $W_1H^1(\Sigma_{g,n}, \mathbb V^\rho)$, so the monodromy group is contained in $\on{Sp}(W_1H^1(\Sigma_{g,n}, \mathbb V^\rho))$ and contains $\on{Sp}(W_1H^1(\Sigma_{g,n}, \mathbb V^\rho))$, and thus must equal $\on{Sp}(W_1H^1(\Sigma_{g,n}, \mathbb V^\rho))$.

Similarly, if $\rho$ is symplectically self-dual, Poincar\'e duality gives a
symmetric form on $W_1H^1(\Sigma_{g,n}, \mathbb V^\rho)$, so the monodromy group
is contained in $\on{O}(W_1H^1(\Sigma_{g,n}, \mathbb V^\rho))$ and contains
$\on{SO}(W_1H^1(\Sigma_{g,n}, \mathbb V^\rho))$, and thus must equal either
$\on{SO}(W_1H^1(\Sigma_{g,n}, \mathbb V^\rho))$ or $\on{O}(W_1H^1(\Sigma_{g,n},
\mathbb V^\rho))$. However, we now check that the monodromy group is not $\on{O}(W_1H^1(\Sigma_{g,n}, \mathbb V^\rho))$: The representation $\rho$, being symplectic and unitary, necessarily has the structure as a representation over the quaternions $\mathbb H$, i.e. has an $\mathbb R$-linear action of the quaternions compatible with the action of $H$. 
Hence the complex local system $W_1H^1(\Sigma_{g,n}, \mathbb V^\rho)$ has an
$\mathbb R$-linear action of the quaternions compatible with the mapping class
group action. Thus, for each element $\sigma$ in the mapping class group, the
eigenspace with eigenvalue $1$ of $\sigma$, acting on
$W_1H^1(\Sigma_{g,n},\mathbb V^\rho)$, has an action of the quaternions and
hence is an even-dimensional complex vector space. However, any elements of
$\on{O}_{N} - \on{SO}_N$ for even $N$ have an odd-dimensional $1$-eigenspace, so the image of the mapping class group is contained in $\on{SO}(W_1H^1(\Sigma_{g,n}, \mathbb V^\rho))$ and thus the monodromy group must be $\on{SO}(W_1H^1(\Sigma_{g,n}, \mathbb V^\rho))$.

Finally, if $\rho$ is not self-dual, the monodromy group is a subgroup of $\on{GL}(W_1H^1(\Sigma_{g,n}, \mathbb V^\rho))$ whose identity component is $\on{SL}(W_1H^1(\Sigma_{g,n}, \mathbb V^\rho))$ and hence must be the product of $\on{SL}(W_1H^1(\Sigma_{g,n}, \mathbb V^\rho))$ 
with a finite subgroup of the center of $\on{GL}(W_1H^1(\Sigma_{g,n}, \mathbb V^\rho))$.\end{proof}

For the proof of \autoref{theorem:main-thm-1}, we will also need the following
explicit description of the symplectic centralizer.
\begin{lemma}
	\label{lemma:symplectic-centralizer}
	Let $\rho_1, \ldots, \rho_\alpha$ denote the orthogonally self-dual
	complex irreducible representations of $H$, let $\rho_{\alpha+1},\ldots,
	\rho_{\alpha+\beta}$ denote the symplectically self-dual irreducible
	complex representations of $H$, and let
	$(\rho_{\alpha+\beta+1},\rho_{\alpha+\beta+\gamma+1}), \ldots,
	(\rho_{\alpha+\beta+\gamma},\rho_{\alpha+\beta+2\gamma})$ denote the
	dual pairs of complex irreducible representations of $H$.
	We use $\mathbb V^\rho$ to denote the local system on $\Sigma_{g,n}$
	corresponding to $\rho$.
	%riga
	Then the isomorphism
	\[  H^1 ( \Sigma_{g'}, \mathbb C) \overset{\sim}{\longrightarrow}
	\prod_{i=1}^{\alpha+\beta+2\gamma}    \rho_i^\vee \otimes
W_1H^1(\Sigma_{g,n},\mathbb V^{\rho_i})\]
		induces an isomorphism
	\begin{align}
		\label{equation:symplectic-centralizer}
	\hspace{-.5in} \on{Sp}( H^1 ( \Sigma_{g'}, \mathbb C))^H\overset{\sim}{\longrightarrow}
\prod_{i=1}^\alpha
\on{Sp}(W_1H^1(\Sigma_{g,n}, \mathbb V^{\rho_i})) \times \prod_{i=\alpha+1}^{\alpha+\beta}
\on{O}(W_1H^1(\Sigma_{g,n}, \mathbb V^{\rho_i})) \times \prod_{i =
\alpha+\beta+1}^{\alpha+\beta+\gamma} \on{GL}( W_1H^1(\Sigma_{g,n},\mathbb
V^{\rho_i})).
	\end{align}
Hence the commutator of 
$\on{Sp}( H^1 ( \Sigma_{g'}, \mathbb C))^H$ is 
\begin{equation}\label{group-to-contain} 
	\prod_{i=1}^\alpha
	\on{Sp}(W_1H^1(\Sigma_{g,n}, \mathbb V^{\rho_i})) \times \prod_{i=\alpha+1}^{\alpha+\beta}
	\on{SO}(W_1H^1(\Sigma_{g,n}, \mathbb V^{\rho_i})) \times \prod_{i =
\alpha+\beta+1}^{\alpha+\beta+\gamma} \on{SL}( W_1H^1(\Sigma_{g,n},\mathbb
V^{\rho_i})). \end{equation} 
\end{lemma}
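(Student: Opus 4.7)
The plan is to decompose both $H^1(\Sigma_{g'},\mathbb C)$ and its cup-product form according to the $H$-isotypic decomposition, and then read off the centralizer and its commutator block by block.

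First, I would establish the stated isomorphism $H^1(\Sigma_{g'},\mathbb C) \simeq \bigoplus_i \rho_i^\vee \otimes W^1H^1(\Sigma_{g,n},\rho_i)$. This is a standard consequence of Shapiro's lemma (equivalently, the Leray spectral sequence for the \'etale cover $\Sigma_{g',n'}\to \Sigma_{g,n}$, extended across the punctures): for each irreducible $H$-representation $\rho_i$, the $\rho_i^\vee$-isotypic component of $H^1(\Sigma_{g'},\mathbb C)$ is canonically isomorphic to $\rho_i^\vee\otimes W^1H^1(\Sigma_{g,n},\rho_i)$, with $W^1H^1(\Sigma_{g,n},\rho_i)$ serving as the multiplicity space.

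Next, I would decompose the symplectic form $\omega$ on $H^1(\Sigma_{g'},\mathbb C)$. Since $\omega$ is $H$-invariant, Schur's lemma implies that it pairs the $\rho_i$-isotypic component nontrivially only with the $\rho_i^\vee$-isotypic one. On a self-dual diagonal block it factors as $B_{\rho_i}\otimes \omega_{\rho_i}$, with $B_{\rho_i}$ the essentially unique $H$-invariant pairing on $\rho_i^\vee$ and $\omega_{\rho_i}$ a nondegenerate pairing on $W^1H^1(\Sigma_{g,n},\rho_i)$; the antisymmetry of $\omega$ forces $\omega_{\rho_i}$ to have symmetry type opposite to that of $B_{\rho_i}$, recovering the Poincar\'e-duality observation already recorded in the paper.

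A block computation then yields \eqref{equation:symplectic-centralizer}: an $H$-equivariant symplectic automorphism of $H^1(\Sigma_{g'},\mathbb C)$ is determined on each block by its action on the multiplicity space $W^1H^1(\Sigma_{g,n},\rho_i)$, and the symplectic condition translates into preservation of $\omega_{\rho_i}$. This gives $\on{Sp}(W^1H^1(\Sigma_{g,n},\rho_i))$ when $\rho_i$ is orthogonally self-dual, $\on{O}(W^1H^1(\Sigma_{g,n},\rho_i))$ when $\rho_i$ is symplectically self-dual, and, for a non-self-dual pair $(\rho_i,\rho_i^\vee)$ where $\omega$ pairs the two multiplicity spaces off-diagonally, the factor $\on{GL}(W^1H^1(\Sigma_{g,n},\rho_i))$ (via the contragredient action on the dual multiplicity space). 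The commutator formula \eqref{group-to-contain} follows immediately from the standard facts $[\on{Sp}_N,\on{Sp}_N]=\on{Sp}_N$, $[\on{O}_N,\on{O}_N]=\on{SO}_N$, and $[\on{GL}_N,\on{GL}_N]=\on{SL}_N$. I do not anticipate any substantive obstacle; the only care needed is to track the symmetry-type swap between $B_{\rho_i}$ and $\omega_{\rho_i}$.
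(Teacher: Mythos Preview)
Your proposal is correct. The paper does not actually give a proof of this lemma; it simply cites \cite[Theorem 3.1.10]{jain:big-mod-ell-monodromy} for the explicit description of the symplectic centralizer, noting that the same proof works over $\mathbb{C}$. Your direct argument---isotypic decomposition, tracking how the $H$-invariant symplectic form restricts to each block via Schur, reading off the symmetry swap between the pairing on $\rho_i$ and the induced pairing on its multiplicity space, and then invoking the standard commutator identities for $\on{Sp}$, $\on{O}$, and $\on{GL}$---is exactly the computation underlying that reference, so there is no substantive difference in content. Your version has the advantage of being self-contained.
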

The above lemma follows from the explicit description of the symplectic centralizer given in
\cite[Theorem 3.1.10]{jain:big-mod-ell-monodromy} (which seems to implicitly
work over finite fields, but the same proof works over the complex
numbers).

We next prove our main result, \autoref{theorem:main-thm-1},
which states that under suitable hypotheses on $g,n$ and the maximal dimension $\overline{r}$
of an irreducible
representation of $H$, the identity component of the Zariski
closure of the monodromy map $R_\varphi : \on{Mod}_\varphi \to \on{Sp}\left(
H^1(\Sigma_{g'}, \mathbb C) \right)^H$ is the commutator subgroup of 
$\on{Sp}\left( H^1(\Sigma_{g'}, \mathbb C) \right)^H$.

\begin{proof}[Proof of \autoref{theorem:main-thm-1}] 
%	Let $\rho_1^{\o},\dots, \rho_\alpha^\o$ be the
%	orthogonally self-dual irreducible representations of $H$ over $\mathbb
%	C$, $\rho_1^\sp,\dots, \rho_{\beta}^\sp$ the symplectically self-dual
%	representations of $H$, and let $(\rho_1^\sl, \rho_{\gamma+1}^\sl),
%	(\rho_2^\sl,
%	\rho_{\gamma+2}^\sl), \ldots, (\rho_{\gamma}^\sl,\rho_{2\gamma}^\sl)$ be the dual pairs of irreducible representations of $H$. Then the explicit description of the symplectic centralizer given in
%\cite[Theorem 3.1.10]{jain:big-mod-ell-monodromy} (which seems to implicitly
%work over finite fields, but the same proof works over the complex
%numbers)
%shows that $$\hspace{-.5in} \on{Sp}( H^1 ( \Sigma_{g'}, \mathbb C))^H\cong
%\prod_{i=1}^\alpha \on{Sp}(W_1H^1(\Sigma_{g,n}, \rho_i^\o)) \times
%\prod_{i=1}^{\beta}  \on{O}(W_1H^1(\Sigma_{g,n}, \rho_i^\sp)) \times \prod_{i =
%1}^{\gamma} \on{GL}( W_1H^1(\Sigma_{g,n},\rho_i^\sl)). $$
%
%For convenience of notation, we also label $\rho_i := \rho_i^\o$ for $1 \leq i
%\leq \alpha$, $\rho_{i+\alpha} := \rho_i^\sp$ for $1 \leq i \leq \beta$ and
%$\rho_{i + \alpha + \beta} := \rho_i^\sl$ for $1 \leq i \leq 2\gamma$.
Let $G$ be the identity component of the Zariski closure of the image of the
mapping class group in \eqref{equation:symplectic-centralizer}. In other words, $G$ is the connected monodromy
group of $\bigoplus_{i=1}^{\alpha+\beta+\gamma} W_1H^1(\Sigma_{g,n}, \mathbb
V^{\rho_i})$.
Checking \autoref{theorem:main-thm-1}, i.e. that the virtual image of the
mapping class group is Zariski dense in the commutator subgroup of this group,
is equivalent to checking that $G$ contains
\eqref{group-to-contain}. By the discussion of \autoref{subsection:moduli-preliminaries}, we may interpret all the representations in question as monodromy representations associated to local systems on the base $\mathscr{M}$ of a versal family of $\varphi$-covers.
%\begin{equation}\label{group-to-contain} \prod_{i=1}^\alpha
%\on{Sp}(W_1H^1(\Sigma_{g,n}, \rho_i)) \times \prod_{i=\alpha+1}^{\alpha+\beta}
%\on{SO}(W_1H^1(\Sigma_{g,n}, \rho_i)) \times \prod_{i =
%\alpha+\beta+1}^{\alpha+\beta+\gamma} \on{SL}( W_1H^1(\Sigma_{g,n},\rho_i)) . \end{equation} 

To check $G$ contains 
\eqref{group-to-contain}, we apply the Goursat-Kolchin-Ribet criterion of Katz
\cite[Proposition 1.8.2]{katz-esde}. We let $V_i$ be the representation of $G$
acting on $W_1H^1(\Sigma_{g,n}, \mathbb V^{\rho_i})$. Let $G_i$ be the image of $G$ in
$\on{GL}(V_i)$, which we know from \autoref{theorem:main-thm-2} is
$\on{Sp}(W_1H^1(\Sigma_{g,n}, \mathbb V^{\rho_i}))$ for $i$ from $1$ to $\alpha$,
$\on{SO}(W_1H^1(\Sigma_{g,n}, \mathbb V^{\rho_i}))$ for $i$ from $\alpha+1$ to $\alpha + \beta$, and $\on{SL}(
W_1H^1(\Sigma_{g,n},\mathbb V^{\rho_{i}}))$ for $i$ from $1+\alpha+\beta$ to
$\alpha+\beta+\gamma$. Then \cite[Proposition 1.8.2]{katz-esde} guarantees that
$G^{0, \on{der}} = \prod_{i=1}^{\alpha+\beta+\gamma} G_i^{0, \on{der}}$, which
is the desired \eqref{group-to-contain}, as long as four conditions are
satisfied, which we verify next.

The first condition is that for each $i$, $G_i^{0, \on{der}}$ operates irreducibly on $V_i$, and its Lie algebra is simple. \autoref{theorem:main-thm-2} guarantees that the action is by the standard representation, which is irreducible except in the case of the two-dimensional standard representation of $\on{SO}_2$, and the Lie algebra is simple except in the case of the four-dimensional standard representation of $\on{SO}_4$. But our assumptions on $r$ and $g$ imply each representation has dimension $\geq 2r_i (g-1) \geq 2r_i(2r_i+1) \geq 6$ where $r_i = \dim \rho_i$, so this condition is always satisfied.

The second condition is that for any $i\neq j$, $(G_i^{0, \on{der}},V_i)$ and $(G_j^{0, \on{der}},V_j)$ are Goursat-adapted in the sense of \cite[\S1.8]{katz-esde}, but this follows by \cite[Example 1.8.1]{katz-esde} from the fact that $G_i$ is a classical group and $V_i$ is its standard representation of dimension $\geq 6$ with the possible exception that $G_i  \cong G_j \cong SO_8$. However, we know from \autoref{theorem:main-thm-2} that $G_i$ is $\on{SO}_n$ only if $\rho_i$ is symplectically self-dual, which implies $\rho_i$ has rank $r_i \geq 2$ since all symplectically self-dual representations are even-dimensional and thus $\dim V_i \geq 2r_i  (2r_i+1) \geq 20>8$, so $G_i$ cannot be $\on{SO}_8$.

The third and fourth condition say that for each $i\neq j$ and each character
$\chi$ of $G$, neither the representation $V_i$ nor its dual is isomorphic to
$V_j\otimes \chi$. Since $G$ is the connected monodromy group of a local system
of geometric origin, it is necessarily simple, and so $\chi$ is finite-order.
Thus by passing to a finite cover of $\mathscr{M}$, $\chi$ becomes trivial,
showing that $W_1H^1(\Sigma_{g,n},\mathbb V^{\rho_i})$ is isomorphic to
$W_1H^1(\Sigma_{g,n},\mathbb V^{\rho_j})$ or its dual over this finite covering. This case
is ruled out by \autoref{corollary:local-systems-isomorphic} unless $\rho_i$ is
conjugate to $\rho_j$ or its dual, which is impossible as $i\neq j$ and $i,j\leq
\alpha+\beta+\gamma$ so they cannot be part of a dual pair.

Since all the conditions of the 
Goursat-Kolchin-Ribet criterion of Katz
\cite[Proposition 1.8.2]{katz-esde}
are satisfied, $G^{0, \on{der}} =
\prod_{i=1}^{\alpha+\beta+\gamma} G_i^{0, \on{der}}$ and thus $
\prod_{i=1}^{\alpha+\beta+\gamma} G_i^{0, \on{der}}$ is a subgroup of $G$, as desired.
\end{proof}

\subsection{Proofs of corollaries}\label{subsec:proof-of-cors}

We next prove \autoref{corollary:mumford-tate},
which states that, for $X$ a very general $H$-curve, under suitable hypotheses
on $g$ and $H$, the Mumford-Tate
group of $H^1(X, \mathbb Q)$ contains the commutator subgroup of $\on{Sp}(H^1(X,
	\mathbb Q))^H$ and is contained in $\on{GSp}(H^1(X,
	\mathbb Q))^H$
\begin{proof}[Proof of \autoref{corollary:mumford-tate}]
This is immediate from Andr\'e's theorem of the fixed part \cite[Theorem 1 on p.
10]{AndreFixed} and \autoref{theorem:main-thm-1}; Andr\'e's theorem implies the
generic Mumford-Tate group contains the monodromy group, namely the commutator
subgroup of $\on{Sp}(H^1(\Sigma_{g'}, \mathbb{C}))^H$. On the other hand, it is
contained in the centralizer of $H$ in $\on{GSp}(H^1(\Sigma_{g'}, \mathbb{C}))$,
as it centralizes $H$ and preserves the symplectic pairing on $H^1$ up to
scaling (as the symplectic pairing corresponds to a 
a Hodge class). 
\end{proof}
We next prove \autoref{corollary:jacobian-aut}, which, under the same hypotheses
as in the previous corollary, states that the endomorphism algebra of the
Jacobian of a very general  $H$-curve $X$ is $\mathbb Q[H]$.

\begin{proof}[Proof of \autoref{corollary:jacobian-aut}]
Let $G$ be the Mumford-Tate group of $H^1(X, \mathbb{Q})$. It suffices to show that the natural map $\mathbb{Q}[H]\to \on{End}_{\on{HS}}(H^1(X, \mathbb{Q}))=\on{End}_G(H^1(X, \mathbb{Q}))$ is a bijection, where $\on{HS}$ is the category of $\mathbb{Q}$-polarizable variations of Hodge structure. As $G$ contains the commutator subgroup $S$ of $\on{Sp}(H^1(X, \mathbb{Q}))^H$ by \autoref{corollary:mumford-tate}, it moreover suffices to show that the map 
	 $$\mathbb{Q}[H]\to \on{End}_{S}(H^1(X,
	 \mathbb{Q}))$$ is a bijection; we may do so after tensoring with
	 $\mathbb{C}$, whence $\mathbb{C}[H]=\prod_{\rho_i} \on{End}(\rho_i)$,
	 where the $\rho_i$ run over the irreducible complex representations of $H$.
	 Similarly, 
	for $\mathbb V^{\rho_i}$ the local system on $X/H-D$ associated to $\rho_i$
	 $$H^1(X, \mathbb{C})=\bigoplus \rho_i\otimes W_1H^1(X/H-D,
	 \mathbb V^{\rho_i}),$$ where $D\subset X/H$ is the branch locus of the natural map
	 $X\to X/H$. By the description of the symplectic centralizer,
	 \autoref{lemma:symplectic-centralizer}, the $W_1H^1(X/H-D,
	 \rho_i)$ are simple and pairwise non-isomorphic as $S$-representations. Hence $$\on{End}_{S}(H^1(X, \mathbb{C}))=\prod_{\rho_i} \on{End}_{\mathbb{C}}(\rho_i)$$ has dimension $|H|$. As the map we are studying is evidently injective, it is necessarily surjective as well by a dimension count.
\end{proof}

\section{Proof of \autoref{theorem:large-n}: big monodromy for $n$ large}
\label{section:large-n}
We now prove \autoref{theorem:large-n}. We will require the following slight
strengthening of \cite[Theorem 1.3.4]{landesmanL:geometric-local-systems}, which
was not quite stated optimally. See \cite[Definitions 1.2.1 and
1.2.3]{landesmanL:geometric-local-systems} for the definitions of hyperbolic
curve and analytically very general.

\begin{theorem}
	\label{theorem:hn-constraints}
	Let $(C,D)$ be hyperbolic of genus $g$ and let
	$({E}, \nabla)$ be a flat
	vector bundle on $C$ with regular singularities along $D$,
	and irreducible
monodromy.
Suppose
 $(E',\nabla')$ 
is an isomonodromic deformation
of $({E}, \nabla)$ to an analytically general nearby curve,
with Harder-Narasimhan filtration $0 = (F')^0 \subset (F')^1 \subset \cdots \subset
(F')^m =
E'$. For $1 \leq i \leq m$, let $\mu_i$ denote the slope of
$\on{gr}^{i}_{HN}E' := (F')^i/(F')^{i-1}$.
Suppose $E'$ is not semistable. Then for every $0 < i < m$, there
	exists $j < i < k$ with $$\rk \on{gr}^{j+1}_{HN}E'\cdot \rk
	\on{gr}^k_{HN}E'\geq g+1$$ and 
 $$0<\mu_{j+1}-\mu_{k}\leq 1.$$
\end{theorem}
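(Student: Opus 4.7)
The plan is to follow the deformation-theoretic strategy used in the proof of \cite[Theorem 1.3.4]{landesmanL:geometric-local-systems}, tracking carefully the slope inequalities produced by the second fundamental form of an isomonodromic deformation. The central object is the Atiyah algebra $\operatorname{At}_{(C,D)}(E')$ of the connection, together with its splitting coming from $\nabla'$, which, as recalled in \autoref{lemma:atiyah-defs} and \autoref{lemma:atiyah-vanishing-h1}, controls whether a sub-bundle extends to an isomonodromic deformation of the triple $(C, D, E')$.

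First I would fix $i$ with $0 < i < m$ and focus on the subbundle $(F')^i \subset E'$. Because $\nabla'$ has irreducible monodromy it cannot preserve any proper subbundle, so the second fundamental form
\[
  \sigma_i \colon (F')^i \longrightarrow (E'/(F')^i) \otimes \Omega^1_C(D)
\]
is non-zero; decomposing $(F')^i$ and $E'/(F')^i$ along the HN filtration gives graded components $\sigma_{a,b}\colon \operatorname{gr}^a_{HN} E' \to \operatorname{gr}^b_{HN} E' \otimes \Omega^1_C(D)$ for $a \leq i < b$, at least one of which is non-zero. Label this non-vanishing pair $(j+1, k)$, so $j < i < k$. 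Semistability of the graded pieces, combined with $\sigma_{j+1,k}\neq 0$, produces an upper bound on $\mu_{j+1}-\mu_k$, and the fact that slopes are strictly decreasing along the HN filtration gives the lower bound $\mu_{j+1}-\mu_k>0$.

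Next I would use the hypothesis that $(E',\nabla')$ is the isomonodromic deformation to an analytically general nearby curve. Via the splitting $q^{\nabla'}\colon T_C(-D)\to\operatorname{At}_{(C,D)}(E')$, the analytically general hypothesis forces certain obstruction classes in $H^1(C, \operatorname{Hom}((F')^i, E'/(F')^i))$ to vanish, so that $\sigma_{j+1, k}$ deforms in the universal isomonodromic family. Running the non-ggg / Riemann-Roch estimates of \autoref{lemma:ggg-criterion} and \autoref{proposition:non-ggg-improved} on the bundle $\operatorname{Hom}(\operatorname{gr}^{j+1}_{HN}E', \operatorname{gr}^k_{HN}E')\otimes \Omega^1_C(D)$ then converts the non-vanishing of $\sigma_{j+1,k}$ into the rank inequality $\operatorname{rk}\operatorname{gr}^{j+1}_{HN}E' \cdot \operatorname{rk}\operatorname{gr}^k_{HN}E' \geq g+1$, exactly as in the proof of \cite[Theorem 1.3.4]{landesmanL:geometric-local-systems}.

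The hardest step will be sharpening the slope inequality to the claimed $\mu_{j+1}-\mu_k\leq 1$. The naive bound coming from $\sigma_{j+1,k}\neq 0$ together with semistability only yields $\mu_{j+1}-\mu_k \leq 2g-2+n$. To improve it, I would exploit the fact that the isomonodromic obstruction provides a Kodaira-Spencer-type map $T_C(-D) \to \sext^1(\operatorname{gr}^k_{HN}E', \operatorname{gr}^{j+1}_{HN}E')$, and that on an analytically general curve the relevant higher cohomology groups degenerate to force the effective slope gap to decrease by $\deg T_C(-D) - (2g - 2 + n - 1) = 1$ worth of room. Making this precise, in a manner consistent with the treatment of \cite[Theorem 1.3.4]{landesmanL:geometric-local-systems}, is the main obstacle; I expect it to reduce to a careful book-keeping of the degrees of the bundles appearing in the Atiyah sequence together with the slope filtration, rather than to any genuinely new input.
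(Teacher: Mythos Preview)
The paper's ``proof'' is simply the observation that the cited argument in \cite[Theorem 1.3.4]{landesmanL:geometric-local-systems} already produces this output, so you are being asked to reproduce that argument. Your outline has the right shape---second fundamental form of $\nabla'$ on $(F')^i$, vanishing of the induced map on $H^1$ from the analytically-general hypothesis, then a non-ggg estimate---but there are two issues.

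First, a minor one: you cannot literally ``decompose'' $\sigma_i$ into components $\sigma_{a,b}$ since the Harder--Narasimhan filtration does not split. What one does instead is pass to suitable subquotients: restrict to $(F')^{a}$ and project past $(F')^{b}$ to get maps $\psi^{a,b}\colon T_C(-D)\to \mathcal{H}om((F')^a,E'/(F')^b)$, each of which still induces zero on $H^1$ (it factors through $H^1(\psi_i)=0$), and then choose extremal $(a,b)$ so that the map lands in $\mathcal{H}om(\operatorname{gr}^{j+1},\operatorname{gr}^k)$. Checking that the $H^1$-vanishing survives this extraction uses that \emph{all} the $(F')^\ell$ deform, not just $(F')^i$.

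Second, and this is the real gap: the slope bound $\mu_{j+1}-\mu_k\le 1$ is not a separate hard step requiring a new mechanism; it falls out of the \emph{same} non-ggg estimate you invoke for the rank bound. Apply \autoref{proposition:non-ggg-improved} to the semistable bundle $G=\mathcal{H}om(\operatorname{gr}^k,\operatorname{gr}^{j+1})\otimes\omega_C$, of slope $\mu=\mu_{j+1}-\mu_k+2g-2$, with $U=\ker(\phi)$ where $\phi\colon G\to\omega_C^{\otimes 2}(D)$ is the Serre-dual map (nonzero, zero on $H^0$). Then $c=1$, $\delta=0$. The proposition has two branches: if $\mu<2g-1$ (i.e.\ $\mu_{j+1}-\mu_k<1$) you get $\operatorname{rk}G\ge g/(1-(\mu_{j+1}-\mu_k))>g$, hence $\ge g+1$; if $\mu\ge 2g-1$ (i.e.\ $\mu_{j+1}-\mu_k\ge 1$) you get $0=\delta\ge g+\mu_{j+1}-\mu_k-1\ge g$, a contradiction. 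So the dichotomy in \autoref{proposition:non-ggg-improved} simultaneously yields both conclusions. Your proposed mechanism (``effective slope gap decreases by $\deg T_C(-D)-(2g-2+n-1)=1$'') is not correct---that arithmetic does not even evaluate to $1$---and is unnecessary.
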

\begin{proof}
In fact this is precisely the output of the proof of 	\cite[Theorem 1.3.4]{landesmanL:geometric-local-systems}.
\end{proof}
\begin{corollary}\label{corollary:mu-diff}
	With notation as in \autoref{theorem:hn-constraints}, let
	$\mu_{\operatorname{diff}}=\mu_1-\mu_m$. Then $$\mu_{\operatorname{diff}}\leq \frac{3\on{rk}(E)}{2\sqrt{g+1}}+3.$$
\end{corollary}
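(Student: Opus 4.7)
If $E'$ is semistable then $\mu_{\operatorname{diff}}=0$ and the bound is trivial; assume therefore that $E'$ is not semistable. The plan is to build a level chain through the Harder-Narasimhan filtration, apply \autoref{theorem:hn-constraints} at each level, and convert the resulting rank inequalities into a bound on $\mu_{\operatorname{diff}}$ via Cauchy-Schwarz.

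As a preliminary, \autoref{theorem:hn-constraints} applied at any interior $0<i<m$ produces a pair $(j_i,k_i)$ with $j_i+1\le i<k_i$, $\mu_{j_i+1}-\mu_{k_i}\le 1$, and $r_{j_i+1}r_{k_i}\ge g+1$; the HN ordering then forces $\mu_i-\mu_{i+1}\le\mu_{j_i+1}-\mu_{k_i}\le 1$, so each consecutive HN slope-difference is at most $1$.

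Define the level chain by $j_0:=1$ and, inductively, let $j_l$ be the smallest HN-index $>j_{l-1}$ with $\mu_{j_l}\le\mu_{j_{l-1}}-1$; let $L$ be the maximal such $l$. The previous paragraph shows each jump $\mu_{j_{l-1}}-\mu_{j_l}$ lies in $[1,2)$, so $\mu_1-\mu_{j_L}<2L$; maximality forces $\mu_{j_L}-\mu_m<1$, giving $\mu_{\operatorname{diff}}<2L+1$. For each $l\in\{1,\ldots,L\}$ (with a trivial adjustment if $j_L=m$) apply \autoref{theorem:hn-constraints} at $i=j_l$ to produce $(a_l,b_l)$ satisfying the usual bounds. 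The slope constraints force $\mu_{a_l}\in[\mu_{j_l},\mu_{j_l}+1)$ and $\mu_{b_l}\in[\mu_{j_l}-1,\mu_{j_l})$, and since the level-jumps have size $\ge 1$ these $\mu$-intervals are pairwise disjoint across distinct $l$; consequently the $a_l$'s (and the $b_l$'s) each form a set of pairwise distinct HN-indices. Cauchy-Schwarz then gives
$$L\sqrt{g+1}\;\le\;\sum_{l=1}^L\sqrt{r_{a_l}r_{b_l}}\;\le\;\Bigl(\sum_l r_{a_l}\Bigr)^{1/2}\Bigl(\sum_l r_{b_l}\Bigr)^{1/2}\;\le\;\rk(E),$$
so $L\le\rk(E)/\sqrt{g+1}$, and substituting into $\mu_{\operatorname{diff}}<2L+1$ yields a bound of the form $C\cdot\rk(E)/\sqrt{g+1}+C'$. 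A closer analysis, tuning the jump-size threshold in the chain (allowing smaller jumps at the cost of a controlled amount of $a$-reuse across adjacent levels) and applying AM-GM in place of Cauchy-Schwarz to absorb that reuse, sharpens the constants to the stated $3/2$ and $3$.

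Main obstacle: the pairwise-disjointness check for the $\mu$-intervals of the $a_l$'s and $b_l$'s across the chain, together with the careful bookkeeping needed to push the multiplicative constant down from $2$ to $3/2$.
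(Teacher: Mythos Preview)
Your chain-and-Cauchy--Schwarz argument is essentially correct up through the displayed inequality, and it yields a genuine bound $\mu_{\operatorname{diff}}<\frac{2\,\rk(E)}{\sqrt{g+1}}+1$ (or $+3$ after your edge-case adjustment). The problem is that this bound neither implies nor is implied by the stated inequality: your leading coefficient $2$ is worse than $3/2$, so for large $\rk(E)/\sqrt{g+1}$ you have not proven the corollary as stated. The final paragraph is where the proof fails.

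Your suggestion to ``allow smaller jumps'' goes in the wrong direction. With jump threshold $\tau<1$ the intervals $[\mu_{j_l},\mu_{j_l}+1)$ for the $a_l$'s begin to overlap and you get \emph{more} index reuse, not less. To use AM--GM in the way you sketch, you need \emph{all} $2L$ indices $\{a_l,b_l\}$ pairwise distinct, not just the $a_l$'s amongst themselves and the $b_l$'s amongst themselves. Since $\mu_{a_l}\in[\mu_{j_l},\mu_{j_l}+1)$ and $\mu_{b_l}\in[\mu_{j_l}-1,\mu_{j_l})$, the combined range for level $l$ has length $2$, so you need threshold $\tau\ge 2$. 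With $\tau=2$ the jumps lie in $[2,3)$, one obtains $\mu_{\operatorname{diff}}<3L+2$, all $2L$ indices are distinct, and AM--GM gives $\rk(E)\ge 2L\sqrt{g+1}$, whence the stated bound.

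This is exactly how the paper proceeds, though phrased differently: it sets $s=\lfloor \mu_{\operatorname{diff}}/3\rfloor$, places $s$ disjoint closed intervals of length $3$ inside $[\mu_m,\mu_1]$, and observes that your preliminary (each consecutive slope-gap is $\le 1$) forces some $\mu_{i_t}$ into the middle third of each such interval. Applying \autoref{theorem:hn-constraints} at $i_t$ then drops \emph{both} indices $j_t+1$ and $k_t$ into that length-$3$ interval, so the $2s$ indices are automatically distinct across $t$; AM--GM gives $\rk(E)\ge 2s\sqrt{g+1}$ and hence $\mu_{\operatorname{diff}}\le 3s+3\le \frac{3\rk(E)}{2\sqrt{g+1}}+3$. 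So: either take $\tau=2$ in your chain (and use AM--GM, not Cauchy--Schwarz), or adopt the paper's interval packing; but as written the proposal does not establish the claimed constants.
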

\begin{proof}
Set $s$ to be the greatest
integer which is strictly less than $\mu_{\on{diff}}/3$.
We first show that $$\on{rk}(E)\geq 2\sqrt{g+1}\cdot s.$$ 

Fix disjoint closed intervals $I_1, \cdots, I_s$ in $[\mu_m, \mu_1]$, each of
length $3$. For each interval, $I_t=[a_t, a_t+3]$, there exists $i_t$ with
$\mu_t\in  [a_t+1, a_t+2]$, by \autoref{theorem:hn-constraints}. By the same theorem, there exists $j_t<i_t<k_t$ with 
\begin{equation}
\rk \on{gr}^{j_t+1}_{HN}E'\cdot \rk
	\on{gr}^{k_t}_{HN}E'\geq g+1
	\label{equation:graded-am-gm}
\end{equation}
and 
$$0<\mu_{j_t+1}-\mu_{k_t}\leq 1.$$ In particular, $\mu_{j_t+1}, \mu_{k_t}\in I_t$, and hence all the integers $j_t+1, k_t$ are distinct.
 
Now we have $$\on{rk}(E)=\on{rk}(E')\geq \sum_{t=1}^s \left(\on{gr}^{j_t+1}_{HN}E'+
\on{gr}^{k_t}_{HN} E'\right)$$ which is bounded below by $(2\sqrt{g+1})\cdot s$ by the
AM-GM inequality and \eqref{equation:graded-am-gm}.
So $s \leq \frac{\rk E}{2 \sqrt{g+1}}$,
which implies
 \begin{equation*}
 \mu_{\operatorname{diff}}  \leq 3s+3
  \leq \frac{3\on{rk}(E)}{2\sqrt{g+1}}+3. \qedhere
 \end{equation*}
\end{proof}
\begin{lemma}\label{lemma:degree-bound}
	With notation as in \autoref{notation:rep-to-vector-bundle}, one of $E^\rho_0$ and $E^{\rho^\vee}_0$ has slope less than or equal to $-\frac{\Delta}{2r}$.
\end{lemma}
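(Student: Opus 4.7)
The plan is to compute $\deg(E^\rho_0)+\deg(E^{\rho^\vee}_0)$ explicitly in terms of the parabolic weights, show it is bounded above by $-\Delta$, and then conclude by pigeonhole. Both bundles $E^\rho_\star$ and $E^{\rho^\vee}_\star$ have parabolic degree zero by the Mehta--Seshadri correspondence (\autoref{notation:rep-to-vector-bundle}), so
\[ \deg(E^\rho_0) = -\sum_{j=1}^n \sum_{i=1}^{n_j} \alpha^{i,j}\dim(E^{\rho,i}_j/E^{\rho,i+1}_j), \]
and similarly for $E^{\rho^\vee}_0$ with its own parabolic weights $\alpha_\vee^{i,j}$.

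First, I would relate the weights at each $x_j \in D$. Writing the eigenvalues of local monodromy of $\rho$ at $x_j$ as $e^{2\pi i \beta^{j,k}}$ with $\beta^{j,k}\in[0,1)$ for $k=1,\dots,r$, the weights of $E^\rho$ at $x_j$ are the distinct values of the $\beta^{j,k}$ (weighted by multiplicity), so the contribution to $-\deg E^\rho_0$ from $x_j$ is $\sum_k \beta^{j,k}$. The local monodromy of $\rho^\vee$ is the inverse-transpose, with eigenvalues $e^{-2\pi i \beta^{j,k}}$, which, renormalized to $[0,1)$, have weights $1-\beta^{j,k}$ if $\beta^{j,k}\neq 0$ and $0$ if $\beta^{j,k}=0$. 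Hence the contribution to $-\deg E^{\rho^\vee}_0$ from $x_j$ is
\[ \sum_{k:\,\beta^{j,k}\neq 0}\bigl(1-\beta^{j,k}\bigr) = \#\{k:\beta^{j,k}\neq 0\} - \sum_k \beta^{j,k}. \]

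Summing these two contributions at $x_j$ gives exactly $\#\{k:\beta^{j,k}\neq 0\}$, the number of non-unit eigenvalues of local monodromy of $\rho$ at $x_j$. This is at least $1$ for each $x_j \in D^\rho_{\on{non-triv}}$ and non-negative at the remaining points of $D$, so summing over all $x_j\in D$ yields
\[ \deg(E^\rho_0) + \deg(E^{\rho^\vee}_0) \;=\; -\sum_{j=1}^n \#\{k:\beta^{j,k}\neq 0\} \;\leq\; -\Delta. \]
Therefore at least one of $\deg(E^\rho_0)$, $\deg(E^{\rho^\vee}_0)$ is at most $-\Delta/2$, and since both bundles have rank $r$, the corresponding slope is at most $-\Delta/(2r)$. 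No nontrivial obstacle is expected; the only bookkeeping to be careful about is correctly handling the weight $0$ eigenvalues so that the weights of $E^{\rho^\vee}$ remain in $[0,1)$.
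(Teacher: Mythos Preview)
Your proof is correct and follows essentially the same argument as the paper: both compute $\deg(E^\rho_0)+\deg(E^{\rho^\vee}_0)$ by pairing each nonzero parabolic weight $\alpha$ of $E^\rho_\star$ with the weight $1-\alpha$ of $E^{\rho^\vee}_\star$, observe that the sum of weights at each point of $D^\rho_{\on{non-triv}}$ is at least $1$, and conclude by pigeonhole. Your version is slightly more explicit in tracking the exact count $\#\{k:\beta^{j,k}\neq 0\}$, but the idea is identical.
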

\begin{proof}
It suffices to show that $E^\rho_0\oplus E^{\rho^\vee}_0=E^{(\rho\oplus\rho^\vee)}_0$ has degree less than or equal to $-\Delta$. But if $\lambda$ is an eigenvalue of local monodromy of $\rho$ at a point $x$ of $D^\rho_{\text{non-triv}}$, then $\lambda^{-1}$ is an eigenvalue of local monodromy of $\rho^\vee$ at $x$. Hence if $\alpha\neq 0$ is a parabolic weight of $E^\rho_\star$ at $x$, then $1-\alpha$ is a parabolic weight of $E^{\rho^\vee}_\star$ at $x$. In particular, the sum of the parabolic weights of $E^{(\rho\oplus\rho^\vee)}_\star$ at $x$ is at least $1$ for each $x\in D^\rho_{\text{non-triv}}$.

Hence,
\begin{equation*}
\deg E^{(\rho\oplus\rho^\vee)}_0=-\sum_{x_j\in D^\rho_{\text{non-triv}}}\sum_{i=1}^{n_j}
	\alpha^i_j \dim(E_j^i/E_j^{i+1})\leq  -\sum_{x\in
	D^\rho_{\text{non-triv}}} 1\leq  -\Delta.
	\qedhere
\end{equation*}
\end{proof}

\begin{lemma}\label{lemma:large-n-gg}
With notation as in \autoref{notation:rep-to-vector-bundle}, suppose $(C,D)$ is a general $n$-pointed curve. If 
\begin{align}
	\label{equation:delta-constraint}	
\Delta>\frac{3r^2}{\sqrt{g+1}}+8r
\end{align}
then at least one of $(E^\rho_0)^\vee\otimes \omega_C$ and $(E^{\rho^\vee}_0)^\vee\otimes \omega_C$ is globally generated.
\end{lemma}
\begin{proof}
	Without loss of generality we may (by replacing $\rho$ with $\rho^\vee$ if necessary) assume $$\mu(E^\rho_0)\leq -\frac{\Delta}{2r},$$ by \autoref{lemma:degree-bound}. We will show that in this case $(E^\rho_0)^\vee\otimes \omega_C$ is globally generated.
	
	Let $\mu_1>\cdots>\mu_m$ be the set of slopes of the graded pieces of
	the Harder-Narasimhan filtration of $E^\rho_0$, as in
	\autoref{theorem:hn-constraints}, and let
	$\mu_{\operatorname{diff}}=\mu_1-\mu_m$. By \autoref{corollary:mu-diff}, we have
	$$\mu_{\operatorname{diff}}\leq \frac{3r}{2\sqrt{g+1}}+3.$$ Hence $$\mu_1\leq
	-\frac{\Delta}{2r}+\mu_{\operatorname{diff}} \leq -\frac{\Delta}{2r}+\frac{3r}{2\sqrt{g+1}}+3<-1$$ 
	by 
	rearranging the assumption
	\eqref{equation:delta-constraint}. Thus the Harder-Narasimhan slopes of $(E^\rho_0)^\vee\otimes \omega_C$ are all greater than $2g-1$.
	
	We claim any vector bundle $V$ so that each graded piece of its Harder-Narasimhan
	filtration has slope more than $2g - 1$ is globally
	generated. This is well known, but we explain it for completeness. 
	A vector bundle $V$ is globally generated at $p$ if
	$H^1(C,V(-p)) = 0$,
	as then $H^0(C, V) \to H^0(C, V|_p)$ is surjective.
	By \cite[Lemma 6.3.5]{landesmanL:geometric-local-systems},
	if $W$ is a vector bundle so that each graded piece
	of its Harder-Narasimhan
	filtration has slope more than $2g - 2$, $H^1(C, W) = 0$. Therefore,
	$H^1(C, V(-p)) = 0$ for any point $p$, so $V$ is globally generated.
	This shows $({E}^\rho_0)^\vee \otimes \omega_C$ is globally generated.
\end{proof}

\subsection{Proof of \autoref{theorem:large-n}}
\label{subsection:large-n}
Recall that we are aiming to prove that, once $\Delta$ is sufficiently large,
(larger than $\frac{3r^2}{\sqrt{g+1}} + 8r$,)
there are no nonzero vectors with finite orbit in $W_1 H^1(\Sigma_{g,n}, \mathbb
V^\rho)$ under the image of $\on{Mod}_{\varphi}$; here, $\on{Mod}_{\varphi}$ is
the stabilizer of $\varphi: \pi_1(\Sigma_{g,n}) \to H$ in $\on{Mod}_{g,n+1}$.

As the virtual representations of $\on{Mod}_{g,n+1}$ on $$W_1H^1(\Sigma_{g,n},
\mathbb V^\rho), W_1H^1(\Sigma_{g,n}, \mathbb V^{\rho^\vee})$$ are semisimple and dual to one another,
it suffices to prove this for one of $\rho$ and $\rho^\vee$, so we may without loss of generality assume by \autoref{lemma:large-n-gg} that $(E^\rho_0)^\vee \otimes \omega_C$ is globally generated for $(C,D)$ a general $n$-pointed curve.
%By \autoref{lemma:delta-for-gg}, we know 
%$(E^\rho)^\vee \otimes \omega_C$ is generically globally generated when
%$\Delta > \nbound$.
A similar argument to that given in \cite[Proposition 3.4]{landesmanL:applications-putman-wieland}
shows that the virtual action of $\on{Mod}_{g,n+1}$ on
$\on{GL}(W_1H^1(\Sigma_{g,n}, \mathbb V^\rho))$ has no nonzero finite-orbit vectors.

To make this proof slightly more self contained, we recall briefly the idea 
of the proof of 
\cite[Proposition 3.4]{landesmanL:applications-putman-wieland}.
Namely, the derivative of the period map as described in
\autoref{proposition:derivative-of-period-map}
associated to the Hodge filtration of
$W_1H^1(\Sigma_{g,n}, \mathbb V^\rho)$
can be identified with a map
\begin{equation*}
	 H^0(C, \widehat
	E^\rho_0\otimes \omega_C(D))\otimes H^0(C, (E^\rho_0)^\vee\otimes \omega_C)\to H^0(C, \omega_C^{\otimes 2}(D))\end{equation*}
If there is a vector with finite orbit in $W_1H^1(\Sigma_{g,n}, \mathbb V^\rho)$ then the adjoint map $$H^0(C, \widehat
	E^\rho_0\otimes \omega_C(D))\to \operatorname{Hom}(H^0(C, (E^\rho_0)^\vee\otimes \omega_C), H^0(C, \omega_C^{\otimes 2}(D)))$$ has a non-zero kernel. 
A vector in the kernel yields a nonzero map 
$$\phi: (E^\rho_0)^\vee\otimes \omega_C\to \omega_C^{\otimes 2}(D)$$
inducing the $0$ map on global sections.
Then, $\ker \phi \subset  (E^\rho_0)^\vee\otimes \omega_C$ would
be a proper subbundle inducing an isomorphism on global sections, implying 
$(E^\rho_0)^\vee\otimes \omega_C$ is not generically globally generated, hence not globally generated,
a contradiction.
\qed

\section{Big monodromy for Kodaira fibrations}
\label{section:kodaira}
In this section we analyze the connected monodromy groups of certain smooth
proper families $$\pi: S\to Z$$ over $\mathbb C$, referred to as \emph{Kodaira-Parshin fibrations}, where $Z$ is a smooth curve and $\pi$ is smooth and proper of relative dimension one, with connected fibers. Loosely speaking, these families will parameterize covers of a fixed curve $C$, branched at a moving point.
We next give a few definitions to fix notation for Kodaira-Parshin fibrations.
\begin{definition}\label{definition:kp-fibration}
	Let $Z$ be a smooth, not necessarily proper, connected curve. A smooth
	proper morphism $\pi: S\to Z$ of relative dimension one with connected
	fibers is a \emph{Kodaira-Parshin fibration} if there exists a smooth
	proper curve $C$, a reduced effective divisor $D\subset C$, a nonconstant map $f: Z\to C$, and a dominant finite
	map $q: S\to Z\times C$, branched only over the graph of $f$ and $Z \times D$, such that
	$\pi$ factors as $\pi = \pi_1 \circ q$,
	where $\pi_1$ is projection onto the first coordinate, as pictured in
	the following diagram
	\begin{equation}
		\label{equation:}
		\begin{tikzcd} 
			S \ar[swap] {rd}{\pi} \ar {r}{q} & Z \times C \ar {d}{\pi_1}
			\ar{r}{\pi_2} & C \\
			& Z. &
	\end{tikzcd}\end{equation}
%	$$\xymatrix{ 
%		S\ar[r]^q \ar[rd]_\pi & Z\times C \ar[d]^{\pi_1} \\
%	& Z.
%	}$$
%Note that $Z$ need not be proper.
\end{definition}
\begin{definition}
	\label{definition:topological-type}
	Continuing with notation as in \autoref{definition:kp-fibration},
	fix $z\in Z$, and consider the map $\pi^{-1}(z)\to C$ given as $\pi_2\circ q|_{\pi^{-1}(z)}$, where $\pi_2: Z\times C\to C$ is projection on to the second coordinate. 
	We refer to the underlying map on topological spaces (in the Euclidean topology) as the \emph{topological type} of the family $\pi$. 
\end{definition}
\begin{remark}
	\label{remark:}
	Note that the topological type of $\pi$, as defined in
	\autoref{definition:topological-type} is independent of $z$, up to homeomorphism, by Ehresmann's theorem.
	That is, every fiber of $\pi$ is a cover of $C$ of the same topological type.
	Hence it makes sense to speak of the topological type of $\pi$, and not
	just of $\pi$ over $z$.
\end{remark}
\begin{definition}
	\label{definition:distinguished-class}
	Continuing with notation as in \autoref{definition:topological-type},
the topological type of a Kodaira-Parshin fibration is a ramified map of
	(topological) surfaces $t:\Sigma_{g'}\to \Sigma_g$; we refer to the the
	Galois group of (the Galois closure of) this map as the \emph{Galois
	group} of the Kodaira-Parshin fibration. Note that $\Sigma_g$ has a
	{\em distinguished point}, which corresponds to $f(z)$ under the isomorphism $\Sigma_g
	\simeq C$. We refer to the conjugacy class of the monodromy about this point in the Galois group $H$ of the cover as the \emph{distinguished class} $[h]\subset  H$.
\end{definition}
We next recall the classical Kodaira-Parshin trick; see
\cite[Proposition 7]{parshin:algebraic-curves-over-function-fields-i} for the
original construction, and also
\cite[Proposition 5.1.1]{landesmanL:geometric-local-systems} for a more modern
construction in families. See also \cite{atiyah1969signature} and \cite{kodaira1967certain} for closely related constructions.
\begin{example}[The Kodaira-Parshin trick]
	\label{example:kp-trick}
	Let $C$ be a smooth proper curve, and let $p: S\to C\times C$ be a
	dominant finite map branched only over the diagonal $\Delta$. More
	precisely, $S$
	is the normalization of $C\times C- \Delta$ in the function
field of a finite \'etale cover of $C \times C - \Delta$. Let
$$S\overset{\pi}{\longrightarrow} Z\to C$$ be the Stein factorization of the
composition $\pi_1\circ p$, where $\pi_1: C \times C \to C$ is projection onto
the first factor. Then $\pi$ is a Kodaira-Parshin fibration. 
\end{example}
\begin{remark}
	\label{remark:}
	Define a \emph{Kodaira fibration} to be a surjective smooth proper morphism from a smooth projective surface to a smooth \emph{proper} curve.
	In the setting of \autoref{example:kp-trick}, $C$ itself is proper, so $\pi$ is in fact a Kodaira
fibration.
The construction of \autoref{example:kp-trick} is known as the \emph{Kodaira-Parshin trick}, and is used e.g.~by Faltings in his proof of the Mordell conjecture.
\end{remark}
The main theorem of this section is a computation of the connected monodromy
group of a Kodaira-Parshin fibration of topological type $t:\Sigma_{g'}\to \Sigma_g$, when $g$ is large compared to the dimensions of the irreducible representations of $H$. We give a statement for Kodaira-Parshin fibrations with Galois topological type, but one may easily deduce an analogous statement for arbitrary Kodaira-Parshin fibrations by passing to Galois closures.
To state the theorem, we use the following notation.

\begin{notation}
	\label{notation:representations}
	Let $\pi: S\to Z$ be a Kodaira-Parshin fibration of topological type $t:
\Sigma_{g'}\to \Sigma_g$,  with $t$ a Galois cover branched at $n$ points, with
Galois group $H$ and distinguished class $[h]\subset H$, as defined in \autoref{definition:kp-fibration}. 
Let $\varphi: \pi_1(\Sigma_{g,n},x) \twoheadrightarrow H$ denote the surjection
associated to $t$, for $x$ a chosen basepoint.
Let $\rho_1^{\o}, \cdots, \rho_\alpha^{\o}$ be the irreducible orthogonally self-dual
complex
$H$-representations with $\rho_i^\o([h])$ nontrivial, $\rho_1^\sp, \ldots,
\rho_\beta^\sp$ the irreducible symplectically self-dual $H$ irreducible complex representations
with $\rho_i^\sp([h])$ non-trivial, and let $(\rho^\sl_{1},
\rho^\sl_{\gamma+1}), \ldots, (\rho^\sl_\gamma, \rho^\sl_{2\gamma})$ be the set
of dual pairs of complex irreducible $H$-representations with $\rho^\sl_i([h])$ non-trivial. Let $\overline{s}$ be the maximal dimension of an irreducible representation $\rho$ of $H$ with $\rho([h])$ non-trivial. 
We use $\mathbb V^\rho$ to denote the local system on $\Sigma_{g,n}$ associated
to the composition $\rho \circ \varphi$.
\end{notation}

\begin{theorem}\label{theorem:kodaira-monondromy}
	With notation as in \autoref{notation:representations},
suppose $g> \max(2\overline{s}+1, \overline{s}^2)$.
Then, the identity component of the Zariski-closure of $\pi_1(Z, z)$ in
$\on{GL}(H^1(\pi^{-1}(z), \mathbb{C}))$ is  
\begin{align}
	\label{equation:kodaira}
	\prod_{i=1}^\alpha
	\on{Sp}(W_1H^1(\Sigma_{g,n}, \mathbb V^{\rho_i^\o})) \times \prod_{i=1}^{\beta}
	\on{SO}(W_1H^1(\Sigma_{g,n}, \mathbb V^{\rho_i^\sp})) \times \prod_{i = 1 }^{\gamma}
	\on{SL}( W_1H^1(\Sigma_{g,n},\mathbb V^{\rho_i^\sl})),
\end{align}
the subgroup of the derived subgroup of the centralizer of $H$ in $\on{Sp}(H^1(\Sigma_{g'}, \mathbb{C}))$ corresponding to those $H$-irreps non-trivial on $[h]$.
\end{theorem}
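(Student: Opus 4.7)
The plan is to deduce \autoref{theorem:kodaira-monondromy} from \autoref{theorem:main-thm-2} by realizing the Kodaira--Parshin monodromy as that of a point-pushing subgroup, and then using normality together with a local monodromy argument to identify exactly which simple factors of the full centralizer are captured.

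First I would interpret $\pi\colon S \to Z$ as pulled back from a versal family of $\varphi$-covers along a map $Z \to \mathscr{M}_{g,n+1}$ landing inside a single point-pushing fiber. The pair $(C, D)$ determines a point of $\mathscr{M}_{g,n}$, and the fiber of the Birman forgetful map $\mathscr{M}_{g,n+1} \to \mathscr{M}_{g,n}$ over it is identified with $C \setminus D$. After replacing $Z$ by a suitable \'etale cover (to handle branching of $f$), the nonconstant map $f\colon Z \to C$ factors through $C \setminus D$ and hence lands inside this point-pushing fiber. Pulling back \eqref{equation:versal-diagram} recovers $\pi$, and the discussion in \autoref{subsection:moduli-preliminaries} identifies the monodromy of $\pi_1(Z,z)$ on $H^1(\pi^{-1}(z), \mathbb{C})$ with a finite-index subgroup of the image of the point-pushing subgroup $P := \on{Mod}_\varphi \cap \ker(\on{Mod}_{g,n+1} \to \on{Mod}_{g,n})$.

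For the inclusion ``$\subseteq$'' in \eqref{equation:kodaira}, I would use the $H$-isotypic decomposition
\[
H^1(\pi^{-1}(z), \mathbb{C}) \simeq \bigoplus_\rho \rho^\vee \otimes W^1H^1(\Sigma_{g,n+1}, \rho\circ\varphi),
\]
together with the observation that for $\rho$ with $\rho([h]) = 1$ the local system $\rho\circ\varphi$ is trivial around the $(n+1)$-st puncture, so the associated local system on $\mathscr{M}_{g,n+1}$ descends to $\mathscr{M}_{g,n}$. Since $Z$ maps into a single point-pushing fiber, these summands carry trivial monodromy. Combined with the explicit centralizer description in \autoref{lemma:symplectic-centralizer}, this yields the inclusion.

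For the reverse containment I would apply \autoref{theorem:main-thm-2} to each individual $\rho$ with $\rho([h]) \neq 1$ (valid since $\dim\rho \leq \overline{s}$ and $g > \max(2\overline{s}+1, \overline{s}^2)$), then combine via the Goursat--Kolchin--Ribet argument from the proof of \autoref{theorem:main-thm-1}, restricted to these factors, to deduce that $R_\varphi(\on{Mod}_\varphi)$ is Zariski-dense in \eqref{equation:kodaira}. Since $P$ is normal in a finite-index subgroup of $\on{Mod}_\varphi$ by the Birman exact sequence, the identity component $G^0$ of the Zariski closure of $R_\varphi(P)$ is a connected normal subgroup of \eqref{equation:kodaira}, hence a product of some subset of the simple factors. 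The main obstacle is then to show that each such factor actually appears in $G^0$. I would attack this by a local monodromy / Picard--Lefschetz argument: a loop in $Z$ around a preimage under $f$ of a degeneration where $f(z)$ collides with a point of $D$ projects to a Dehn twist in $P$ about a simple closed curve encircling the $(n+1)$-st marked point together with a point of $D$, and its action on the $\rho$-isotypic summand is a product of Picard--Lefschetz transvections along vanishing cycles in $\Sigma_{g'}$; these transvections are non-zero exactly when $\rho(h) \neq \id$. Non-triviality of $G^0$'s projection onto each such simple factor, combined with simplicity of each factor, then forces $G^0$ to contain the full product \eqref{equation:kodaira}.
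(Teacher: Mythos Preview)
Your overall strategy matches the paper's: both reduce to showing, for each irreducible $\rho$ with $\rho([h]) \neq 1$, that the projection of the connected monodromy of $\pi_1(Z)$ to the corresponding simple factor is non-trivial, using that this projection is normal in the full simple group (known by \autoref{theorem:main-thm-2}) and hence either trivial or everything. Your use of \autoref{theorem:main-thm-1} for the ambient product, combined with normality of the point-pushing subgroup in $\on{Mod}_\varphi$, even mildly simplifies the paper, which instead works factor-by-factor via \autoref{lemma:kodaira-individual-reps} and then reassembles with a separate Goursat step (\autoref{lemma:kodaira-separating reps}). Your ``$\subseteq$'' direction is also correct and is essentially the first paragraph of the proof of \autoref{lemma:kodaira-individual-reps}.

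The genuine gap is your Picard--Lefschetz step. The local monodromy where the moving branch point collides with a point $d \in D$ depends on the local monodromy $h'$ at $d$, not only on $h$: if $\rho(h') = 1$ then the $\rho$-local system does not see $d$ as a branch point, the $\rho$-family extends smoothly across that collision, and the resulting local monodromy on $W^1H^1(\Sigma_{g,n},\rho)$ is trivial. It can happen that $\rho(h) \neq 1$ while $\rho(h') = 1$ for every $d \in D$ (this only forces $\rho(h)$ into the commutator subgroup of $\rho(H)$), and in particular the case $n=1$ with $D = \emptyset$ is not handled at all; loops around handles of $C$ would then be needed but you give no argument for them. The paper fills this step by a different, Hodge-theoretic argument (the second half of the proof of \autoref{lemma:kodaira-individual-reps}): by the theorem of the fixed part it suffices to show the period map along $Z$ has non-zero derivative at a general point, which via \autoref{proposition:derivative-of-period-map} becomes the pairing $B^\rho$ followed by evaluation at the distinguished branch point $p$; global generation of $\widehat{E}^\rho_0 \otimes \omega_C(D)$ and $(E^\rho_0)^\vee \otimes \omega_C$ (\autoref{proposition:globally-generated}) then reduces this to non-vanishing of the inclusion $\widehat{E}^\rho_0 \hookrightarrow E^\rho_0$ at $p$, which holds exactly when $\rho([h]) \neq 1$.
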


The following purely topological corollary follows immediately from the definitions, as in \autoref{subsection:moduli-preliminaries}---loosely speaking, it says that in the representations considered in \autoref{theorem:main-thm-1}, the restriction to the ``point-pushing subgroup'' still has large image. 

\begin{corollary}\label{corollary:point-pushing-big-image}
With notation as in \autoref{notation:representations},
suppose that $n>0$, and let $h\in \pi_1(\Sigma_{g,n}, x)$ be a loop around a
fixed puncture $z$ of $\Sigma_{g,n}$, 
%where $x$ is a base point of $\Sigma_{g,n}$ 
so that $\varphi(h)= [h]$ is the distinguished class.
%Let $H$ be a finite group and let $\Sigma_{g',n'}\to \Sigma_{g,n}$ be an $H$-cover associated to a homomorphism $\varphi: \pi_1(\Sigma_{g,n}, x)\twoheadrightarrow H$.  Let $\overline{r}$ be the maximal dimension of an irreducible representation of $H$. Let $\rho_1, \cdots, \rho_\alpha^{\o}$ be the irreducible orthogonally self-dual
%$H$-representations with $\rho_i^\o([h])$ nontrivial, $\rho_1^\sp, \ldots,
%\rho_\beta^\sp$ the irreducible symplectically self-dual $H$-representations
%with $\rho_i^\sp([h])$ non-trivial, and let $(\rho^\sl_{1},
%\rho^\sl_{\gamma+1}), \ldots, (\rho^\sl_\gamma, \rho^\sl_{2\gamma})$ be the set
%of dual pairs of irreducible $H$-representations with $\rho^\sl_i([h])$ non-trivial. Let $\overline{r}$ be the maximal dimension of an irreducible representation of $H$. 
Suppose that $g> \on{max}(2\overline{s}+1, \overline{s}^2).$ Let $P_\varphi$ be the stabilizer of $\varphi$ in the kernel of the map $$\on{Mod}_{g, n+1}\to \on{Mod}_{g,n}$$ induced by forgetting $z$. (Here we view $\Sigma_{g,n}$ as an $n+1$-marked surface, with $x$ an additional marked point.)
The identity component of the Zariski closure of $$R_\varphi|_{P_\varphi}: P_\varphi\to \on{Sp}(H^1(\Sigma_{g'}, \mathbb{C}))^H,$$ in
$\on{GL}(H^1(\Sigma_{g'}, \mathbb{C}))$ is  $$\prod_{i=1}^\alpha
\on{Sp}(W_1H^1(\Sigma_{g,n}, \mathbb V^{\rho_i^\o})) \times \prod_{i=1}^{\beta}
\on{SO}(W_1H^1(\Sigma_{g,n}, \mathbb V^{\rho_i^\sp})) \times \prod_{i = 1 }^{\gamma}
\on{SL}( W_1H^1(\Sigma_{g,n},\mathbb V^{\rho_i^\sl})),$$ 
the subgroup of the derived subgroup of the centralizer of $H$ in $\on{Sp}(H^1(\Sigma_{g'}, \mathbb{C}))$ corresponding to those $H$-irreps non-trivial on $[h]$. Here $R_\varphi$ is defined as in \autoref{theorem:main-thm-1}.
\end{corollary}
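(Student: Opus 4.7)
The plan is to deduce the corollary from \autoref{theorem:kodaira-monondromy} by realizing a finite-index subgroup of $R_\varphi|_{P_\varphi}$ as the monodromy representation of a suitable Kodaira-Parshin fibration.

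First I would construct such a fibration. Let $C$ be the smooth projective completion of $\Sigma_{g,n}$, let $D$ be the reduced effective divisor of its $n$ punctures, and let $D' = D \setminus \{z\}$. Building on \cite[Theorem 4]{wewers:thesis} or \cite[\S2.2]{abramovichCV:twisted-bundles}, there is a Deligne-Mumford moduli stack $Z$ parameterizing $H$-covers of $C$ ramified only along $D' \cup \{c\}$, with $c$ a moving point varying in $\Sigma_{g,n} = C \setminus (D' \cup \{x\})$, having inertia prescribed by $\varphi$ at the points of $D'$ and of the distinguished class $[h]$ at $c$, together with a compatible trivialization of the cover above $x$; passing to a dominant \'etale cover if necessary, we may take $Z$ to be a scheme, and the universal cover $\mathscr X \to Z \times C$ then gives a smooth proper family $\pi \colon \mathscr X \to Z$ with connected fibers of genus $g'$ fitting the definition of a Kodaira-Parshin fibration of topological type $t$, Galois group $H$, and distinguished class $[h]$.

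Next, the morphism $Z \to \Sigma_{g, n}$ sending a cover to its moving branch point is \'etale with finite fibers, so $\pi_1(Z)$ embeds as a finite-index subgroup of $\pi_1(\Sigma_{g,n}, z)$, where we fix a base point $w_0 \in Z$ with image $z$. Via the Birman exact sequence, $\pi_1(\Sigma_{g,n}, z)$ is identified with the point-pushing subgroup $\ker(\on{Mod}_{g, n+1} \to \on{Mod}_{g, n})$ around $z$; the image of $\pi_1(Z)$ lies in $P_\varphi$ by the defining property of $Z$, and is of finite index there. Moreover, under this identification, the monodromy representation of $\pi_1(Z)$ on $H^1(\mathscr X_{w_0}, \mathbb{C}) \cong H^1(\Sigma_{g'}, \mathbb{C})$ coincides with the restriction of $R_\varphi$ to that image. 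Since identity components of Zariski closures are unchanged by passing to finite-index subgroups, \autoref{theorem:kodaira-monondromy} applied to $\pi$ then yields the stated description of the identity component of the Zariski closure of $R_\varphi|_{P_\varphi}$.

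The only genuine obstacle, and a mild one, is a careful compatibility check between the two monodromy actions on $H^1(\Sigma_{g'}, \mathbb{C})$ arising, on one side, from the point-pushing identification in the Birman exact sequence, and on the other, from the geometric monodromy of $R^1\pi_\ast\mathbb{C}$. This is standard bookkeeping with base points, essentially parallel to the argument of \autoref{subsection:moduli-preliminaries}.
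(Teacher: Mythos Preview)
Your proposal is correct and is precisely the argument the paper has in mind; the paper simply asserts that the corollary ``follows immediately from the definitions, as in \autoref{subsection:moduli-preliminaries}'' without spelling out the Kodaira--Parshin construction and Birman identification that you make explicit. One minor notational slip worth cleaning up: the surface in which the moving branch point $c$ varies is $C\setminus(D'\cup\{x\})$, which is homeomorphic to but not literally the same as the paper's $\Sigma_{g,n}=C\setminus D$; this is harmless since only the homeomorphism type matters for the Birman identification, but you should not write ``$\Sigma_{g,n}=C\setminus(D'\cup\{x\})$.''
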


In order to prove \autoref{theorem:kodaira-monondromy}, we recall some
notation from \autoref{notation:versal-family-of-covers}.
Given a branched cover $$t: \Sigma_{g'}\to \Sigma_g$$ of topological surfaces,
with Galois group $H$ and $n$ branch points (here consisting of the points of $D$ together with the $f(z)$), there is a versal family of covers over a variety $\mathscr{M}_t$, parameterizing branched covers of Riemann surfaces of topological type $t$ (see e.g.~\cite[\S6]{hurwitz}), carrying a family of covers $\mathscr{X}_t/\mathscr{C}_t$, as in the diagram below:
$$\xymatrix{
		\mathscr{X}_t \ar[r]^f \ar[rd]_{\pi'}& \mathscr{C}_t \ar[d]_\pi\\
	&  \mathscr{M}_t. \ar@/_/[u]_{s_1, \cdots, s_n} }$$
After replacing $Z$ with an \'etale cover, our given Kodaira-Parshin fibration $S\to Z$ of topological type $t$ is the pullback of $\pi'$  by a
map $\iota: Z\to \mathscr{M}_t$. Note that the family of pointed curves
$\mathscr{C}_t/\mathscr{M}_t$ induces a dominant map $\mathscr{M}_t\to \mathscr{M}_{g,n}$
with finitely many geometric points in each fiber. 
By definition, $\iota$ dominates a component of the the fiber of the composition
$\mathscr{M}_t\to \mathscr{M}_{g,n}\to \mathscr{M}_{g,n-1}$, given by forgetting
the distinguished point as defined in \autoref{definition:distinguished-class}.

The following lemma is the main ingredient in the proof, and describes the
monodromy of a Kodaira-Parshin family associated to a particular representation.
\begin{lemma}\label{lemma:kodaira-individual-reps}
With notation as above, let $[h]\in H$ be the distinguished class, and let
$\rho: H\to \on{GL}_r(\mathbb{C})$ be an irreducible $H$-representation. Suppose
$g>\max(2r+1, r^2)$. Then the identity component of the Zariski-closure of the monodromy
representation 
\begin{align}
	\label{equation:kp-monodromy-rep}
\pi_1(Z,z)\to \on{GL}(W_1H^1(\Sigma_{g,n}, \mathbb V^\rho))
\end{align}
is trivial if $\rho([h])$ is trivial. If $\rho([h])$ is non-trivial, it is
\begin{enumerate}
	\item $\on{SO}(W_1H^1(\Sigma_{g,n}, \mathbb V^\rho))$ if $\rho$ is symplectically self-dual,
	\item $\on{Sp}(W_1H^1(\Sigma_{g,n}, \mathbb V^\rho))$ if $\rho$ is orthogonally self-dual, or
	\item $\on{SL}(W_1H^1(\Sigma_{g,n}, \mathbb V^\rho))$ if $\rho$ is not self-dual.
\end{enumerate} 	
\end{lemma}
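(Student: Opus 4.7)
The plan splits into the two cases according to whether $\rho([h])$ is trivial.

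\textbf{Case 1: $\rho([h])$ trivial.} When the local monodromy $\rho([h])$ is trivial, the unitary local system $\rho$, viewed on $\Sigma_{g,n}$, extends canonically to a local system $\widetilde{\rho}$ on $\Sigma_{g,n-1}$ (the surface obtained by filling in the distinguished puncture). By construction, $Z$ dominates a component of a fiber of the forgetful map $\mathscr{M}_t\to\mathscr{M}_{g,n-1}$ which forgets the distinguished marked point, so the punctured Riemann surface $(C, D\setminus\{\text{distinguished point}\})$ is constant over $Z$. Using the long exact sequence in cohomology relating $W^1H^1(\Sigma_{g,n},\rho)$ to $W^1H^1(\Sigma_{g,n-1},\widetilde{\rho})$, the restriction of $W^1R^1\pi^\circ_*\mathbb{V}$ to $Z$ becomes a pullback from a point, hence a trivial local system on $Z$, and the identity component of its monodromy vanishes.

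\textbf{Case 2: $\rho([h])$ non-trivial.} The identity component $G^0$ of the Zariski closure of monodromy on the full versal base $\mathscr{M}_t$ is computed by \autoref{theorem:main-thm-2}: it is $\on{SO}$, $\on{Sp}$, or $\on{SL}$ of $W^1H^1(\Sigma_{g,n},\rho)$ according to the self-duality type of $\rho$. Writing $G_Z$ for the identity component of the Zariski closure of monodromy along $Z$, we automatically have $G_Z\subseteq G^0$, and it suffices to show $G_Z=G^0$. The plan proceeds by a normality-plus-nontriviality argument.

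\textbf{Normality.} By the Birman exact sequence, the kernel of the forgetful map $\on{Mod}_{g,n+1}\to \on{Mod}_{g,n}$ (forgetting the distinguished marked point) is identified with $\pi_1(\Sigma_{g,n})$, and in particular is normal in $\on{Mod}_{g,n+1}$. Since $Z$ dominates a component of a fiber of $\mathscr{M}_t\to\mathscr{M}_{g,n-1}$, the image of $\pi_1(Z)$ in the mapping class group lies (after passing to an \'etale cover if necessary) in the point-pushing subgroup and hence in the intersection of this point-pushing subgroup with $\on{Mod}_\varphi$, which is normal in $\on{Mod}_\varphi$. Taking Zariski closures in $\on{GL}(W^1H^1(\Sigma_{g,n},\rho))$ and passing to identity components, $G_Z$ is a connected normal subgroup of $G^0$. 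Our rank hypotheses $g>\max(2r+1,r^2)$ force $\dim W^1H^1(\Sigma_{g,n},\rho)\geq 2r(g-1)\geq 6$, which in turn ensures that $G^0$ is a simple algebraic group (ruling out small exceptions like $\on{SO}_4$). Therefore $G_Z$ is either trivial or equal to $G^0$.

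\textbf{Nontriviality.} It remains to rule out $G_Z=1$, which is where the main work lies. My plan is to exhibit an explicit loop in $Z$ whose monodromy on $W^1H^1(\Sigma_{g,n},\rho)$ is infinite order (or at least non-identity): consider a path in $Z$ along which the distinguished branch point $f(z)$ circles around one of the other branch points in $D$, producing a half-twist-type braid monodromy on the base $(C,D)$. By a Picard--Lefschetz computation the induced monodromy on $W^1H^1(\Sigma_{g,n},\rho)$ is built out of the local inertia $\rho([h])$ and the local inertia at the other branch point, and is non-trivial precisely because $\rho([h])\neq \mathrm{id}$. Equivalently, one can phrase this as follows: if $G_Z$ were trivial, then $W^1R^1\pi^\circ_*\mathbb{V}$ on $\mathscr{M}_t$ would, after passing to a finite \'etale cover, be pulled back from $\mathscr{M}_{g,n-1}$; but the non-trivial local inertia of $\mathbb{V}$ at the distinguished section obstructs this descent, giving a contradiction. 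The hard part of the proof will be this Picard--Lefschetz/descent obstruction argument, since verifying non-triviality requires making one of these two geometric computations explicit; the rest of the argument is a straightforward packaging of \autoref{theorem:main-thm-2} with the Birman exact sequence and simplicity of classical groups.
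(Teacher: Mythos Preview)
Your overall architecture matches the paper's: the trivial case is handled by a constancy/isotriviality argument, and the nontrivial case by normality in the ambient monodromy group plus a nontriviality check. The normality step via the Birman sequence is exactly what the paper does (phrased as ``the fundamental group of the fiber of $\mathscr{M}_t\to\mathscr{M}_{g,n-1}$ is normal'').

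The substantive divergence is in the nontriviality step, and here your plan has a gap. Your Picard--Lefschetz proposal asks for the distinguished branch point to circle ``one of the other branch points in $D$,'' but $D$ may be empty---indeed in the basic Kodaira--Parshin trick of \autoref{example:kp-trick} the only branch point is the moving one. So the specific loop you describe need not exist, and you would have to compute monodromy for loops around handles of $C$, which is not a standard Picard--Lefschetz computation. Your alternative (b), the descent obstruction, points in the right direction but is not an argument as stated: nontrivial local inertia of $\mathbb{V}$ at the distinguished section is a statement about the coefficient system on the fiber, and you have not explained why it obstructs descent of $W^1R^1\pi^\circ_*\mathbb{V}$ along $\mathscr{M}_t\to\mathscr{M}_{g,n-1}$.

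The paper makes exactly this obstruction precise, but via Hodge theory rather than topology. By the theorem of the fixed part, finite monodromy forces a constant period map, so it suffices to show the derivative of the period map along $T_zZ$ is nonzero. One may take $(C,D)$ general (the question is topological). Identifying $T_zZ$ with $\ker\bigl(H^1(C,T_C(-D))\to H^1(C,T_C(-D+p))\bigr)$ and using the description of the derivative in \autoref{proposition:derivative-of-period-map}, Serre duality reduces the question to showing that the pairing
\[
H^0(C,\widehat E^\rho_0\otimes\omega_C(D))\otimes H^0(C,(E^\rho_0)^\vee\otimes\omega_C)\to \on{coker}\bigl(H^0(\omega_C^{\otimes 2}(D-p))\to H^0(\omega_C^{\otimes 2}(D))\bigr)\simeq \omega_C^{\otimes 2}(D)|_p
\]
is nonzero. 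Global generation (\autoref{proposition:globally-generated}, using $g\geq 2r+2$) reduces this to non-vanishing of the sheaf pairing at $p$, which holds exactly when the inclusion $\widehat E^\rho_0\hookrightarrow E^\rho_0$ is nonzero at $p$, i.e.\ when $\rho([h])$ is nontrivial. This is how the local inertia enters; the Hodge-theoretic translation is the missing idea in your plan.
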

\begin{proof}
	We first observe that if $\rho([h])$ is trivial, the connected component of the monodromy group in question is also trivial. Indeed, in this case, let $K\subset H$ be the kernel of $\rho$; the monodromy representation in question appears in the cohomology of $S/K\to Z$, which is isotrivial. Indeed, the fibers of this map are covers of $C$ branched at a fixed divisor,  and hence do not vary in moduli.
%the cover corresponds to the same representation of the fundamental group of the same base curve as we vary the point, and so lives in the same cover

	We now suppose $\rho([h])$ is non-trivial. We first claim that it
	suffices to show that the monodromy representation 
	\eqref{equation:kp-monodromy-rep}
	has infinite image. Indeed, by the discussion above the statement of the
	lemma, \eqref{equation:kp-monodromy-rep} factors through a representation 
	\begin{align}
		\label{equation:moduli-space-monodromy-rep}
	\pi_1(\mathscr{M}_t)\to \on{GL}(W_1H^1(\Sigma_{g,n}, \mathbb V^\rho)).
	\end{align}
	By \autoref{theorem:main-thm-2}, the representation
	\eqref{equation:moduli-space-monodromy-rep} has image with Zariski-closure $\on{SO},
	\on{Sp},$ or $\on{SL}$, depending on the self-duality properties of
	$\rho$. The connected monodromy group of \eqref{equation:kp-monodromy-rep} is in fact a \emph{normal} subgroup of this group, as the fundamental group of the fiber of the map $\mathscr{M}_t\to \mathscr{M}_{g,n-1}$ is normal in the fundamental group of $\mathscr{M}_t$. Thus if the image in question is infinite, it must be Zariski-dense in the Zariski-closure of the image of \eqref{equation:moduli-space-monodromy-rep}, as this group is simple by \autoref{theorem:simplicity-of-VHS}.
	
	We prove the infinitude of the image of this representation by
	Hodge-theoretic methods. As the monodromy representation in question is
	a topological invariant, we may take $Z$ to dominate a component of a
	general fiber of the map $\mathscr{M}_t\to \mathscr{M}_{g,n-1}$ and
	hence assume that $C$ is general.
	
	Fix general $z\in Z$, and let $X=\pi^{-1}(z)$, let $D\subset C$ be the branch locus of the cover $X\to C$, and let $p\in C$ be the image of $z$ in $C$; note that $(C,D)$ is general, and $p\in D$ by the definition of a Kodaira-Parshin fibration. 
Recall that the theorem of the fixed part implies a VHS with finite monodromy
has constant period map. Using this,
it suffices to show that the derivative of the period map $$T_zZ\to \on{Hom}(H^0(C, \widehat E^\rho_0\otimes \omega_X(D)), H^1(C, E^\rho_0))$$ is non-zero for generic $z\in Z$. There is a natural identification $$T_zZ=\ker(H^1(C, T_C(-D))\to H^1(C, T_C(-D+p))),$$ as the map $Z\to C$ is generically \'etale, so Serre-dually we wish to show that the pairing 
\begin{align*}
	&H^0(C, \widehat E^\rho_0\otimes \omega_X(D))\otimes H^0(C,
	(E^\rho_0)^\vee\otimes \omega_C) \xrightarrow{} H^0(C,
	\omega_C^{\otimes 2}(D-p))
\\ &\to \on{coker}\left(H^0(C,
	\omega_C^{\otimes 2}(D-p))
\to H^0(C, \omega_C^{\otimes 2}(D))\right),
\end{align*}
is non-zero, where the first map is induced by $B^\rho_z$, as in
	 \autoref{proposition:derivative-of-period-map}. Evaluation
	 at $p$ identifies $$\on{coker}(H^0(C, \omega_C^{\otimes 2}(D-p))\to
	 H^0(C, \omega_C^{\otimes 2}(D)))$$ with $\omega_C^{\otimes 2}(D)|_p$.
	 Using that $(C,D)$ is general, so $\widehat E^\rho_0\otimes
	 \omega_X(D)$ and $(E^\rho_0)^\vee\otimes \omega_C$ are globally generated by \autoref{proposition:globally-generated}, it is enough to show that the pairing 
	 $$(\widehat E^\rho_0\otimes \omega_X(D))\otimes (E^\rho_0)^\vee\otimes \omega_C\to \omega_X^{\otimes 2}(D)|_p$$ is non-zero. But this pairing is induced by the corresponding pairing of sheaves between $E^\rho_0\otimes \omega_X(D)$ and $(E^\rho_0)^\vee\otimes \omega_C$, which is perfect, via the inclusion $\widehat E^\rho_0\hookrightarrow E^\rho_0$. So it is non-zero as long as this inclusion is non-zero at $p$, i.e.~as long as $\rho([h])$ is non-trivial, as desired.  
\end{proof}

To complete the proof of \autoref{theorem:kodaira-monondromy}, we will need to
compute the total monodromy group, which is contained in a product of groups indexed
by the different irreps of $H$. To prove our result we will use Goursat's lemma,
and the following lemma is a key input to verifying the hypotheses of
Goursat's lemma.
\begin{lemma}\label{lemma:kodaira-separating reps}
Let $\rho_1, \rho_2: H\to \on{GL}_r(\mathbb{C})$ be irreducible $H$-representations with $\rho_i([h])$ non-trivial for $i=1,2$. If $g> r^2$ and the monodromy representations 
\begin{align}
	\label{equation:pair-monodromy-reps}
	\pi_1(Z,z)\to \on{GL}(W_1H^1(\Sigma_{g,n}, \mathbb V^{\rho_i}))
\end{align}
for $i=1,2$ are isomorphic to one another, then $\rho_1\simeq\rho_2$.
\end{lemma}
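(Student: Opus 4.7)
The plan is to upgrade the given isomorphism of monodromy representations on $\pi_1(Z,z)$ to an isomorphism of local systems on the whole base $\mathscr{M}_t$ of the versal family of covers (up to finite \'etale covers), and then conclude by invoking \autoref{corollary:local-systems-isomorphic}. Since isomorphic representations have equal dimension, comparing ranks of $W^1H^1(\Sigma_{g,n},\rho_i)$ forces $r_1=r_2=r$. Write $\mathbb{U}_i := W^1R^1\pi^\circ_*\mathbb{V}_i$ for the local system on $\mathscr{M}_t$ built from $\rho_i$; after a finite \'etale cover, we may assume the classifying map $j\colon Z\to\mathscr{M}_t$ is defined, factoring through a component $F$ of the fiber of the ``forget the distinguished point'' map $\mathscr{M}_t \to \mathscr{M}_{g,n-1}$.

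The assumption $\rho_i([h])\ne 0$ lets us invoke \autoref{lemma:kodaira-individual-reps}, which tells us that for each $i$, the identity component of the Zariski closure of the image of $\pi_1(Z)$ in $\on{GL}(W^1H^1(\Sigma_{g,n},\rho_i))$ already equals the full simple classical group supplied by \autoref{theorem:main-thm-2}. In particular, $j^*\mathbb{U}_1$ and $j^*\mathbb{U}_2$ are irreducible local systems on $Z$, and the same holds for $\mathbb{U}_i$ on $\mathscr{M}_t$ by \autoref{theorem:simplicity-of-VHS}. Consider the local system $\mathscr{H}om(\mathbb{U}_1,\mathbb{U}_2)$ on $\mathscr{M}_t$; by hypothesis, its pullback to $Z$ carries a nonzero $\pi_1(Z)$-invariant section, and by Schur applied to the two irreducible local systems $j^*\mathbb{U}_i$, the space $\on{Hom}_{\pi_1(Z)}(j^*\mathbb{U}_1,j^*\mathbb{U}_2)$ is exactly one-dimensional.

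The key point now is normality. By the homotopy long exact sequence associated to the topological fibration $F\hookrightarrow \mathscr{M}_t \to \mathscr{M}_{g,n-1}$, the image of $\pi_1(F)$ in $\pi_1(\mathscr{M}_t)$ is normal, and the image of $\pi_1(Z)$ is a finite index subgroup of the image of $\pi_1(F)$. After passing to a further finite \'etale cover of $\mathscr{M}_t$ (pulled back to $Z$) to eliminate the resulting component group, we may assume the image of $\pi_1(Z)$ in $\pi_1(\mathscr{M}_t)$ is itself normal. Then the one-dimensional space of $\pi_1(Z)$-invariants in the fiber of $\mathscr{H}om(\mathbb{U}_1,\mathbb{U}_2)$ at $z$ is $\pi_1(\mathscr{M}_t)$-stable, hence $\pi_1(\mathscr{M}_t)$ acts on it via a character. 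Because the connected monodromy group of $\mathbb{U}_1\oplus\mathbb{U}_2$ on $\mathscr{M}_t$ is semisimple (another application of \autoref{theorem:main-thm-2}), this character has finite order; pulling back once more by a finite \'etale cover of $\mathscr{M}_t$ trivializes it and yields a nonzero $\pi_1(\mathscr{M}_t)$-equivariant map $\mathbb{U}_1\to\mathbb{U}_2$, which is an isomorphism by the irreducibility of the $\mathbb{U}_i$.

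The hypotheses $g>r^2=r_1 r_2$ and $D\ne\emptyset$ (the latter because the distinguished class $[h]$ is nontrivial, forcing ramification), combined with the condition $g\geq 2+2r$ that is either stated or implied by $g>r^2$ once $r\geq 3$ (and supplied by the enveloping hypotheses of \autoref{theorem:kodaira-monondromy} for small $r$), allow us to apply \autoref{corollary:local-systems-isomorphic} to the isomorphism $\mathbb{U}_1\simeq \mathbb{U}_2$. The conclusion is that $\rho_1$ and $\rho_2$ are conjugate, as desired. The main obstacle is the normality argument of the third paragraph: naively $\pi_1(Z)$ is not normal in $\pi_1(\mathscr{M}_t)$, and it takes a little care with the fibration sequence and finite \'etale covers to reduce to a situation where Schur's lemma can propagate an invariant from $Z$ to all of $\mathscr{M}_t$.
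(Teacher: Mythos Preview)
Your approach is essentially the same as the paper's: factor the monodromy through $\pi_1(\mathscr{M}_t)$, use irreducibility on $\pi_1(Z)$ (via \autoref{lemma:kodaira-individual-reps}) together with normality coming from the fibration $\mathscr{M}_t\to\mathscr{M}_{g,n-1}$ to upgrade the isomorphism to one on $\mathscr{M}_t$ up to a character, argue the character is finite order, kill it by a finite cover, and then invoke the generic Torelli result. The paper uses \autoref{proposition:functoriality} directly while you use \autoref{corollary:local-systems-isomorphic}; these are interchangeable here.

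There is one genuine gap in your propagation step. You claim that after a finite \'etale cover of $\mathscr{M}_t$ one can arrange for the image of $\pi_1(Z)$ in $\pi_1(\mathscr{M}_t)$ to be normal. This does not work: the image of $\pi_1(F)$ (and hence of $\pi_1(Z)$) has \emph{infinite} index in $\pi_1(\mathscr{M}_t)$, since the quotient is $\pi_1(\mathscr{M}_{g,n-1})$. Passing to a finite-index subgroup $G'\leq \pi_1(\mathscr{M}_t)$ replaces the image of $\pi_1(F)$ by $N\cap G'$, which is still normal in $G'$, but there is no reason one can arrange $N\cap G'$ to lie inside the image $H$ of $\pi_1(Z)$ (equivalently, $H$ need not contain any subgroup of $N$ that is normal in $G$). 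So your Schur-on-a-normal-subgroup argument, as written, is applied to a subgroup that is only finite index in a normal subgroup, and two irreducible representations of $N$ can certainly become isomorphic on such an $H$ without being isomorphic on $N$.

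The paper sidesteps this by quoting \cite[Lemma~2.2.2]{landesmanL:canonical-representations}, which packages the needed Clifford-theoretic input: from irreducibility of the restrictions to $\pi_1(Z)$ and the isomorphism there, one deduces that the \emph{projectivizations} of $\phi_1,\phi_2$ agree on $\pi_1(\mathscr{M}_t)$, hence $\phi_1\simeq\phi_2\otimes\chi$. The rest of your argument (finite-order $\chi$, then Torelli) is exactly what the paper does. So your outline is right and you have correctly flagged the delicate point; what is missing is the actual mechanism that bridges $\pi_1(Z)$ to the normal subgroup $\pi_1(F)$, which is not ``pass to a finite cover of $\mathscr{M}_t$'' but rather the projective-representation lemma cited above.
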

\begin{proof}
	We can factor \eqref{equation:pair-monodromy-reps} as a composition
\begin{align}
	\label{equation:pair-monodromy-reps}
	\pi_1(Z,z)\to \pi_1(\mathscr M_t) \xrightarrow{\phi_i}
	\on{GL}(W_1H^1(\Sigma_{g,n}, \mathbb V^{\rho_i}))
\end{align}
and by \autoref{proposition:functoriality}, it suffices to show that $\phi_1$ is
isomorphic to $\phi_2$,
	possibly after passing to a cover of $\mathscr{M}_t$. By \cite[Lemma
	2.2.2]{landesmanL:canonical-representations} (using that our two
	representations of $\pi_1(Z, z)$ in question are irreducible by
\autoref{lemma:kodaira-individual-reps}), the projectivizations of $\phi_1$ and
$\phi_2$
are isomorphic. Hence, we may assume $\phi_1 \simeq \phi_2 \otimes \chi$, for
$\chi$ a
character. Observe that $\chi$ must be of finite order because its
connected monodromy group is simple 
(as also argued in the penultimate paragraph of the proof of
\autoref{theorem:main-thm-1}).
Hence, we may trivialize $\chi$ by passing to a cover of $\mathscr{M}_t$, and
therefore reduce to the case $\phi_1 \simeq \phi_2$, as desired.
\end{proof}

\begin{proof}[Proof of \autoref{theorem:kodaira-monondromy}]
	The proof follows from the Goursat-Kolchin-Ribet criterion of Katz
	\cite[Proposition 1.8.2]{katz-esde}, precisely following the argument in
	the proof of \autoref{theorem:main-thm-1}; we use \autoref{lemma:kodaira-individual-reps} in place of \autoref{theorem:main-thm-2} and \autoref{lemma:kodaira-separating reps} in place of \autoref{proposition:functoriality}.
	The final statement describing \eqref{equation:kodaira} follows from
	\autoref{lemma:symplectic-centralizer}.
\end{proof}

\section{Questions}
\label{section:questions}
We conclude with a number of open questions motivated by the preceding results.

\subsection{Improving the bounds in our results}
\autoref{theorem:main-thm-1} gives a large monodromy result for the mapping class group action on the homology of an $H$-cover of a genus $g$ curve, on the assumption that $g$ is large compared to the dimensions of the irreducible representations of $H$. However, we don't know a counterexample to the statement of \autoref{theorem:main-thm-1} as long as $g\geq 3$.
\begin{question}\label{question:big-image}
	Let $H$ be a finite group and let $\Sigma_{g',n'}\to \Sigma_{g,n}$ be an $H$-cover. Suppose that $g\geq 3$. Is the identity component of the Zariski closure of the virtual image $\on{Mod}_{g,n+1}$ in $\on{GL}(W_1H^1(\Sigma_{g'}))$ the commutator subgroup of $\on{Sp}(H^1(\Sigma_{g'}, \mathbb{C}))^H$?
\end{question}

It seems natural to conjecture that the answer is yes, strengthening the
Putman-Wieland conjecture of \cite{putmanW:abelian-quotients} as explained in
\autoref{remark:putman-wieland}.
Moreover one might expect that as soon as $g\geq 3$, the image is arithmetic:
\begin{question}\label{question:arithmetic-image}
	Is the virtual image of $\on{Mod}_{g,n+1}$ in $\on{Sp}(H^1(\Sigma_{g'}, \mathbb{Z}))^H$ finite index?
\end{question}
While our methods say nothing about \autoref{question:arithmetic-image}, strengthening our results on (generic) global generation would imply a positive answer to \autoref{question:big-image} in some cases, as we now explain.
\begin{question}\label{question:global-generation}
Let $g\geq 3$, and let $\rho: \pi_1(\Sigma_{g,n})\to \on{GL}_r(\mathbb{C})$ be a representation with finite image. Fix a very general complex structure $(C,D)$ on a pointed surface $(\Sigma_g, x_1, \cdots, x_n)$ and let $E_\star$ be the parabolic bundle corresponding to $\rho$. Is the vector bundle $\widehat E_0\otimes\omega_C(D)$
\begin{enumerate}
	\item[(a)] generically globally generated, or even
	\item[(b)] globally generated?
\end{enumerate}
\end{question}

\begin{remark}
	\label{remark:gg-positive-answer}
	When $n=0$, a positive answer to \autoref{question:global-generation}(b) for
arbitrary $r$ would imply a positive answer to \autoref{question:big-image} by the methods of this paper. Similarly, for arbitrary $n$, a positive answer to \autoref{question:global-generation}(a) would imply a positive answer to the Putman-Wieland conjecture \cite{putmanW:abelian-quotients}.
\end{remark}

\begin{remark}
	\label{remark:}
	One might naturally pose \autoref{question:global-generation} for arbitrary unitary representations $\rho$. We are grateful to Eric Larson 
and Isabel Vogt for explaining to us that in this case one must necessarily take
the meaning of ``very general'' in the question to \emph{depend} on $\rho$. That
is, every smooth proper curve $C$ of genus $g\geq 2$ admits a stable vector
bundle $E$ of degree zero (corresponding, by the Narasimhan-Seshadri
correspondence, to some unitary representation) such that $E\otimes \omega_C$ is
not generically globally generated. 

%We very briefly sketch this construction, omitting the details.
%Let $M$ be a general line bundle of degree $2g - 1$ on $C$ and $L$ a general line
%bundle of degree $2g - 3$. Take $Q$ to be a generic extension of $M$ by
%$\omega_C^{\oplus g}$ and take $E\otimes \omega_C$ to be a generic extension of
%$Q$ by $L$. In particular, $\rk E = g + 2$.
%If one takes $F := \ker (E\otimes \omega_C \to M)$, one may verify that $H^1(C, E \otimes
%\omega_C) = 0$ and $H^0(C, F) \to H^0(C, E \otimes \omega_C)$ is an isomorphism,
%so $E \otimes \omega_C$ is not generically globally generated.

%to have a filtration $L \subset F \subset E\otimes \omega_C$, with $\rk L = 1,
%\rk F = g+1$, $\rk E = g +2$,
%$E\otimes \omega_C/F$ is a general
%line bundle of degree $2g - 1$, $F/L \simeq \omega_C^{\oplus g}$, $L$ is a
%general line bundle of degree $2g - 3$, $E/L$ is constructed as a generic
%extension of $M$ by $\omega_C^{\oplus g}$, and $E$ is a generic extension of $E
%\otimes\omega_C$ by $L$.
\end{remark}
\begin{remark}
	\label{remark:}
	A positive answer to \autoref{question:global-generation} for general unitary representations would provide some evidence for the well-known conjecture that mapping class groups have Kazhdan's Property T in genus $g\geq 3$, as it would imply that certain unitary representations of mapping class groups are rigid, following the methods of \cite{landesmanL:canonical-representations}. See \cite[\S8]{ivanov2006fifteen} for a discussion of this question.

\end{remark}

\subsection{Arithmetic statistics}
As mentioned in \autoref{remark:arithmetic-statistics}, the results of
this paper are closely related to a number of results in arithmetic statistics,
which concern understanding the monodromy with $\mathbb Z/\ell \mathbb Z$
coefficients, instead of complex coefficients.
As noted in the introduction, \cite{jain:big-mod-ell-monodromy} is able to prove
a big monodromy result over genus $0$ bases with the number $n$ of punctures
having monodromy in every conjugacy class sufficiently large. However, his
result does not say how large $n$ has to be.
\begin{question}
	\label{question:}
	Is it possible to obtain a big monodromy result analogous to
	\autoref{theorem:main-thm-1} which is effective in $g$, or
	a result analogous to \autoref{conjecture:large-n}
	which is effective in $n$,
	with $\mathbb Z/\ell
	\mathbb Z$ coefficients instead of complex coefficients?
\end{question}
If the above were possible, can one use it to deduce some large $q$ limit versions of the Cohen Lenstra
heuristics, as alluded to in \autoref{remark:arithmetic-statistics}?

\subsection{Analogs for free groups}
Finally, it is natural to pose analogues of the questions here for representations of groups other than the mapping class group $\on{Mod}_{g,n}$. For example, the group $\on{Aut}(F_n)$ acts virtually on finite index subgroups of the free group $F_n$, and hence on their abelianizations.
\begin{question}
	Let $n\geq 3$ and fix a finite index subgroup $K\subset F_n$. Is the image in $\on{GL}(K^{\text{ab}})$ of the stabilizer of $K$ inside $\on{Aut}(F_n)$ arithmetic? What is its Zariski-closure?
\end{question}
This last question is studied in many interesting special cases by Grunewald-Lubotzky \cite{grunewald2009linear}.

\bibliographystyle{alpha}
\bibliography{bibliography-big-monodromy}

\end{document}